\documentclass[11pt]{article}

\usepackage[T1]{fontenc}
\usepackage[utf8]{inputenc}
\usepackage{newpxtext}      
\usepackage{iftex}          
\ifPDFTeX                   
  \usepackage[activate={true,nocompatibility},final]{microtype}
\else                       
  \usepackage[protrusion=true,final]{microtype}
\fi

\usepackage[margin=1in]{geometry}

\usepackage{authblk}                    

\usepackage{amsmath}
\usepackage{amssymb,amsfonts}
\usepackage{newpxmath}      
\usepackage{mathtools}      
\usepackage{bm}             
\usepackage{nicefrac}
\allowdisplaybreaks         

\usepackage{amsthm}

\usepackage{graphicx}
\graphicspath{{figures/}{./}}                       
\usepackage{booktabs}
\usepackage{longtable}                              
\usepackage{multirow,makecell,array}
\usepackage[font=small,labelfont=bf]{caption}       
\usepackage{subcaption}                             
\usepackage{algorithm}
\usepackage{algpseudocode}                          

\usepackage{enumitem}
\setlist[itemize]{leftmargin=2.2em,itemsep=2pt,topsep=2pt}
\setlist[enumerate]{leftmargin=2.2em,itemsep=2pt,topsep=2pt}

\usepackage{xcolor}
\definecolor{LinkColor}{rgb}{0.10,0.40,0.75}        
\definecolor{CiteColor}{rgb}{0.70,0.25,0.20}        
\definecolor{UrlColor} {rgb}{0.22,0.52,0.34}        
\definecolor{TodoColor}{rgb}{0.80,0.30,0.10}        
\usepackage{colortbl}                               
\definecolor{OurRow}{gray}{0.92}                    

\usepackage{tikz}
\usetikzlibrary{positioning,calc,arrows.meta,fit}

\usepackage[round,sort&compress]{natbib}

\usepackage{url}
\usepackage{hyperref}
\hypersetup{
  colorlinks=true,
  linkcolor=LinkColor,
  citecolor=CiteColor,
  urlcolor=UrlColor,
  breaklinks=true,
  bookmarksnumbered=true,
}
\usepackage{bookmark}                               

\numberwithin{equation}{section}

\newif\ifdraft \draftfalse
\ifdraft
  \usepackage{lineno}\linenumbers
  \newcommand{\todo}[1]{\textcolor{TodoColor}{\textbf{[TODO:}~#1\textbf{]}}}
\else
  \newcommand{\todo}[1]{}                            
\fi

\newcommand{\R}{\mathbb{R}}

\newcommand{\Q}{\mathbb{Q}}
\newcommand{\E}{\mathbb{E}}                       
\let\C\relax \newcommand{\C}{\mathbb{C}}          
\renewcommand{\P}{\mathbb{P}}                     

\DeclareMathOperator{\Var}{Var}

\DeclarePairedDelimiterX{\inner}[2]{\langle}{\rangle}{#1,#2}   

\newcommand{\LemmaForge}{\textsc{LemmaForge}}     
\newcommand{\Ckap}{\mathcal{C}(\kappa)}           
\newcommand{\CkapOf}[1]{\mathcal{C}(#1)}
\newcommand{\Vone}{V_{1}}                         
\newcommand{\Vtwo}{V_{2}}                         
\newcommand{\kc}{\kappa_{\mathrm{c}}}             
\newcommand{\kss}{\kappa^{**}}                    
\newcommand{\ks}{\kappa^{*}}                      
\newcommand{\PDeg}{\mathrm{PD}}                   

\newcommand{\tauofk}{\tau(\kappa)}

\usepackage{aliascnt}
\newcommand{\newshared}[2]{\newaliascnt{#1}{theorem}\newtheorem{#1}[#1]{#2}\aliascntresetthe{#1}}

\theoremstyle{plain}
\newtheorem{theorem}{Theorem}[section]
\newshared{proposition}{Proposition}
\newshared{lemma}{Lemma}
\newshared{corollary}{Corollary}
\newshared{conjecture}{Conjecture}
\newshared{fact}{Fact}
\newshared{mvfact}{Machine-Verified Fact}

\theoremstyle{definition}
\newshared{definition}{Definition}
\newshared{assumption}{Assumption}
\newshared{example}{Example}
\newshared{problem}{Problem}

\theoremstyle{remark}
\newshared{remark}{Remark}
\newshared{claim}{Claim}

\usepackage[capitalise,nameinlink,noabbrev,sort&compress]{cleveref}  

\crefname{theorem}{Theorem}{Theorems}          \Crefname{theorem}{Theorem}{Theorems}
\crefname{proposition}{Proposition}{Propositions}
\Crefname{proposition}{Proposition}{Propositions}
\crefname{lemma}{Lemma}{Lemmas}                \Crefname{lemma}{Lemma}{Lemmas}
\crefname{corollary}{Corollary}{Corollaries}   \Crefname{corollary}{Corollary}{Corollaries}
\crefname{conjecture}{Conjecture}{Conjectures} \Crefname{conjecture}{Conjecture}{Conjectures}
\crefname{fact}{Fact}{Facts}                   \Crefname{fact}{Fact}{Facts}
\crefname{mvfact}{Machine-Verified Fact}{Machine-Verified Facts}
\Crefname{mvfact}{Machine-Verified Fact}{Machine-Verified Facts}
\crefname{definition}{Definition}{Definitions} \Crefname{definition}{Definition}{Definitions}
\crefname{assumption}{Assumption}{Assumptions} \Crefname{assumption}{Assumption}{Assumptions}
\crefname{example}{Example}{Examples}          \Crefname{example}{Example}{Examples}
\crefname{problem}{Problem}{Problems}          \Crefname{problem}{Problem}{Problems}
\crefname{remark}{Remark}{Remarks}             \Crefname{remark}{Remark}{Remarks}
\crefname{claim}{Claim}{Claims}                \Crefname{claim}{Claim}{Claims}
\crefname{algorithm}{Algorithm}{Algorithms}    \Crefname{algorithm}{Algorithm}{Algorithms}

\newcommand{\restatetitle}{}
\newtheorem*{theoremrestated}{\restatetitle}
\newcommand{\RestateTheorem}[3]{%
  \renewcommand{\restatetitle}{Theorem~\ref{#1} (#2, restated)}%
  \begin{theoremrestated}#3\end{theoremrestated}}
\newcommand{\RestateProposition}[3]{%
  \renewcommand{\restatetitle}{Proposition~\ref{#1} (#2, restated)}%
  \begin{theoremrestated}#3\end{theoremrestated}}
\newcommand{\RestateCorollary}[3]{%
  \renewcommand{\restatetitle}{Corollary~\ref{#1} (#2, restated)}%
  \begin{theoremrestated}#3\end{theoremrestated}}
\newcommand{\RestateLemma}[3]{%
  \renewcommand{\restatetitle}{Lemma~\ref{#1} (#2, restated)}%
  \begin{theoremrestated}#3\end{theoremrestated}}

\title{The Exact Worst-Case Tail Probability under Bounded Kurtosis}

\author[1]{Xiaoyu Li\thanks{\texttt{xiaoyu.li2@unsw.edu.au}}}
\author[2]{Andi Han\thanks{\texttt{andi.han@sydney.edu.au}}}
\author[1]{Jiaojiao Jiang\thanks{\texttt{jiaojiao.jiang@unsw.edu.au}}}
\author[2]{Junbin Gao\thanks{\texttt{junbin.gao@sydney.edu.au}}}
\affil[1]{University of New South Wales\qquad\textsuperscript{2}University of Sydney}
\date{\today}

\hypersetup{pdftitle={The Exact Worst-Case Tail Probability under Bounded Kurtosis},
            pdfauthor={Xiaoyu Li, Andi Han, Jiaojiao Jiang, Junbin Gao}}

\begin{document}
\begingroup
\renewcommand\thefootnote{\fnsymbol{footnote}}
\maketitle
\endgroup

\begin{abstract}
We determine exactly what a kurtosis bound buys for one-sided tail control. For the class
$\Ckap$ of real random variables with mean $0$, variance $1$, and fourth moment at most
$\kappa$, the skewness left free (the form in which fourth-moment information typically
arrives in learning theory and robust statistics), we compute the worst-case tail
probability $\Vone(t,\kappa)=\sup_{X\in\Ckap}\P(X\geq t)$ for every threshold $t>0$ and
every $\kappa\geq 1$. The answer is a four-regime map: a \emph{Cantelli tongue}
$b(\kappa)\le t\le c(\kappa)$ on which the two-moment bound $1/(1+t^2)$ remains tight and
the kurtosis constraint is worthless; a tail regime $t\geq c(\kappa)$ with the closed form
$\Vone=(\kappa-1)/((t^2-1)^2+\kappa-1)$; a plateau regime, present only for
$\kappa\le 3/2$, on which the worst case freezes and the value does not depend on $t$; and
a central regime described exactly by an explicit algebraic system, provably admitting no
closed form in nested square roots. Beyond $c(\kappa)$ the one-sided and two-sided worst
cases \emph{coincide}: Cantelli's improvement over Chebyshev is annihilated by
fourth-moment information. The minimal degree of a sum-of-squares proof of the tight bound
is $2$ on the closed tongue and $4$ everywhere else, an exact phase diagram of proof
degree. Every closed-form regime carries an explicit dual certificate and an explicit
extremal distribution, re-verified on parameter grids by an independent checker in exact
arithmetic. The closed forms invert to exact worst-case quantiles, sharpen a
median-of-means constant, and give the exact per-direction tail available to degree-$4$
reasoning under certifiable kurtosis. We found the map through an AI-guided search around the certifying
pipeline, \LemmaForge, which is validated on classical benchmarks, independently
reproduces the symmetric-slice bound of \citet{zelen1954}, and recovers the
$2\sqrt3-3$ constant of \citet{hezhangzhang2010} at $t=0$. Code, certificates, and the
full instance tables are available at
\href{https://github.com/xiaoyulics/lemmaforge}{\nolinkurl{https://github.com/xiaoyulics/lemmaforge}}.

\end{abstract}

\newpage
{\hypersetup{linkcolor=black}%
\makeatletter
\renewcommand*\l@section[2]{%
  \ifnum \c@tocdepth >\z@
    \addpenalty\@secpenalty
    \addvspace{0.25em \@plus\p@}%
    \setlength\@tempdima{1.5em}%
    \begingroup
      \parindent \z@ \rightskip \@pnumwidth
      \parfillskip -\@pnumwidth
      \leavevmode \bfseries
      \advance\leftskip\@tempdima
      \hskip -\leftskip
      #1\nobreak\hfil\nobreak\hb@xt@\@pnumwidth{\hss #2}\par
    \endgroup
  \fi}
\makeatother
\tableofcontents}
\newpage

\section{Introduction}
\label{sec:intro}

Two moments buy the Chebyshev--Cantelli inequalities. If $X$ has mean $0$ and variance $1$,
then $\P(X\geq t)\leq 1/(1+t^2)$ for every $t>0$, this is tight, and the worst case is a
two-point distribution \citep{cantelli1928}. The natural next unit of information, and the
one that actually appears in the assumptions of modern learning theory and robust statistics,
is the fourth moment. Bounded-kurtosis classes underlie outlier-robust moment estimation
and the certifiable-subgaussianity program, where fourth-moment control at sum-of-squares
degree $4$ is a central currency of algorithmic estimators
\citep{kotharisteurer2017,hopkinsli2018}; the present paper works in one variable, where the
question can be answered \emph{exactly}: the base case of that program, not yet its
algorithmic payoff, and we say so plainly. We therefore consider the bounded-kurtosis class
\[
\Ckap \;=\; \bigl\{\,X \text{ real-valued}\;:\; \E X = 0,\; \E X^2 = 1,\; \E X^4 \leq \kappa \,\bigr\},
\qquad \kappa \geq 1,
\]
and we ask the sharp question:
\begin{center}
\emph{what exactly is the worst-case tail probability
$\Vone(t,\kappa) = \sup\limits_{X\in\Ckap}\,\P(X\geq t)$?}
\end{center}
Note what the class does \emph{not}
constrain: the third moment $m_3\coloneqq\E X^3$ is left free. This is deliberate, and it is the form in which the
assumption arrives in applications: a kurtosis bound is routinely available or imposed,
while the skewness is unknown.

The corresponding question with \emph{all} moments up to order four pinned is classical. The
reduced moment problem of Chebyshev, Markov, and Posse was made explicit for four moments by
\citet{royden1953} and \citet{zelen1954}, with the general theory of principal representations
in \citet{karlinstudden1966} and the classical inequality line surveyed by \citet{savage1961}
(\citealp{selberg1940} is the two-moment two-sided refinement; \citealp{guttman1948} an early
kurtosis inequality); the modern semidefinite-optimization treatment of
moment-constrained probability bounds is due to \citet{bertsimaspopescu2005}, whose explicit
closed forms stop at three moments; and \citet{hezhangzhang2010} solved a fourth-moment
one-sided problem in the small-deviation regime by conic duality. But these works either pin
the skewness (Zelen's bounds are multi-branch case analyses over all four moment values)
or cover one corner of the parameter space (\cref{tab:closest}). For the kurtosis-only
class $\Ckap$, to our
knowledge, no complete description of $\Vone$ has appeared. This paper closes the question,
and does so in a form where every constant is not merely stated but \emph{certified}: each
regime of the answer ships with an explicit polynomial certificate --- a sum-of-squares proof
a reader can check line by line, which an independent verifier additionally re-checks in exact
rational arithmetic --- together with the extremal distribution that attains it.

\begin{figure}[t]
\centering
\includegraphics[width=0.86\linewidth]{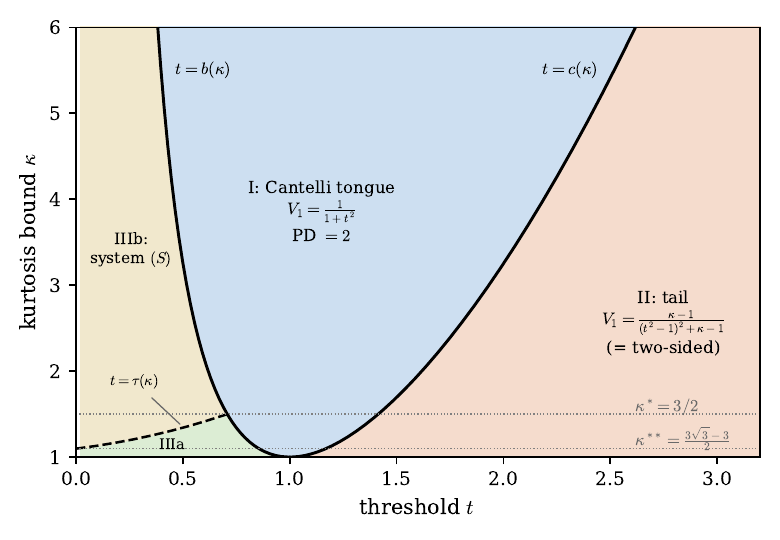}
\caption{The one-sided map (\cref{thm:main-map}). Inside the tongue
$b(\kappa)\le t\le c(\kappa)$ (equivalently $\kappa\ge\kc(t)$), Cantelli's two-moment bound
is tight and the proof degree is $2$. To the right of $t=c(\kappa)$ the tail formula of
\eqref{eq:regimeII} is tight, one-sided and two-sided worst cases coincide
(\cref{cor:collapse}), and the proof degree is $4$. Left of the tongue: the plateau IIIa
(value independent of $t$; only for $\kappa\le\ks=3/2$) and the central regime IIIb
(explicit algebraic system; \cref{sec:endpoint}). The dashed curve is $t=\tauofk$; the two
marked constants are $\kss=(3\sqrt3-3)/2$, where $\tau(\kss)=0$, and $\ks=3/2$, where
$\tau=b=1/\sqrt2$.}
\label{fig:phase}
\end{figure}

\begin{table}[t]
\centering\small
\renewcommand{\arraystretch}{1.2}%
\begin{tabular}{>{\raggedright\arraybackslash}p{0.24\dimexpr\linewidth-8\tabcolsep\relax}%
                >{\raggedright\arraybackslash}p{0.26\dimexpr\linewidth-8\tabcolsep\relax}%
                >{\raggedright\arraybackslash}p{0.32\dimexpr\linewidth-8\tabcolsep\relax}%
                >{\raggedright\arraybackslash}p{0.18\dimexpr\linewidth-8\tabcolsep\relax}}
\toprule
work & class & answer & at $(t,\kappa)=(2,3)$ \\
\midrule
\citeauthor{bienayme1853}~(\citeyear{bienayme1853})--\citeauthor{chebyshev1867}~(\citeyear{chebyshev1867})
  & $\E X{=}0$, $\E X^2{=}1$
  & $\P(|X|\ge t)\le\min(1,1/t^2)$; sharp & $1/4$ \\
\citet{cantelli1928}
  & $\E X{=}0$, $\E X^2{=}1$
  & $\P(X\ge t)\le 1/(1+t^2)$; sharp & $1/5$ \\
\mbox{Markov applied to $X^4$}
  & $\E X{=}0$, $\E X^2{=}1$, $\E X^4\le\kappa$
  & $\P(|X|\ge t)\le\kappa/t^4$; loose by the factor $\kappa/(\kappa-1)$ in the tail
  & $3/16$ \\
\citet{zelen1954}
  & all moments to order $4$ pinned ($m_3$ included)
  & sharp multi-branch closed forms where each branch applies & $2/19$ \\
\citet{hezhangzhang2010}
  & $\E X$, $\E X^2$, $\E X^4$ pinned ($m_3$ free)
  & sharp near the mean; the $t{=}0$ constant $2\sqrt3-3$ & --- \\
\rowcolor{OurRow}
this paper
  & $\E X{=}0$, $\E X^2{=}1$, $\E X^4\le\kappa$ ($m_3$ free)
  & $\Vone(t,\kappa)$ exactly, for every $(t,\kappa)$: the map of \cref{thm:main-map}
  & $\mathbf{2/11}$ \\
\bottomrule
\end{tabular}
\caption{The nearest classes at a glance; \cref{app:priorwork} gives the full landscape
with sources. The last column is each row's bound on $\P(X\ge2)$ at
$(t,\kappa)=(2,3)$, with Zelen evaluated on his $m_3=0$ branch. Values are not
comparable down the column: each is sharp (or not) for its own class. Zelen's $2/19$
undercuts our $2/11$ only because his class pins the skewness, and the gap is exactly
the price of not knowing it (\cref{app:priorwork}).}
\label{tab:closest}
\end{table}

\paragraph{The map.} The answer is a four-regime phase diagram over $(t,\kappa)$
(\cref{fig:phase}), with explicit boundary curves
\[
b(\kappa)=\tfrac{1}{2}\bigl(\sqrt{\kappa+3}-\sqrt{\kappa-1}\bigr),\qquad
c(\kappa)=\tfrac{1}{2}\bigl(\sqrt{\kappa+3}+\sqrt{\kappa-1}\bigr)=\frac{1}{b(\kappa)},\qquad
\kc(t)=t^2-1+\frac{1}{t^2},
\]
and two structural constants $\kss=\tfrac{3\sqrt3-3}{2}\approx 1.0981$ and $\ks=\tfrac32$.
\begin{itemize}
\item \textbf{Regime I (Cantelli tongue), $b(\kappa)\le t\le c(\kappa)$, equivalently
  $\kappa\ge\kc(t)$:} the kurtosis constraint is \emph{worthless};
  \[
  \Vone(t,\kappa)\;=\;\frac{1}{1+t^2},
  \]
  and the Cantelli two-point worst case survives because its own fourth moment, $\kc(t)$,
  happens to satisfy the constraint. The tongue closes at the single point $(t,\kappa)=(1,1)$.
\item \textbf{Regime II (tail), $t\ge c(\kappa)$:} the closed form
  \[
  \Vone(t,\kappa)\;=\;\frac{\kappa-1}{(t^2-1)^2+\kappa-1},
  \]
  attained by a skewed three-point distribution $\{t, a, -a\}$. As $t\to\infty$ this improves
  the trivial fourth-moment Markov bound $\kappa/t^4$ by exactly the factor
  $(\kappa-1)/\kappa$.
\item \textbf{Regime IIIa (plateau), $\kappa\le\ks$ and $\max(\tauofk,0)\le t\le b(\kappa)$ (with $t>0$):}
  \[
  \Vone(t,\kappa)\;=\;\frac12\biggl(1+\sqrt{\frac{\kappa-1}{\kappa+3}}\,\biggr),
  \]
  \emph{independent of $t$}: the worst case freezes at a two-point distribution that no
  longer needs an atom at the threshold.
\item \textbf{Regime IIIb (central), the remaining wedge:} the value is characterized exactly
  by an explicit algebraic system (system (S), \cref{sec:endpoint}); no closed form of the
  preceding kind exists: at $(t,\kappa)=(\tfrac12,2)$ its minimal polynomial over $\Q$
  already has degree six. At the endpoint $t=0$ (for $\kappa\ge\kss$) the value is
  $1-(2\sqrt3-3)/\kappa$, recovering the constant of \citet{hezhangzhang2010}.
\end{itemize}

\paragraph{Three consequences.} First, a collapse: on all of Regime II the one-sided and
two-sided worst cases coincide, $\Vone(t,\kappa)=\Vtwo(t,\kappa)$
(\cref{thm:twosided,cor:collapse}). At the two-moment level, one-sidedness buys the strict
Cantelli improvement $1/(1+t^2)$ versus $1/t^2$; once a kurtosis bound is added and
$t\ge c(\kappa)$, that gain is annihilated: the skewed three-point worst case for the
one-sided event is matched exactly by a symmetric four-point worst case for the two-sided
event. We are not aware of a prior observation of this phenomenon. Practically: any
analysis that pays for one-sided control (a Cantelli-type step) while also assuming a
kurtosis bound is, in the regime $t\ge c(\kappa)$, paying for nothing: the two-sided
bound already \emph{is} the one-sided truth, and symmetrization arguments lose nothing
there. Second, a proof-complexity
phase transition (\cref{thm:proofdegree}): the tight bound admits a degree-$2$ sum-of-squares
proof exactly on the Cantelli tongue, and requires (and admits) degree $4$ everywhere else;
the phase boundary of proof degree is the explicit algebraic curve $\kappa=\kc(t)$. We view
this as the simplest nontrivial instance of a general program, \emph{proof degree as a
computable coordinate of probability inequalities}, in the spirit of the
proofs-to-algorithms correspondence \citep{hopkinsli2018}. Third, everything is constructive
and machine-checked: the extremal distributions form a small gallery of two- and three-point
laws, the certificates are explicit polynomial identities with rational or quadratic-surd
data, and an independent verifier (exact arithmetic, no solver code) re-checks every
certified grid instance in a fresh process.

\paragraph{What exactness buys downstream.} Each closed form transfers
(\cref{sec:consequences}): inverted, the tail regime yields worst-case quantiles in
closed form (at $\kappa=3$ the one-sided $99\%$ quantile falls from Cantelli's
$9.95\sigma$ to $3.88\sigma$); composed with the fourth-moment identity for averages, it
sharpens the median-of-means per-block constant from Chebyshev's $1/4$ to $2/11$; read
on a classification margin, it gives an exact margin-to-error bound, quartic beyond
$c(\kappa)$; and under the certifiable-kurtosis hypothesis of sum-of-squares estimation,
$\Vone$ is the exact per-direction tail constant, unimprovable from that hypothesis
alone.

\paragraph{The instrument.} We found the results through an AI-guided search organized
around \LemmaForge, a pipeline that compiles moment problems into semidefinite programs via
the exact univariate sum-of-squares representations (Markov--Luk\'acs;
\citealp{nesterov2000}), solves them numerically, recognizes exact values from the floats
(PSLQ and sequence fitting), rounds numerical dual solutions to exact rational certificates in
the manner of \citet{peyrlparrilo2008}, and verifies every certificate with an independent
exact-arithmetic checker. The pipeline is validated against a benchmark battery whose answers
are classical: Markov, Cantelli, three-moment bounds on the half-line
\citep{bertsimaspopescu2005}, and the Khintchine $p=4$ values $3-2/n$, which the recognition
stage outputs autonomously from the numerics for $n=2,\dots,8$ \citep[cf.][]{haagerup1981}.
One further benchmark comes out sharper than the textbook: the pipeline computes the
\emph{tight} Paley--Zygmund constant $(1-\theta)^2/\bigl((1-\theta)^2+m_2-1\bigr)$ for the
class $\{X\ge 0,\ \E X=1,\ \E X^2=m_2\}$, strictly better than the textbook
$(1-\theta)^2/m_2$, an instructive reminder that classical inequalities are not always
stated in their extremal form. On the symmetric ($\E X^3=0$-pinned) slice, the pipeline
independently reproduces Zelen's 1954 bound wherever his formula applies, and locates its
regime boundary.

\paragraph{Related work.} Our Regime II formula is algebraically different from Zelen's
symmetric-case bound $(\kappa-1)/\bigl((1+t^2)(\kappa-1)+(t^2-1)^2\bigr)$
\citep{zelen1954,maher2019}, and must be: Zelen pins $\E X^3=0$ while we leave it free, and
our extremal three-point law is genuinely skewed. Neither bound is comparable to the other as
a theorem; quantitatively, at $(t,\kappa)=(2,3)$ the skewness-free worst case is $2/11$
against $2/19$ for the symmetric class: the exact price of not knowing the skewness.
\citet{bhattacharyya1987} proposed a one-sided four-moment inequality; \citet{maher2019}
showed it is not sharp; our map supplies the sharp values it should be compared against.
\citet{hezhangzhang2010} treat, by the same primal--dual method we use, the fourth-moment
one-sided problem in the small-deviation regime (skewness likewise free), with tight bounds
near the mean; our $t=0$ endpoint recovers their constant, and our map embeds their regime
as the small-$t$ edge of a global picture. In the same line, \citet{garnett2020} proves the
sharp $t=0$ bound $1-1/(2\kappa)$ under the additional sign constraint $\E X^3\ge 0$; the
gap against our free-skewness endpoint $1-(2\sqrt3-3)/\kappa$ is exactly the price of not
knowing the skewness sign at the mean. The rational-certificate toolchain builds
on \citet{peyrlparrilo2008} (see also \citealp{magron2018,cifuentes2020,harrison2007}); the moment--SOS
duality framework is that of \citet{bertsimaspopescu2005} and \citet{lasserre2010}; univariate
exactness of the degree-$4$ relaxation is the Markov--Luk\'acs theorem in the form of
\citet{nesterov2000}. Proof-degree lower bounds in the SOS literature concern combinatorial
problems (e.g., \citealp{grigoriev2001}); we are not aware of a prior exact proof-degree
phase diagram for a probability inequality.

The optimization view of Chebyshev-type inequalities also has a longer line of its own:
the duality theory of generalized moment problems goes back to \citet{isii1962};
\citet{smith1995} developed generalized Chebyshev inequalities for decision analysis;
\citet{popescu2005} and \citet{vandenberghe2007} compute sharp moment bounds by
semidefinite programming in richer, often multivariate settings; and moment ambiguity
sets are the backbone of distributionally robust optimization
\citep{delageye2010,wiesemann2014}. Our map can be read as the complete, closed-form
solution of the simplest genuinely fourth-moment ambiguity set. On the statistical side,
finite-kurtosis conditions of exactly our form are the standing assumption of
heavy-tailed mean estimation \citep{catoni2012,lugosimendelson2019}. For the
classical moment-problem monographs, see \citet{karlinstudden1966} and
\citet{kreinnudelman1977}; for exact rational certification in polynomial
optimization beyond \citet{peyrlparrilo2008}, see \citet{kaltofen2012}.

For the sum-of-squares method at large, see \citet{barak2014} and \citet{fleming2019};
within learning theory and statistics it has become a central algorithmic tool, whose
estimation applications typically run on certifiable moment hypotheses, fourth-moment
conditions chief among them. A partial list of the problems attacked this way: dictionary
learning and tensor decomposition
\citep{barakkelnersteurer2015,hopkinsshisteurer2015,mashisteurer2016}; robust moment
estimation, clustering, and mixture learning
\citep{kotharisteurer2017,hopkinsli2018,kss2018moments,bakshi2020clustering,bakshi2022mixtures};
outlier-robust regression \citep{klivanskotharimeka2018}; heavy-tailed mean estimation
at sub-Gaussian rates \citep{hopkins2020mean}; robust sparse mean estimation
\citep{dkkpp2022sparse}; certification of sparse singular values
\citep{dhpt2025sparse}; and the certifiability of subgaussianity itself
\citep{dhpt2025subgaussian}. See \citet{rss2018survey} for a survey of the method as an
estimation tool and \citet{diakonikolaskane2023} for the robust-statistics monograph;
on the flip side, SOS-degree lower bounds delimit what low-degree certificates can
detect or remove \citep{hkprss2017,li2026sos}. The map of this paper sits at the base of
this edifice: one variable and degree four, where the relaxation is exact and every
certificate is explicit.

\paragraph{Contributions.}
\begin{itemize}
  \item \textbf{(Complete map.)} \cref{thm:main-map}: $\Vone(t,\kappa)$ in closed form on
    three regimes and as an explicit algebraic system on the fourth, with extremal
    distributions and self-contained certificate proofs (\cref{app:proofs}).
  \item \textbf{(Collapse.)} \cref{thm:twosided,cor:collapse}: the exact two-sided map and
    the one-sided $=$ two-sided collapse on $t\ge c(\kappa)$.
  \item \textbf{(Proof-degree phase diagram.)} \cref{thm:proofdegree}: minimal SOS proof
    degree $2$ on the tongue, $4$ off it; the transition curve is exactly $\kappa=\kc(t)$.
  \item \textbf{(Certified instrument.)} \LemmaForge: a benchmark battery, exact certificates
    verified by an independent checker in fresh processes, an extremal-distribution gallery,
    and a machine-readable table of certified constants (\cref{sec:pipeline},
    \cref{app:pipeline}).
  \item \textbf{(Applications.)} \cref{sec:consequences}: exact worst-case quantiles
    and confidence intervals (closed-form fourth-moment value-at-risk), a sharpened
    median-of-means constant, an exact margin-to-error bound, and the exact directional
    tail constant available to degree-$4$ reasoning under certifiable kurtosis, with a
    matching witness.
\end{itemize}

\paragraph{Organization.} \cref{sec:prelim} sets up the moment problem, certificates, and
proof degree, and records the five-line weak-duality lemma all upper bounds rest on.
\cref{sec:results} states the map, the two-sided results, the proof-degree theorem, and the
endpoint proposition, each followed by its regime discussion. \cref{sec:pipeline} describes
the pipeline and the certification methodology. \cref{sec:consequences} records four
applications: exact worst-case quantiles and confidence intervals, a sharpened
median-of-means constant, an exact margin-to-error bound, and exact directional tails
under certifiable kurtosis.
\cref{sec:discussion} discusses limitations and the program this opens. The appendix opens with a symbol glossary and the dependency
graph of the paper's results (\cref{app:glossary}), then a self-contained primer on the
moment problem, sums of squares, and semidefinite programming (\cref{app:background}), and
the exhaustive register of every external and auxiliary fact used (\cref{app:register}).
Full proofs are in \cref{app:proofs}; \cref{app:worked} works three instances
of the map in explicit numbers; \cref{app:priorwork} tabulates the prior-work classes and
formulas; \cref{app:pipeline} documents the certificate format, the verifier's checks, a
verbatim certificate with its verification transcript, computational provenance, and
reproduction instructions; and \cref{app:tables} lists every certified instance.

\section{Setup: moment classes, certificates, and proof degree}
\label{sec:prelim}

\paragraph{The class and the two values.} Throughout, $X$ is a real-valued random variable.
We standardize the first two moments; the general case follows by the affine substitution
$X\mapsto (X-\mu)/\sigma$, under which $\kappa$ below is the kurtosis
$\E(X-\mu)^4/\sigma^4$ and the threshold is measured in standard deviations.

\begin{assumption}[The bounded-kurtosis class]\label{asm:class}
Fix $\kappa\geq 1$. The class $\Ckap$ consists of all laws of real random variables $X$ with
\textup{(A1)} $\E X = 0$, \textup{(A2)} $\E X^2 = 1$, and \textup{(A3)} $\E X^4 \leq \kappa$.
\end{assumption}

Each hypothesis is interpretable: (A1)--(A2) fix location and scale; (A3) is the only
substantive constraint, an upper bound on the fourth moment. By Jensen's inequality
$\E X^4\ge(\E X^2)^2=1$, so $\kappa\ge 1$ is the full range, and $\CkapOf{1}$ contains only
the symmetric two-point law on $\{\pm1\}$ (equality in Jensen forces $X^2$ a.s.\ constant
$=1$; mean zero then forces equal masses). The third moment is deliberately unconstrained.
For $t>0$ we study
\[
\Vone(t,\kappa)\;=\;\sup_{X\in\Ckap}\,\P(X\geq t),
\qquad
\Vtwo(t,\kappa)\;=\;\sup_{X\in\Ckap}\,\P(|X|\geq t).
\]
Both events are closed; atoms placed exactly at $t$ count toward the probability, and several
extremal distributions below exploit this.

\paragraph{Certificates.} Every upper bound we state is proved by a polynomial majorant of an
indicator, evaluated through the moment constraints: the mechanism behind Chebyshev's and
Cantelli's classical proofs, organized so that it can be checked mechanically. (Readers new
to the moment-problem/SOS/SDP dictionary will find a self-contained primer, run end to end
on Cantelli's inequality, in \cref{app:background}.)

\begin{definition}[SOS tail certificate]\label{def:cert}
Let $S=[t,\infty)$ (one-sided) or $S=\{|x|\ge t\}$ (two-sided). A \emph{certificate}
for the bound $\P(X\in S)\le B$ over $\Ckap$ is a polynomial in the moment monomials of
$\Ckap$,
\[
q(x)\;=\;\lambda_0 + \lambda_1 x + \lambda_2 x^2 + \lambda_4 x^4,
\qquad \lambda_4\ge 0,
\]
together with sum-of-squares data witnessing the two pointwise inequalities
\begin{align}
q(x) &\;\ge\; 0 \quad\text{for all } x\in\R, \label{eq:cert-glob}\\
q(x) &\;\ge\; 1 \quad\text{for all } x\in S, \label{eq:cert-event}
\end{align}
such that $B=\lambda_0+\lambda_2+\lambda_4\kappa$. We call the certificate
\emph{degree-$4$} when $\lambda_4>0$ and \emph{degree-$2$} when $\lambda_4=0$; these are
the only two kinds. Only \emph{constrained} moments carry
multipliers: the coefficient of $x^3$ is zero because $\E X^3$ is not constrained in $\Ckap$,
and $\lambda_4\geq 0$ because (A3) is an inequality in the direction $\le\kappa$.
For \eqref{eq:cert-event} on $[t,\infty)$ we use the exact Markov--Luk\'acs form
$q-1=\sigma_0+(x-t)\,\sigma_1$ with $\sigma_0,\sigma_1$ sums of squares; \eqref{eq:cert-glob}
is an explicit sum of squares. Such data make the two inequalities checkable by expanding
polynomial identities and verifying positive semidefiniteness of small Gram matrices;
no analysis is required.
\end{definition}

\begin{lemma}[Weak duality]\label{lem:weak-duality}
If $(q,B)$ is a certificate as in \cref{def:cert}, then $\P(X\in S)\le B$ for every
$X\in\Ckap$.
\end{lemma}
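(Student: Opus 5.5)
The plan is the standard majorant argument. The key observation is that $q\ge \mathbf{1}_S$ pointwise on all of $\R$, so integrating against the law of $X$ and using the moment constraints gives the bound.

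First, I will combine the two pointwise inequalities. For $x\in S$, \eqref{eq:cert-event} gives $q(x)\ge 1=\mathbf{1}_S(x)$. For $x\notin S$, \eqref{eq:cert-glob} gives $q(x)\ge 0=\mathbf{1}_S(x)$. Hence $q(x)\ge\mathbf{1}_S(x)$ for every real $x$. Both inequalities are genuinely valid here because the certificate data are sums-of-squares identities. In the one-sided case, $q-1=\sigma_0+(x-t)\sigma_1$ with $\sigma_0,\sigma_1\ge0$ and $x-t\ge0$ on $[t,\infty)$. In the two-sided case, the explicit SOS data witness the inequality directly.

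Second, I will take expectations. Fix $X\in\Ckap$. The fourth moment is finite by (A3), so $\E|X|^k<\infty$ for $k\le4$, and $\E q(X)$ is well defined by linearity. Monotonicity of expectation then gives $\P(X\in S)=\E\mathbf{1}_S(X)\le\E q(X)$.

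Third, I will evaluate $\E q(X)$:
\[
\E q(X)=\lambda_0+\lambda_1\E X+\lambda_2\E X^2+\lambda_4\E X^4=\lambda_0+\lambda_2+\lambda_4\E X^4 .
\]
This uses (A1) and (A2). Because $\lambda_4\ge0$ and $\E X^4\le\kappa$ by (A3), we get $\lambda_4\E X^4\le\lambda_4\kappa$. Therefore $\E q(X)\le\lambda_0+\lambda_2+\lambda_4\kappa=B$.

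The only subtle points are integrability and the sign condition on $\lambda_4$. The sign condition is exactly where the inequality form of (A3) enters, and it explains why the cubic coefficient must vanish: $\E X^3$ is unconstrained, so a nonzero cubic term could not be evaluated. Neither point is a real obstacle.
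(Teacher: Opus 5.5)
Your proof is correct and follows the same route as the paper. You combine the two certificate inequalities into the pointwise majorant $\mathbf{1}_S\le q$, take expectations (all moments through degree $4$ are finite by (A3)), and then evaluate $\E q(X)$ using (A1)--(A2), with (A3) and $\lambda_4\ge 0$ giving the final bound $B$.
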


\begin{proof}
By \eqref{eq:cert-glob}--\eqref{eq:cert-event}, $\mathbf{1}_S(x)\le q(x)$ pointwise on $\R$.
Taking expectations under any $X\in\Ckap$ (all moments through degree $4$ exist by (A3)),
\[
\P(X\in S)\;\le\;\E q(X)
\;=\;\lambda_0+\lambda_1\,\E X+\lambda_2\,\E X^2+\lambda_4\,\E X^4
\;\le\;\lambda_0+\lambda_2+\lambda_4\kappa\;=\;B,
\]
using (A1), (A2), and, for the last inequality, (A3) together with $\lambda_4\ge0$.
\end{proof}

A matching \emph{lower} bound is always exhibited by a distribution: a discrete law
$\mu=\sum_i w_i\delta_{x_i}\in\Ckap$ with $\mu(S)=B$. When both are present, $\Vone=B$
exactly; every tightness claim in this paper is of this matched-pair form, and both halves are
finite objects that a referee (or a program) can verify by rational arithmetic.

\begin{definition}[Proof degree]\label{def:pd}
For $(t,\kappa)$ with $\Vone(t,\kappa)=B$, the \emph{proof degree} $\PDeg(t,\kappa)$ is
the least degree ($2$ or $4$, by \cref{def:cert}) of a certificate for
$\P(X\ge t)\le B$ over $\Ckap$. Degree-$2$ certificates ($\lambda_4=0$) are exactly the
classical Cantelli-type proofs, blind to (A3).
\end{definition}

\begin{remark}[Why degree $4$ always suffices]\label{rem:exactness}
In one variable there is no relaxation gap: a univariate polynomial nonnegative on $\R$ is a
sum of two squares, and one nonnegative on $[t,\infty)$ of degree $\le 4$ has an exact
representation $\sigma_0+(x-t)\sigma_1$ with $\deg\sigma_0\le4$, $\deg\sigma_1\le2$
(Markov--Luk\'acs; see \citealp{nesterov2000}). Consequently the degree-$4$ moment
relaxation of $\Vone$ is exact, and the certificates of \cref{sec:results} realize the
optimum. This exactness is what makes a \emph{complete} map possible and distinguishes the
univariate landscape from the multivariate one, where hierarchies converge under an
Archimedean condition but may need unbounded degree
\citep{putinar1993,nie2007putinar,nie2014}.
\end{remark}

\paragraph{Extremal supports.} Classical Tchebycheff-system theory
\citep{karlinstudden1966} says worst-case measures for problems with four moment constraints
concentrate on few points; we will exhibit two- and three-point extremal laws throughout and
never need the general theory: each regime's proof is a self-contained matched pair.

\paragraph{Notation.} $b(\kappa)=\tfrac12(\sqrt{\kappa+3}-\sqrt{\kappa-1})$ and
$c(\kappa)=\tfrac12(\sqrt{\kappa+3}+\sqrt{\kappa-1})$, so that $b(\kappa)c(\kappa)=1$,
$b\le 1\le c$, and $b^2+c^2=\kappa+1$; $\kc(t)=t^2-1+1/t^2$ (the fourth moment of the
Cantelli extremal law at threshold $t$); $\kss=(3\sqrt3-3)/2$; $\ks=3/2$; and, for
$1\le\kappa\le\ks$, with $u=\sqrt{\kappa-1}$, $s=\sqrt{\kappa+3}$,
\[
\tauofk\;=\;\sqrt{u\,(s+u)}\;-\;b(\kappa).
\]
The curve $\kappa=\kc(t)$ and the curves $t=b(\kappa)$, $t=c(\kappa)$ are the same set: for
$\kappa\geq 1$, $\kappa\ge\kc(t)\iff b(\kappa)\le t\le c(\kappa)$
(\cref{lem:tongue-equiv}).

\section{Main results}
\label{sec:results}

This section states the results. \cref{sec:map} gives the one-sided map
(\cref{thm:main-map}) and reads off its regime structure; \cref{sec:twosided} gives the
two-sided map and the one-/two-sided collapse; \cref{sec:pd} the proof-degree phase
transition; and \cref{sec:endpoint} the central regime, its $t=0$ endpoint, and the
impossibility of a nested-square-root closed form. Throughout, every upper bound is a
certificate in the sense of \cref{def:cert} and every matching lower bound an explicit
two- or three-point law; the paired proofs are deferred to \cref{app:proofs}.

\subsection{The one-sided map}
\label{sec:map}

\newcommand{\ThmMainMapBody}[1][]{%
Let $\kappa\ge1$, $t>0$, and adopt \cref{asm:class} and the notation of \cref{sec:prelim}.
\begin{enumerate}
\item[\textup{(I)}] \textbf{\textup{(Cantelli tongue.)}} If $b(\kappa)\le t\le c(\kappa)$,
equivalently $\kappa\ge\kc(t)$, then
\[
\Vone(t,\kappa)\;=\;\frac{1}{1+t^2},
\]
attained by $\mu^\star=\frac{1}{1+t^2}\,\delta_t+\frac{t^2}{1+t^2}\,\delta_{-1/t}$, whose
fourth moment is exactly $\kc(t)$.
\item[\textup{(II)}] \textbf{\textup{(Tail regime.)}} If $t\ge c(\kappa)$ and
$(t,\kappa)\neq(1,1)$, then
\begin{equation}#1
\Vone(t,\kappa)\;=\;p\;:=\;\frac{\kappa-1}{(t^2-1)^2+\kappa-1},
\end{equation}
attained by the three-point law $\mu^\star=p\,\delta_t+w_+\delta_a+w_-\delta_{-a}$ with
\[
a^2=u:=\frac{1-p\,t^2}{1-p},
\qquad
w_\pm=\frac{(1-p)\mp p\,t/a}{2}\;\ge 0 .
\]
\item[\textup{(IIIa)}] \textbf{\textup{(Plateau.)}} If $1<\kappa\le\ks=\tfrac32$ and
$\max(\tauofk,0)\le t\le b(\kappa)$ (with $t>0$), then, independently of $t$,
\[
\Vone(t,\kappa)\;=\;\frac12\biggl(1+\sqrt{\frac{\kappa-1}{\kappa+3}}\,\biggr)
\;=\;\frac{c(\kappa)}{\sqrt{\kappa+3}},
\]
attained by the two-point law
$\mu^\star=\frac{c}{\sqrt{\kappa+3}}\,\delta_{b}+\frac{b}{\sqrt{\kappa+3}}\,\delta_{-c}$,
which does not depend on $t$. The plateau window is nonempty exactly for $\kappa\le\ks$, and
$\tauofk\le0$ exactly for $\kappa\le\kss$, so for $\kappa\le\kss$ the plateau covers the whole
range $0<t\le b(\kappa)$.
\item[\textup{(IIIb)}] \textbf{\textup{(Central regime.)}} For the remaining parameters
($0<t<b(\kappa)$ with $\kappa>\ks$, or $0<t<\tauofk$ with $\kss<\kappa\le\ks$), the value
$\Vone(t,\kappa)=w_1+w_2$ at any solution of the explicit system \textup{(S)} of
\cref{sec:endpoint}, describing a three-point extremal law $\{-c,\,t,\,b\}$ and a matching
degree-$4$ certificate; solvability holds at every parameter point we evaluated
\textup{(}\cref{sec:pipeline}\textup{)}.
\end{enumerate}
The values glue continuously across all regime boundaries, and for every $(t,\kappa)$ the
upper bound is witnessed by an explicit certificate in the sense of \cref{def:cert}.
}
\begin{theorem}[The exact tail map of the bounded-kurtosis class]\label{thm:main-map}\ThmMainMapBody[\label{eq:regimeII}]\end{theorem}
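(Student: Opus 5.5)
The proof is regime by regime, and each regime gets a matched pair in the sense of \cref{sec:prelim}. For the lower bound I exhibit the stated law and check that it lies in $\Ckap$ and puts the claimed mass on $[t,\infty)$. For the upper bound I give a certificate $q(x)=\lambda_0+\lambda_1x+\lambda_2x^2+\lambda_4x^4$ as in \cref{def:cert}, with value $B=\lambda_0+\lambda_2+\lambda_4\kappa$, and conclude by \cref{lem:weak-duality}. Complementary slackness tells me how to build each $q$. It must touch $0$ at the extremal atoms below $t$ and equal $1$ at any atom in $[t,\infty)$, and these contacts must be tangential wherever the touching point is interior to its region. Also $\lambda_4>0$ exactly when (A3) binds.

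\textbf{Regime I.} I check directly that $\mu^\star$ has mean $0$, variance $1$ and fourth moment $t^2-1+1/t^2=\kc(t)$. The equivalence $\kappa\ge\kc(t)\iff b\le t\le c$ is \cref{lem:tongue-equiv}. For the certificate I take the classical Cantelli majorant $q(x)=(x+1/t)^2/(t+1/t)^2$ with $\lambda_4=0$. It is a square, and $q-1$ is nonnegative on $[t,\infty)$ because it factors as $(x-t)$ times a positive linear term there. Its value is $1/(1+t^2)$.

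\textbf{Regime II.} The moment conditions for the three-point law force $w_++w_-=1-p$, $(w_+-w_-)a=-pt$ and $pt^2+(1-p)a^2=1$; the stated $u$ and $w_\pm$ solve these. The fourth moment is $pt^4+(1-p)u^2$, and substituting $u$ and simplifying should return exactly $\kappa$. To see that $w_\pm\ge0$ I need $p^2t^2\le(1-p)^2u=(1-p)(1-pt^2)$, and I expect this to reduce to $t\ge c(\kappa)$. For the certificate I use the ansatz $q(x)=\lambda_4(x^2-a^2)^2+\lambda_1x+\lambda_4\cdot 0$, corrected so that $q\ge0$ touches zero at $\pm a$. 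Since $q$ must vanish doubly at both $a$ and $-a$, the choice $q=\lambda(x^2-u)^2$ with $\lambda_1=0$ is natural, with $\lambda$ fixed by $q(t)=1$, that is $\lambda=1/(t^2-u)^2$. Then $q-1\ge0$ on $[t,\infty)$ holds because $x^2-u$ is increasing there (this needs $t>0$ and $t^2>u$). The value $\lambda(u^2-2u+\kappa)$ should simplify to $p$, and confirming that identity is the only real computation. It is also the reason the value matches the symmetric two-sided bound.

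\textbf{Regime IIIa.} The two-point law on $\{b,-c\}$ has mean $0$ and variance $bc=1$, and its fourth moment is $b^2c^2(b^2+c^2)/(b+c)\cdot(b+c)/(bc)\cdots=b^2-bc+c^2=\kappa$, using $b^2+c^2=\kappa+1$. The mass at $b\ge t$ is $c/(b+c)=c/\sqrt{\kappa+3}$. For the certificate, $q$ must satisfy $q(-c)=q'(-c)=0$ and $q(b)=1$ with $q-1\ge0$ on $[t,\infty)$. Because $b$ may lie strictly above $t$, I also need $q'(b)=0$ whenever $b>t$. With $\lambda_3=0$ that gives four linear conditions on four coefficients, so $q$ is determined. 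The real task is to show $q-1\ge0$ on $[t,b]$, which is a local-minimum-at-$b$ condition, and the constraint $t\ge\tauofk$ should be exactly where the other root of $q-1$ crosses $t$. I expect this verification to be the main obstacle. I would compute $q-1=\lambda_4(x-b)^2(x^2+\alpha x+\beta)$ explicitly and identify the largest real root of the quadratic with $\tauofk$, which would also yield the thresholds $\kss$ and $\ks$.

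\textbf{Regime IIIb and gluing.} For IIIb I simply cite system (S) of \cref{sec:endpoint}: any solution gives the three-point law and a certificate by the same complementary-slackness template, and the value follows by weak duality. Continuity across the boundaries I check by evaluation. At $t=c$, $p=1/(1+t^2)$ because $(t^2-1)^2=(\kappa-1)t^2$ on that curve. At $t=b$ the Cantelli law coincides with the plateau law. At $t=\tauofk$ system (S) degenerates to the plateau solution.
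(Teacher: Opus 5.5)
Your architecture is the paper's: a matched pair per regime, with the same Cantelli, $(x^2-u)^2/(t^2-u)^2$ and $(x+c)^2$-based certificates, the same witnesses, and system (S) cited for IIIb. Regimes I and II go through.

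In Regime II your monotonicity argument is a shorter route to $q\ge1$ on $[t,\infty)$ than the paper's: $x\ge t>0$ and $u<t^2$ give $x^2-u\ge t^2-u>0$. The paper instead expands in $y=x-t$, getting $y(y+2t)\bigl(y^2+2ty+2(t^2-u)\bigr)$ with nonnegative coefficients. That expansion is what supplies the explicit Markov--Luk\'acs data required by \cref{def:cert}. Two facts you use there are unchecked but immediate:
\begin{itemize}
\item The weight condition reduces to $1-p(1+t^2)\ge0$, i.e.\ $\kappa\le\kc(t)$. This is implied by $t\ge c(\kappa)$ via \cref{lem:tongue-equiv}, but not equivalent to it. It also gives $pt^2<1$, hence $u>0$.
\item $t^2-u=(t^2-1)/(1-p)>0$ because $t>1$ on this regime. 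This is also what puts $\pm a$ below $t$.
\end{itemize}
Moreover, the value identity you call ``the only real computation'' comes for free. Indeed $q$ vanishes at $\pm a$, equals $1$ at $t$, and the witness has $\E X^4=\kappa$, so $\lambda_0+\lambda_2+\lambda_4\kappa=\E_{\mu^\star}q=p$. The same argument gives $c/s$ in IIIa.

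The genuine gap is Regime IIIa, where you stop exactly where the proof's content begins. Write $u=\sqrt{\kappa-1}$, $s=\sqrt{\kappa+3}$ here. Your interpolated $q$ is never shown to be a certificate: neither $\lambda_4\ge0$ nor $q\ge0$ on $\R$ is addressed. Also, $q\ge1$ is needed on all of $[t,\infty)$, not just on $[t,b]$. Both follow from forced factorizations.
\begin{itemize}
\item The double zero at $-c$ and the absent cubic term force $q=\lambda_4(x^2-c^2)^2+\gamma(x+c)^2$. Then $q'(b)=0$ gives $\gamma=2\lambda_4 b(c-b)=\lambda_4k_2$. Next, $q(b)=1$ gives $\lambda_4=\bigl((c-b)(b+c)^3\bigr)^{-1}=(us^3)^{-1}>0$, so $q\ge0$ is manifest.
\item The double root of $q-1$ at $b$, again with no cubic term, forces $q-1=\lambda_4(x-b)^2(x^2+2bx+r)$. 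Matching $x^2$-coefficients gives $r=3b^2+k_2-2c^2=(s^2-6su-3u^2)/4$. Hence $b^2-r=u(s+u)$, the larger root of the quadratic is exactly $\tauofk$, and $q\ge1$ on $[t,\infty)$ whenever $t\ge\tauofk$.
\item The window claims, which you only anticipate, are then $\tauofk\le b\iff u(s+u)\le(s-u)^2\iff 3u\le s\iff\kappa\le\tfrac32$, and $\tauofk\le0\iff r\ge0\iff 2u^4+10u^2-1\le0\iff\kappa\le\kss$.
\end{itemize}

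Finally, ``(S) degenerates to the plateau solution at $t=\tauofk$'' is only a consistency check at the boundary point. It does not show that the IIIb value converges as $t\uparrow\tauofk$ (or as $t\uparrow b$ when $\kappa>\tfrac32$). The paper verifies that convergence only on grids. It does hold in general, because $t\mapsto\Vone(t,\kappa)$ is left-continuous. To see this, take maximizers at $t_n\uparrow t$, extract a weak limit in $\Ckap$ as in \cref{lem:attain}, and apply Portmanteau to the closed sets $[t-\varepsilon,\infty)$.
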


\begin{proof}[Proof sketch]
Each regime is a matched pair: a certificate $q$ (\cref{lem:weak-duality}) and a distribution
attaining its value. The certificates are, respectively,
\[
\text{(I)}\;\; q=\frac{(x+1/t)^2}{(t+1/t)^2},
\qquad
\text{(II)}\;\; q=\frac{(x^2-u)^2}{(t^2-u)^2},
\qquad
\text{(IIIa)}\;\; q=\lambda_4\bigl[(x^2-c^2)^2+k_2(x+c)^2\bigr]
\]
with $k_2=2b(c-b)$ and $\lambda_4^{-1}=(c-b)(b+c)^3$; Regime (IIIb) uses the same shape as
(IIIa) with parameters determined by (S). What varies is which pointwise inequalities are
active where, and the entire content of the proof is the verification, by explicit
factorization, that $q\ge\mathbf{1}_{[t,\infty)}$ holds exactly on the stated parameter
region, plus rational moment arithmetic for the witness. Full proofs: \cref{app:proofs};
fully worked numerical instances of each closed-form regime: \cref{app:worked};
the equality-constrained class $\E X^4=\kappa$ has the same map (\cref{prop:eqclass}).
\end{proof}

Two remarks on how the statement is organized. First, the Regime I
\emph{upper} bound $1/(1+t^2)$ holds for every $(t,\kappa)$: its certificate never uses
\textup{(A3)}; the tongue hypothesis $\kappa\ge\kc(t)$ is exactly the condition for the bound
to be \emph{attained} (the Cantelli witness must satisfy the fourth-moment budget), and off
the tongue the value is strictly smaller (\cref{lem:rigidity}). Second, the closed regimes
overlap along the boundary curves $t=b(\kappa)$, $t=c(\kappa)$, $t=\tauofk$; the map is
nevertheless single-valued because the adjacent formulas agree there (gluing identities,
verified symbolically), so boundary points may be read from either side.

\paragraph{Reading the map.} Three features deserve comment.
\begin{itemize}
\item \emph{The tongue is where kurtosis is worthless.} The two-moment worst case (Cantelli's
two-point law) has fourth moment $\kc(t)=t^2-1+1/t^2$; whenever the budget $\kappa$ covers
this, no fourth-moment information can help, and the map says this is the \emph{only}
obstruction. The tongue equivalence $\kappa\ge\kc(t)\iff b(\kappa)\le t\le c(\kappa)$ is the
factorization $t^4-(\kappa+1)t^2+1=(t^2-b^2)(t^2-c^2)$. A pointed instance: $t=1$ lies
in the tongue for \emph{every} $\kappa$ (as $\kc(1)=1$), so the classical median--mean
bound $|m-\mu|\le\sigma$ (Cantelli at $t=1$) cannot be improved by any kurtosis
assumption; the witness there is the symmetric law on $\{\pm1\}$, whose fourth moment
is the \emph{minimal} value $1$.
\item \emph{Regime II quantifies the tail gain.} For $t\ge c(\kappa)$,
\eqref{eq:regimeII} decays like $(\kappa-1)/t^4$: fourth-moment information converts the
$1/t^2$ Cantelli decay into $1/t^4$ decay, and the constant $\kappa-1$ --- the variance of
$X^2$, not the raw fourth moment $\kappa$ --- is optimal; the trivial Markov bound
$\E X^4/t^4\le\kappa/t^4$ is off by exactly the factor $\kappa/(\kappa-1)$ (and by
infinitely much at $\kappa=1$). \cref{fig:comparison} draws the comparison at $\kappa=3$.
\item \emph{The plateau is a freezing phenomenon.} For small $\kappa$ and small $t$, the
optimal adversary ignores the threshold entirely: the same two-point law
$\{b(\kappa),-c(\kappa)\}$ is worst for every $t$ in the window, and its tail mass
$c/\sqrt{\kappa+3}$ is the value. The window degenerates at two computable constants: at
$\kappa=\ks=3/2$ its endpoints collide at $\tau=b=1/\sqrt2$, and at $\kappa=\kss$ the left
endpoint hits $t=0$. Both constants seem to be new.
\end{itemize}

\begin{figure}[t]
\centering
\includegraphics[width=0.8\linewidth]{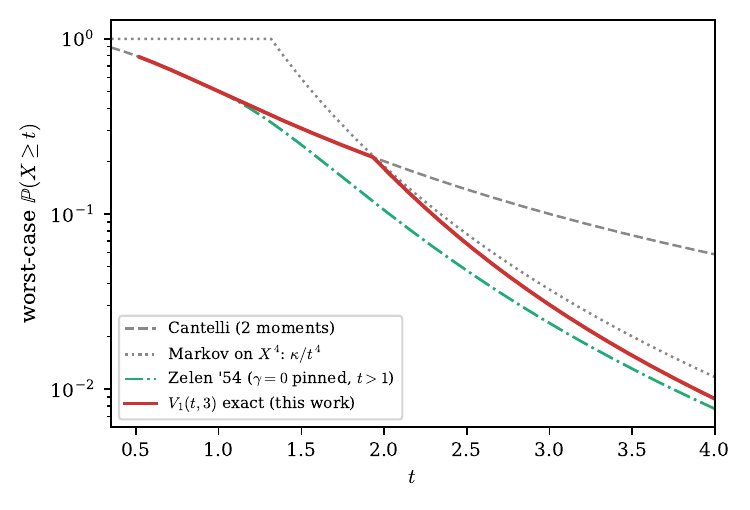}
\caption{The exact map against the classical bounds at $\kappa=3$ (log scale). The gap to
Cantelli is the value of kurtosis information; the gap to the fourth-moment Markov bound
$\kappa/t^4$ is the factor $\kappa/(\kappa-1)$; the gap to Zelen's symmetric bound (valid
for the smaller, $\gamma$-pinned class, $t>1$) is the exact price of not knowing the
skewness.}
\label{fig:comparison}
\end{figure}

\paragraph{Boundary cases.} At $\kappa=1$, $\CkapOf{1}=\{\text{uniform on }\{\pm1\}\}$ and
$\Vone=\tfrac12$ for $t\le1$, $0$ for $t>1$; the formulas' limits agree. The corner
$(t,\kappa)=(1,1)$, where \eqref{eq:regimeII} reads $0/0$, belongs to Regime I (the tongue
degenerates to $\{1\}$ there), with value $\tfrac12$. Monotonicity --- $\Vone$ nonincreasing
in $t$, nondecreasing in $\kappa$ --- holds throughout with kinks, never jumps, across the
regime boundaries.

\subsection{The two-sided map and the collapse}
\label{sec:twosided}

\newcommand{\ThmTwoSidedBody}{%
Under \cref{asm:class}, for $t\ge1$:
\[
\Vtwo(t,\kappa)\;=\;
\begin{cases}
\dfrac{1}{t^2}, & \kappa\ge t^2 \quad(\text{Chebyshev regime; extremal }\{0,\pm t\}),\\[2ex]
\dfrac{\kappa-1}{(t^2-1)^2+\kappa-1}, & 1\le\kappa\le t^2\quad(\text{extremal }\{\pm t,\pm a\}),
\end{cases}
\]
and $\Vtwo(t,\kappa)=1$ for $0<t\le1$ (extremal uniform on $\{\pm1\}$). In the second case
the certificate is the \emph{same} even polynomial $q=(x^2-u)^2/(t^2-u)^2$ as in Regime II of
\cref{thm:main-map}.
}
\begin{theorem}[Two-sided map]\label{thm:twosided}\ThmTwoSidedBody\end{theorem}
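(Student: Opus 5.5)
Each case is a matched pair: a certificate via \cref{lem:weak-duality} with $S=\{|x|\ge t\}$, and a discrete law in $\Ckap$ attaining the bound.

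\textbf{The case $0<t\le1$.} The upper bound $1$ is trivial. It is attained by the uniform law on $\{\pm1\}$: this law has fourth moment $1\le\kappa$, so it lies in $\Ckap$, and it puts all of its mass on $|x|\ge1\ge t$.

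\textbf{Chebyshev regime, $\kappa\ge t^2\ge1$.} The upper bound uses the degree-$2$ certificate $q=x^2/t^2$. It is nonnegative on $\R$, and $q\ge1$ on $|x|\ge t$. Its value is $B=\lambda_2=1/t^2$. For the matching law take $\frac{1}{2t^2}(\delta_t+\delta_{-t})+(1-\frac1{t^2})\delta_0$. Its mean is $0$ and its variance is $1$. Its fourth moment is $t^2\le\kappa$, so it lies in $\Ckap$, and its two-sided tail mass is $1/t^2$.

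\textbf{Second case, $1\le\kappa\le t^2$: upper bound.} Write $p=(\kappa-1)/((t^2-1)^2+\kappa-1)$ and $u=(1-pt^2)/(1-p)$, as in Regime II of \cref{thm:main-map}. The certificate is $q=(x^2-u)^2/(t^2-u)^2$. It is a square, so \eqref{eq:cert-glob} holds. For the event inequality, first check $0\le u\le1\le t^2$. Then, for $|x|\ge t$, we have $x^2-u\ge t^2-u>0$, so $q\ge1$.

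Computing the value: expanding $q$ gives $\lambda_2=-2u/(t^2-u)^2$, $\lambda_4=1/(t^2-u)^2$ and $\lambda_0=u^2/(t^2-u)^2$. Hence $B=(u^2-2u+\kappa)/(t^2-u)^2=((1-u)^2+\kappa-1)/(t^2-u)^2$. It remains to verify $B=p$. A cleaner route is to note that $u$ is the minimizer of $((1-u)^2+\kappa-1)/(t^2-u)^2$ over $u<t^2$. The first-order condition is $(u-1)(t^2-u)+(1-u)^2+\kappa-1=0$, which solves to $u=1-(\kappa-1)/(t^2-1)$. Substituting gives $B=(\kappa-1)/((t^2-1)^2+\kappa-1)$ after routine algebra. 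The same computation should confirm agreement with the definition of $u$ through $p$, since $p(t^2-1)^2=(1-p)(\kappa-1)$. This is also where $0\le u$ is used: it is equivalent to $\kappa-1\le t^2-1$, that is, $\kappa\le t^2$.

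\textbf{Second case: matching law.} Take the symmetric four-point law $\frac p2(\delta_t+\delta_{-t})+\frac{1-p}2(\delta_a+\delta_{-a})$ with $a=\sqrt u$. The mean vanishes by symmetry. The second moment is $pt^2+(1-p)u=1$ by the definition of $u$. The fourth moment is $pt^4+(1-p)u^2$, and this should equal $\kappa$. To check it, use $u=1-(\kappa-1)/(t^2-1)$ and $1-p=(t^2-1)^2/((t^2-1)^2+\kappa-1)$. Then $(1-p)(1-u)=p(t^2-1)$, which is the second-moment identity, and a similar computation gives the fourth moment. The two-sided mass is $p$, since $a^2=u\le1\le t^2$ and the case $a=t$ occurs only at the corner.

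\textbf{Main obstacle and remaining checks.} The main obstacle is this rational bookkeeping, together with the boundary checks: $u\in[0,1]$ exactly when $1\le\kappa\le t^2$, and at $\kappa=t^2$ we get $u=0$ and the law becomes $\{0,\pm t\}$. That degeneration gives continuity with the Chebyshev branch, whose value there is also $1/t^2$, and it confirms that the two regimes glue. Finally, the certificate is literally the Regime II polynomial of \cref{thm:main-map}, as the statement claims.
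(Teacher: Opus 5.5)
Your proof is correct and is essentially the paper's. In the Chebyshev regime it uses the same degree-$2$ certificate $x^2/t^2$ with the $\{0,\pm t\}$ witness, and in the binding regime the same even Regime~II certificate with the symmetric four-point witness $\tfrac p2(\delta_t+\delta_{-t})+\tfrac{1-p}2(\delta_a+\delta_{-a})$. The differences are minor: you check $q\ge1$ on $\{|x|\ge t\}$ directly from $x^2-u\ge t^2-u>0$ and compute the value via $u=1-(\kappa-1)/(t^2-1)$, whereas the paper inherits the Markov--Luk\'acs split on $[t,\infty)$ and the value identity verbatim from Regime~II, then uses evenness for $(-\infty,-t]$ (which is also what supplies the explicit SOS data that \cref{def:cert} formally asks for).
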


\newcommand{\CorCollapseBody}{%
For $t\ge 1$, $\ \Vone(t,\kappa)=\Vtwo(t,\kappa)$ if and only if $t\ge c(\kappa)$
\textup{(}Regime II; at $\kappa=1$, for all $t>1$, both vanish\textup{)}, where both equal
\eqref{eq:regimeII}. In particular, beyond $c(\kappa)$ the ability to place all deviation
mass on one side is worthless to the adversary: Cantelli's strict one-sided gain over
Chebyshev, present at the two-moment level for every $t$, does not survive the addition of a
binding kurtosis constraint.
}
\begin{corollary}[One-sided/two-sided collapse]\label{cor:collapse}\CorCollapseBody\end{corollary}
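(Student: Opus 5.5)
The plan is to compare the two maps already established, \cref{thm:main-map} for $\Vone$ and \cref{thm:twosided} for $\Vtwo$, region by region for $t\ge1$. Since $\{X\ge t\}\subseteq\{|X|\ge t\}$, we always have $\Vone\le\Vtwo$. The task is therefore to show equality exactly on $t\ge c(\kappa)$ and strict inequality on $1\le t<c(\kappa)$.

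First, I would locate the regimes for $t\ge1$. Because $b(\kappa)\le1$, the case $1\le t<c(\kappa)$ lies in the Cantelli tongue (Regime I), where $\Vone=1/(1+t^2)$. The case $t\ge c(\kappa)$ is Regime II, where $\Vone=(\kappa-1)/((t^2-1)^2+\kappa-1)$. The two-sided map splits instead at $\kappa=t^2$, so I need to compare $c(\kappa)$ with $\sqrt\kappa$. From $b^2+c^2=\kappa+1$ and $bc=1$, we get $c^2=\kappa+1-1/c^2>\kappa$ whenever $c>1$, i.e.\ $\kappa>1$. Hence $t\ge c(\kappa)$ implies $t^2>\kappa$, which puts us in the second branch of \cref{thm:twosided}. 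There $\Vtwo$ is exactly the Regime II formula, so $\Vone=\Vtwo$. (Note $u$ and $q$ are literally the same, but the formula match suffices.) The degenerate case $\kappa=1$ is handled by the boundary-case paragraph: for $t>1$ both values vanish.

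For the converse, take $1\le t<c(\kappa)$, so $\kappa>1$ (when $\kappa=1$, $c=1$ and the range is empty). Then $\Vone=1/(1+t^2)$, and I need to show $\Vtwo>1/(1+t^2)$. There are two subcases.
\begin{itemize}
\item If $\kappa\ge t^2$, then $\Vtwo=1/t^2>1/(1+t^2)$ immediately.
\item If $t^2>\kappa$, then $\Vtwo=(\kappa-1)/((t^2-1)^2+\kappa-1)$. The inequality $\Vtwo>1/(1+t^2)$ is equivalent to $(\kappa-1)(1+t^2)>(t^2-1)^2+\kappa-1$, i.e.\ $(\kappa-1)t^2>(t^2-1)^2$, i.e.\ $\kappa>\kc(t)$. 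By \cref{lem:tongue-equiv}, this holds on the interior $b<t<c$ of the tongue. Strictness needs $t\neq c$, which we have, and $t>b$, which holds since $t\ge1>b$ when $\kappa>1$.
\end{itemize}
Combining the two subcases gives strict inequality on $1\le t<c(\kappa)$, completing the "only if" direction.

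The final interpretive sentence of the corollary then follows: at $t\ge c(\kappa)$ with $\kappa>1$ the kurtosis constraint binds, because $\kappa<\kc(t)$ off the tongue interior. Meanwhile Cantelli's gap $1/t^2-1/(1+t^2)>0$ holds for every $t$.

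I expect the main obstacle to be the clean inequality $c(\kappa)^2\ge\kappa$, with equality only at $\kappa=1$. Without it, the branch of \cref{thm:twosided} for $t\ge c$ would not be determined. The identity $c^2=\kappa+1-b^2$ with $b\le1$ handles it. A second point to watch is boundary bookkeeping at $t=c(\kappa)$, where Regimes I and II glue. There $\kc(t)=\kappa$, so both formulas equal $1/(1+t^2)$, consistent with equality holding.
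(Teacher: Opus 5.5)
Your proposal is correct and follows the paper's proof essentially step for step. For $t\ge c(\kappa)$ you combine Regime~II with the binding branch of \cref{thm:twosided}; you select that branch via $c(\kappa)^2=\kappa+1-b(\kappa)^2>\kappa$, where the paper uses $\kappa\le\kc(t)\le t^2$. For $1\le t<c(\kappa)$ you split on $\kappa\ge t^2$ versus $\kappa<t^2$ and, in the second case, cross-multiply down to $\kappa>\kc(t)$, which is the paper's monotonicity-in-$\kappa$ argument in another form.

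Like the paper, you leave aside the corner $(t,\kappa)=(1,1)$. There $\Vone=\tfrac12<1=\Vtwo$ even though $t=c(1)$, so the equivalence must be read with that point excluded, as the statement's parenthetical and Regime~II's hypothesis indicate.
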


The mechanism is visible in the extremal laws: the one-sided worst case $\{t,a,-a\}$ and the
two-sided worst case $\{\pm t,\pm a\}$ are different distributions with the \emph{same}
objective value, and the certificate $q$ is even, so it cannot tell the two events apart.
When $\kappa>\kc(t)$ (inside the tongue) the collapse fails: $\Vone=1/(1+t^2)$ strictly below
$\Vtwo=\min\{1/t^2,\,(\kappa-1)/((t^2-1)^2+\kappa-1)\}$.

\subsection{The proof-degree phase transition}
\label{sec:pd}

\newcommand{\ThmProofDegreeBody}{%
Under \cref{asm:class}, for every $t>0$, $\kappa\ge1$ \textup{(}with
$\Vone$ from \cref{thm:main-map}\textup{)}:
\[
\PDeg(t,\kappa)\;=\;
\begin{cases}
2, & \kappa\ge\kc(t)\quad(\text{the closed tongue}),\\
4, & \kappa<\kc(t)\quad(\text{everywhere else}),
\end{cases}
\]
and degree $4$ always suffices. Concretely: a degree-$2$ certificate is blind to
\textup{(A3)}, so evaluating any degree-$2$-feasible $q$ against the Cantelli law
$\mu_C=\frac{1}{1+t^2}\delta_t+\frac{t^2}{1+t^2}\delta_{-1/t}$ (feasible for the two-moment
class) forces its bound $\ge 1/(1+t^2)$; off the tongue $\Vone<1/(1+t^2)$ strictly, so no
degree-$2$ certificate proves the tight constant, while the explicit degree-$4$ certificates
of \cref{thm:main-map} do.
}
\begin{theorem}[Proof-degree map]\label{thm:proofdegree}\ThmProofDegreeBody\end{theorem}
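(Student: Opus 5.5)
The proof splits into a lower bound on proof degree (no degree-$2$ certificate proves the tight constant off the tongue), an upper bound (certificates of the stated degree exist), and the identification of the transition curve with $\kappa=\kc(t)$ via \cref{lem:tongue-equiv}.

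\emph{Degree $2$ on the closed tongue.} By \cref{thm:main-map}(I), $\Vone(t,\kappa)=1/(1+t^2)$ when $\kappa\ge\kc(t)$. The Cantelli certificate $q=(x+1/t)^2/(t+1/t)^2$ has $\lambda_4=0$. It is a square, so \eqref{eq:cert-glob} holds. For \eqref{eq:cert-event}, on $[t,\infty)$ we have $x+1/t\ge t+1/t>0$, hence $q\ge1$. In Markov--Luk\'acs form, write $q-1=(x-t)(x+t+2/t)/(t+1/t)^2$. Then $x+t+2/t=(x-t)+2(t+1/t)$, so $q-1=\sigma_0+(x-t)\sigma_1$ with $\sigma_0=(x-t)^2/(t+1/t)^2$ and the constant $\sigma_1=2/(t+1/t)\ge0$. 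Its bound is $B=\lambda_0+\lambda_2=(1/t^2+1)/(t+1/t)^2=1/(1+t^2)$. Since degree $2$ is the minimum kind in \cref{def:cert}, this gives $\PDeg=2$ there. At the corner $(1,1)$ the same certificate gives $1/2$, matching the boundary-case discussion.

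\emph{Degree $4$ off the tongue.} Take any certificate with $\lambda_4=0$. Its bound is $B=\lambda_0+\lambda_2$, and the inequality $\mathbf 1_{[t,\infty)}\le q$ holds pointwise on $\R$. Integrate against the Cantelli law $\mu_C$, which has mean $0$ and variance $1$ but whose fourth moment $\kc(t)$ may exceed $\kappa$. This is harmless because no $x^4$ term appears. We get $1/(1+t^2)=\mu_C([t,\infty))\le\E_{\mu_C}q=\lambda_0+\lambda_2=B$. So every degree-$2$ certificate proves at best $1/(1+t^2)$.

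It therefore suffices to know that $\Vone(t,\kappa)<1/(1+t^2)$ strictly whenever $\kappa<\kc(t)$. This is \cref{lem:rigidity}, and it is the main point needing care. If I had to argue it directly, I would do it regime by regime using \cref{thm:main-map}.
\begin{itemize}
\item In Regime II ($t>c(\kappa)$), I would compare $(\kappa-1)/((t^2-1)^2+\kappa-1)<1/(1+t^2)$. Cross-multiplying, this is equivalent to $(\kappa-1)t^2<(t^2-1)^2$, i.e.\ $\kappa<\kc(t)$.
\item In the plateau and central regimes ($t<b(\kappa)$), I would prefer a uniqueness argument rather than compare formulas, which in IIIb are only implicit. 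Any law attaining $1/(1+t^2)$ makes the Cantelli certificate tight. So it is supported on the zero set of $q-\mathbf 1_{[t,\infty)}$, which is $\{t,-1/t\}$. With mean $0$ and variance $1$ it must then equal $\mu_C$, which is infeasible when $\kappa<\kc(t)$. Together with attainment of the supremum (compactness of $\Ckap$ under the fourth-moment bound, using weak closedness of $\{x\ge t\}$), this yields strict inequality.
\end{itemize}

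\emph{Degree $4$ suffices.} The explicit degree-$4$ certificates of \cref{thm:main-map} for Regimes II, IIIa and IIIb achieve the tight value, so $\PDeg=4$ off the tongue. In general, \cref{rem:exactness} guarantees exactness of the degree-$4$ relaxation.

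The main obstacle is the strictness in the central regime, where no closed form is available. The support/uniqueness argument avoids needing one, and I would use it uniformly in place of the formula comparison.
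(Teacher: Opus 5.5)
Your proposal is correct and follows essentially the paper's own proof. Both use the Cantelli certificate on the closed tongue, and both integrate an arbitrary degree-$2$ certificate against $\mu_C$ to force its bound to be $\ge 1/(1+t^2)$; strictness off the tongue comes from the explicit Regime II comparison and from attainment (\cref{lem:attain}) plus rigidity (\cref{lem:rigidity}) in Regimes IIIa/IIIb, and degree-$4$ sufficiency from the certificates of \cref{thm:main-map}, which for IIIb rests, as in the paper, on solutions of system (S) together with \cref{rem:exactness}.
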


The mechanism is elementary (a degree-$2$ certificate simply cannot see the fourth
moment), and we do not claim otherwise; the content of the theorem is that the transition
curve is \emph{exact and explicit}, and that the obstruction is witnessed by an honest
probability law rather than a pseudo-distribution. The transition curve is exactly the tongue
boundary, an explicit algebraic curve along which the least sufficient proof degree of a
true inequality jumps. The obstruction witness
$\mu_C$ is not a pathological pseudo-distribution but a bona fide probability law that the
weaker proof system cannot exclude; in sum-of-squares terms, the degree-$2$ pseudo-moment
vector $(1,0,1)$ extends $\mu_C$ and certifies that the degree-$2$ relaxation value equals
$1/(1+t^2)$ for every $\kappa$. We record all such obstruction witnesses alongside the
certificates in the artifact repository.

\subsection{The central regime and the \texorpdfstring{$t=0$}{t=0} endpoint}
\label{sec:endpoint}

On IIIb the extremal law has three atoms $\{-c,\,t,\,b\}$ with $c>b>t$: an atom \emph{at the
threshold} reappears, but, unlike Regime II, the far atoms are asymmetric and the
certificate is no longer a perfect square. The matched pair is described by:
\begin{equation}\tag{S}\label{eq:systemS}
\begin{aligned}
&w_0+w_1+w_2=1,\qquad -c\,w_0+t\,w_1+b\,w_2=0,\qquad c^2w_0+t^2w_1+b^2w_2=1,\\
&c^4w_0+t^4w_1+b^4w_2=\kappa,\qquad q(t)=q(b)=1,\qquad q'(b)=0,\\
&\text{where } q(x)=\lambda_4\bigl[(x^2-c^2)^2+k_2\,(x+c)^2\bigr],\\
&\text{subject to}\quad w_i\ge0,\quad 0<t<b<c,\quad k_2\ge0,\quad \lambda_4>0,\quad
q-1\ge0\ \text{on }[t,\infty).
\end{aligned}
\end{equation}
Any solution of \eqref{eq:systemS} yields, exactly,
\[
\Vone(t,\kappa)\;=\;w_1+w_2\;=\;\lambda_0+\lambda_2+\lambda_4\,\kappa
\qquad\text{(see \cref{prop:IIIb})}.
\]
Two exact statements anchor the regime:

\newcommand{\PropEndpointBody}{%
For $\kappa\ge\kss=(3\sqrt3-3)/2$,
\[
\Vone(0,\kappa)\;=\;\sup_{X\in\Ckap}\P(X\geq0)\;=\;1-\frac{2\sqrt3-3}{\kappa},
\]
attained by the three-point law on $\{-c_0,\,0,\,b_0\}$ with $b_0=\sqrt{2\kappa/3}$,
$c_0=b_0(1+\sqrt3)/2$; for $1\le\kappa\le\kss$, $\Vone(0^+,\kappa)$ equals the plateau value
of \cref{thm:main-map}\textup{(IIIa)}. The two expressions agree at $\kappa=\kss$
\textup{(}both equal $1/\sqrt3$\textup{)}.
}
\begin{proposition}[The $t=0$ endpoint]\label{prop:endpoint}\PropEndpointBody\end{proposition}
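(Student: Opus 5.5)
The plan is to prove the proposition as a matched pair, in the same way as every regime of \cref{thm:main-map}. One half is an explicit witness law in $\Ckap$ with $\P(X\ge0)=1-(2\sqrt3-3)/\kappa$. The other half is a degree-$4$ certificate in the sense of \cref{def:cert} for $S=[0,\infty)$ with the same value $B$, after which \cref{lem:weak-duality} closes the argument. The plateau statement for $\kappa\le\kss$ and the gluing at $\kss$ are handled separately at the end.

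\emph{Witness.} Put $r=(1+\sqrt3)/2$, $c_0=rb_0$, and let $w_0,w_1,w_2$ be the masses at $-c_0,0,b_0$.
\begin{itemize}
\item The mean condition forces $w_2=rw_0$.
\item The variance condition becomes $b_0^2w_0(r^2+r)=1$, and $r^2+r=(3+2\sqrt3)/2$.
\item The fourth moment is $b_0^4w_0(r^4+r)$, and $r^4+r=(9+6\sqrt3)/4$. Dividing by the variance identity gives $b_0^2(r^4+r)/(r^2+r)=\tfrac32 b_0^2$, so $\E X^4=\kappa$ exactly when $b_0^2=2\kappa/3$.
\item Then $w_0=3/(\kappa(3+2\sqrt3))=(2\sqrt3-3)/\kappa$, and hence $\P(X\ge0)=1-w_0$, as claimed.
\item Nonnegativity of $w_1=1-(1+r)w_0$ reduces to $\kappa\ge(2\sqrt3-3)(3+\sqrt3)/2=(3\sqrt3-3)/2=\kss$. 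This explains where the constant $\kss$ comes from.
\end{itemize}

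\emph{Certificate.} I would not solve \eqref{eq:systemS} at $t=0$ directly, but guess the certificate from the contact pattern. The polynomial $q-1$ should vanish at the atom $0$ and have a double root at the interior atom $b_0$. The polynomial $q$ should have a double root at the atom $-c_0$. The absence of an $x^3$ term in $q$ then pins the shape down: $q-1=\lambda_4\,x(x-b_0)^2(x+d)$, and the $x^3$ coefficient $d-2b_0$ must vanish, so $d=2b_0$. On $[0,\infty)$ this product is visibly nonnegative, which is already an explicit Markov--Luk\'acs representation of $q-1$.

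Scale to $b_0=1$ and set $f(x)=x(x-1)^2(x+2)$. The remaining requirements are $f'(-r)=0$, which should return $r=(1+\sqrt3)/2$, and $\lambda_4=-1/f(-r)>0$. Global nonnegativity of $q$ then follows by writing $q=\lambda_4(x+c_0)^2g(x)$ with $g$ a monic quadratic and checking that $g$ has negative discriminant. That check makes \eqref{eq:cert-glob} an explicit SOS.

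Finally, $q$ equals $\mathbf 1_{[0,\infty)}$ on every atom of the witness, and the witness has $\E X^4=\kappa$. Therefore
\[
B=\lambda_0+\lambda_2+\lambda_4\kappa=\E_{\mu^\star}q=w_1+w_2,
\]
so the two bounds coincide. Since $\lambda_4$ scales with $b_0^{-4}\propto\kappa^{-2}$, one should confirm that $B$ is of the stated form for every $\kappa$, and not just at one scale.

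\emph{Small $\kappa$ and gluing.} For $1\le\kappa\le\kss$, \cref{thm:main-map}(IIIa) says that $\Vone(t,\kappa)$ is constant for $0<t\le b(\kappa)$, because $\tauofk\le0$ there. Hence $\Vone(0^+,\kappa)$ is the plateau value $c(\kappa)/\sqrt{\kappa+3}$. The agreement at $\kss$ is a direct check that both sides equal $1/\sqrt3$:
\begin{itemize}
\item On the plateau side, $\kss-1=(3\sqrt3-5)/2$ and $\kss+3=(3\sqrt3+3)/2$, so $\sqrt{(\kss-1)/(\kss+3)}$ should simplify to $2/\sqrt3-1$. Then $\tfrac12\bigl(1+2/\sqrt3-1\bigr)=1/\sqrt3$.
\item On the endpoint side, $1-(2\sqrt3-3)/\kss=1/\sqrt3$ is a short rationalization.
\end{itemize}

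The step I expect to be the main obstacle is the certificate algebra. Concretely, this means checking that $f'(-r)=0$ has $r=(1+\sqrt3)/2$ as its relevant root, that the cofactor quadratic $g$ has no real roots, and that the resulting $\lambda_4$ produces exactly the value $1-(2\sqrt3-3)/\kappa$ for all $\kappa\ge\kss$. If the ansatz $d=2b_0$ misbehaves, I would fall back on solving \eqref{eq:systemS} with $t=0$ symbolically and factoring.
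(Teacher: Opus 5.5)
Your proposal is correct and is essentially the paper's proof. The paper writes down the same certificate in the plateau shape $q=\lambda_4[(x^2-c_0^2)^2+k_2(x+c_0)^2]$ with $k_2=2b_0(c_0-b_0)$; your cofactor $g$ is exactly $(x-c_0)^2+k_2$, so your discriminant check is its manifest sum-of-two-squares form. It verifies the same factorization $q-1=\lambda_4\,x(x+2b_0)(x-b_0)^2$ on $[0,\infty)$, uses the same three-point witness whose middle weight is nonnegative iff $\kappa\ge\kss$, and reduces the small-$\kappa$ clause to Regime IIIa via $\tauofk\le0$. The only differences are cosmetic: you derive the certificate from the contact pattern, and you obtain the value by complementary slackness, $B=\E_{\mu^\star}q=w_1+w_2$, which already holds at every $\kappa\ge\kss$ and so settles your scaling worry, whereas the paper evaluates $\lambda_0+\lambda_2+\lambda_4\kappa$ directly.
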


The constant $2\sqrt3-3$ is that of \citet{hezhangzhang2010}; our map exhibits it as the
$t=0$ boundary value of the central regime. In the interior of IIIb no comparable closed form
exists: at the rational point $(t,\kappa)=(\tfrac12,2)$, exact elimination in \eqref{eq:systemS}
shows the minimal polynomial of $\Vone$ over $\Q$ is
\begin{align*}
98875\,V^6-73824030\,V^5+345341355\,V^4-642400008\,V^3&\\
{}+592968512\,V^2-271538688\,V+49352704&\;=\;0
\end{align*}
(degree six, irreducible; coefficients exact, obtained twice by independent exact
eliminations). Since any expression in nested \emph{square} roots of rational functions of
$(t,\kappa)$ takes, at a rational point, values inside an iterated quadratic extension of
$\Q$, whose elements have algebraic degree a power of $2$ (\cref{item:tower} in
\cref{app:register}), a degree-$6$ value rules out
every formula of the Regime I--IIIa shape. (Higher-order radicals are not ruled out; that
would require the Galois group.) On grids, we certify IIIb values by \emph{sandwiches}: the witness value from
\eqref{eq:systemS} below, and a rational $\varepsilon$-retreat certificate above, matching to
the solver's precision (\cref{sec:pipeline}).

\section{\texorpdfstring{\LemmaForge}{LemmaForge}: a certified discovery pipeline}
\label{sec:pipeline}

We discovered the theorems of \cref{sec:results} through an AI-guided search organized
around the pipeline of this section, \LemmaForge, and re-validated them independently of
how they were found. The division of labor is strict: AI agents proposed candidate regime
boundaries, extremal configurations, and certificate shapes; the pipeline's numerical and
symbolic layers tested every proposal and discarded what failed; certification of the
survivors was granted by the independent verifier alone. Crucially, this automates a
discovery process previously carried out by hand, one sharp inequality at a time, and it
adds nothing to the trusted base: the design principle throughout is that no claimed
constant rests on floating-point evidence, so every claim ships with a hand-checkable
proof and an independently re-verified certificate. We describe the pipeline briefly;
formats, checks, and reproduction instructions are in \cref{app:pipeline}, a certificate
is reproduced verbatim with the verifier's output in \cref{app:walkthrough}, fully worked
instances of the map (hand-checkable in minutes) are in \cref{app:worked}, and the
complete instance tables are in \cref{app:tables}.

\begin{algorithm}[b]
\caption{\LemmaForge{} (driver). The producing subroutines
(\cref{alg:solve,alg:round}) and the certifying one (\cref{alg:verify}) share no code;
only \textsc{Verify} grants \textsc{certified} status.}
\label{alg:lemmaforge}
\begin{algorithmic}[1]
\Require instance $\mathcal L=\bigl(K,\{(g_i,\bowtie_i,c_i)\}_{i=0}^m,S=[t,\infty),\text{sense}\bigr)$, rational data
\Ensure exact certificate (and witness where attained); verdict
\State $(\tilde\lambda,\{\tilde G_j\},\tilde V)\gets\Call{SolveCrossCheck}{\mathcal L}$
\State conjecture exact value from $\tilde V$ by PSLQ / sequence fitting
  \Comment{status \textsc{recognized}, never more}
\If{a proved closed form covers $\mathcal L$ (Regimes I/II/IIIa; benchmark cells)}
  \State build the certificate and witness \emph{symbolically} from \cref{thm:main-map}
\Else
  \State certificate $\gets\Call{Round}{\mathcal L,\tilde\lambda,\{\tilde G_j\},\tilde V}$
\EndIf
\State attach the extremal witness when the optimum is attained and exact
\State \Return \Call{Verify}{certificate}
  \Comment{fresh process; the only path to \textsc{certified}}
\end{algorithmic}
\end{algorithm}

\suppressfloats[t]
\begin{algorithm}[t]
\caption{\textsc{SolveCrossCheck}: compile, solve, and get a second opinion.}
\label{alg:solve}
\begin{algorithmic}[1]
\Require instance $\mathcal L$, rational data
\Ensure numerical dual $\tilde\lambda,\{\tilde G_j\}$, value $\tilde V$ corroborated by an
  independent LP, and the split pseudo-moments; or the instance flagged and aborted
\State compile the pieces of \cref{def:cert} (event: $q-1$ on $K\cap S$; support: $q$ on
  $K$) with their Markov--Luk\'acs blocks (\cref{lem:sos-line,lem:sos-ray});
  impose coefficient matching and multiplier signs
\State solve the dual SDP (Clarabel; SCS fallback) $\to$
  $\tilde\lambda,\{\tilde G_j\},\tilde V$; read the split pseudo-moments off the
  equality duals
\State solve the independent discretized LP (geometric far-tail grid)
\State \textbf{if} $|\tilde V-\mathrm{LP}|>10^{-4}$ \textbf{then} flag the instance and
  abort \textbf{else} \Return $(\tilde\lambda,\{\tilde G_j\},\tilde V)$
\end{algorithmic}
\end{algorithm}

\begin{algorithm}[t]
\caption{\textsc{Round}: Peyrl--Parrilo rationalization with $\varepsilon$-retreat.}
\label{alg:round}
\begin{algorithmic}[1]
\Require instance $\mathcal L$; numerical dual $\tilde\lambda,\{\tilde G_j\}$ and value
  $\tilde V$ from \cref{alg:solve}
\Ensure an exact certificate claiming $\tilde V$, or one claiming the explicitly weaker
  $\tilde V+\varepsilon$ ($\varepsilon$-retreat, labeled as such), or \textsc{fail}
\For{denominator bound $N\in\{10^{3},10^{6},10^{9}\}$}
  \State rationalize $(\tilde\lambda,\{\tilde G_j\})$ at denominator $N$; project exactly
    (rational normal equations) onto the coefficient-identity subspace
  \State \textbf{if} every projected Gram passes exact $LDL^{\top}$ (\cref{lem:ldlt})
    \textbf{then} \Return certificate for $\tilde V$
\EndFor
\For{$\varepsilon\in\{10^{-9},10^{-6}\}$}\Comment{$\varepsilon$-retreat: restore a strict interior}
  \State re-solve with the objective pinned to $\tilde V+\varepsilon$; rerun
    lines~1--4, the certificate now claiming the explicitly weaker bound
    $\tilde V+\varepsilon$
\EndFor
\State \Return \textsc{fail}\Comment{recorded, never promoted}
\end{algorithmic}
\end{algorithm}

\begin{algorithm}[t]
\caption{\textsc{Verify} (independent checker; fresh process, exact arithmetic,
no producer code).}
\label{alg:verify}
\begin{algorithmic}[1]
\Require a certificate file: constraints, multipliers $\lambda$, pieces with blocks and
  Gram matrices, claimed bound, optional witness --- all exact (rational or quadratic-surd)
\Ensure verdict \textsc{verified-tight} / \textsc{verified-bound} / \textsc{fail};
  \textsc{certified} status is granted exactly on pass
\State re-parse the certificate; recompute $q=\sum_i\lambda_ig_i$; reject any float
\State check each piece identity coefficient-by-coefficient; each Gram PSD by exact
  $LDL^{\top}$ \emph{and} the coefficient criterion
  (\cref{lem:ldlt,lem:charpoly}); multiplier signs; claimed bound $=\sum_i\lambda_ic_i$
\State \textbf{if} a witness is attached: check its moments and tail mass exactly
\State \Return \textsc{verified-tight} / \textsc{verified-bound} / \textsc{fail}
  (checks V1--V8, \cref{app:pipeline})
\end{algorithmic}
\end{algorithm}

\begin{table}[t]
\centering\small
\begin{tabular}{llll}
\toprule
cell & class / objective & truth & verdicts \\
\midrule
B1 Markov & $[0,\infty)$, $\E X{=}1$; $\P(X\ge t)$ & $1/t$ & 4 tight \\
B2 Cantelli & $\R$, $(0,\sigma^2)$; $\P(X\ge t)$ & $\sigma^2\!/(\sigma^2{+}t^2)$ & 4 tight \\
B3 3-moment & $[0,\infty)$, $(1,m_2,m_3)$; $\P(X\ge t)$ & LP cross-check & 8 bound, 1 fail \\
B4 tight PZ & $[0,\infty)$, $(1,m_2)$; $\inf\P(X\ge\theta)$ & $\frac{(1-\theta)^2}{(1-\theta)^2+m_2-1}$ & 6 bound (inf unattained) \\
B5 Khintchine $p{=}4$ & sphere, $n{=}2..8$ (POP) & $3-2/n$ & 7 numeric-validated$^{\dagger}$ \\
Zelen slice & $\R$, $(0,1,\gamma{=}0,\kappa)$; $t>1$ & \citet{zelen1954} & 5 numeric-validated$^{\dagger}$ \\
\midrule
F1 kurtosis map & $\Ckap$; $\P(X\ge t)$ & \cref{thm:main-map} & 45 tight, 2 bound($\varepsilon$), 1 fail \\
F2 skewness-pinned & $(0,1,\gamma,\kappa{\le})$ & --- & 47 bound, 13 fail \\
\bottomrule
\end{tabular}
\caption{Validation battery and certified frontier instances, decomposed by verdict.
\emph{tight} = exact certificate + exact extremal witness, both re-verified by the
independent checker in a fresh process (\textsc{verified-tight}); \emph{bound} = exact
certificate for the bound only (\textsc{verified-bound}; for the two IIIb instances the
certified bound is value${}+\varepsilon$, $\varepsilon\le10^{-6}$); \emph{numeric-validated}
($\dagger$) = value matches the classical truth to $10^{-10}$ but no certificate is emitted
(the POP mode and the literature cross-check are validation demos, not certificate-bearing
cells); \emph{fail} = rounding failure on a numerically degenerate instance, recorded and
never promoted. Certification and LP cross-checking are separate axes: all solved values,
including the failed-certification instances, agree with the independent discretized LP to
its $10^{-7}$--$10^{-8}$ resolution.}
\label{tab:benchmarks}
\end{table}

\paragraph{Architecture.} \cref{alg:lemmaforge} is the driver; \cref{alg:solve,alg:round,alg:verify} are its subroutines. A lemma instance (support, moment constraints, objective event,
parameter values) is compiled into the dual sum-of-squares program of \cref{def:cert} using
the exact univariate representations of \cref{rem:exactness} (never a generic hierarchy
template), so the semidefinite value \emph{is} the moment-problem value. The SDP is solved
numerically (Clarabel, with an SCS fallback); the primal atoms are recovered from the duals of
the coefficient-matching equalities (the split pseudo-moment sequences; cf.\ the flat-data
viewpoint of \citealp{curtofialkow1996}), and every value is
cross-checked against an \emph{independent} discretized linear program over $10^4$--$2\cdot
10^4$ support atoms with a geometric far-tail grid: a deliberately low-tech second opinion
that has no code in common with the SDP path. Exact values are then produced along two
routes:
\begin{itemize}
\item \textbf{Symbolic route} (Regimes I, II, IIIa; Markov; Cantelli; tight Paley--Zygmund):
  the certificate is constructed in exact arithmetic directly from the closed forms of
  \cref{thm:main-map}: Gram matrices with rational or quadratic-surd entries.
\item \textbf{Rounding route} (three-moment benchmarks; the skewness-pinned surface; Regime
  IIIb): the numerical dual is rounded to rationals and projected, in exact rational linear
  algebra, onto the affine subspace of the polynomial identity, in the manner of
  \citet{peyrlparrilo2008}; if the projected Gram matrices fail exact positive
  semidefiniteness, an $\varepsilon$-retreat re-solves with the objective pinned to
  $\text{value}+\varepsilon$ (restoring a strict interior) and certifies the weaker,
  explicitly-stated bound.
\end{itemize}

\paragraph{Independent verification.} Certification is granted by a separate program
(\texttt{forge/verify.py}) that shares no code with the compiler, solvers, or rounding: it
re-parses the certificate, recomputes $q$ from the multipliers, checks the polynomial
identities coefficient-by-coefficient over $\Q$ (or exactly over quadratic extensions),
checks every Gram matrix positive semidefinite by exact $LDL^\top$ \emph{and} by a
characteristic-polynomial criterion, checks multiplier signs against constraint directions,
and re-evaluates the claimed bound. If an extremal witness is attached it checks feasibility
and tightness exactly, upgrading the verdict to \textsc{verified-tight}. A certificate is
counted only when this checker passes \emph{in a fresh process}. The trusted base is thus
the checker's $\sim$370 audited lines plus sympy's exact rational and algebraic
arithmetic; the latter is
shared, we note honestly, with the producing side as a library, which is why the two PSD criteria are
computed by different code paths. The verifier was itself
validated by mutation testing: eight classes of corrupted certificates (wrong bound, indefinite
Gram, broken identity, illegal multiplier, infeasible witness, smuggled floats, wrong signs)
are all rejected.

\paragraph{Benchmarks.} \cref{tab:benchmarks} summarizes the validation battery. Every
certificate-bearing benchmark with a classical closed form is reproduced exactly and
certified (B5 and the Zelen slice are validation demos: values match to $10^{-10}$ with no
certificate emitted); the recognition
stage (integer-relation detection and exact sequence fitting) is validated by recovering
$3-2/n$ from the seven Khintchine $p{=}4$ values with no hints. One benchmark corrects the
textbook: for the class $\{X\ge0,\ \E X=1,\ \E X^2=m_2\}$ the tight Paley--Zygmund constant
is $(1-\theta)^2/((1-\theta)^2+m_2-1)$, not $(1-\theta)^2/m_2$; the pipeline computes the
tight value, our certificate proves it, and the infimum is approached but not attained (the
extremal atom sits at $\theta^-$); the discretized LP converges to it from above, as it
must.

\paragraph{Certified instances of the map.} For the kurtosis cell, 47 of 48 grid instances
across all four regimes are \textsc{certified}: for Regimes I/II with fully rational
certificates and witnesses (verdict \textsc{verified-tight}), for IIIa with exact
quadratic-surd data (e.g., at $(t,\kappa)=(\tfrac12,\tfrac54)$ the verified bound is
$\tfrac12+\tfrac{\sqrt{17}}{34}$), and for IIIb by rational $\varepsilon$-retreat sandwiches.
The single failure is the boundary-degenerate instance $(t,\kappa)=(\tfrac12,\tfrac32)$
(exactly on $\kappa=\ks$), where rounding could not restore interiority; it remains labeled
numerical, with the failure itself recorded: under the project's discipline, a failed
certification is data, not an embarrassment to hide.

\paragraph{Extremal gallery.} Every certified instance carries its worst-case distribution
(\cref{fig:gallery}): the machine-extracted atoms confirm the structural story of
\cref{thm:main-map}: two atoms on the tongue and plateau, three atoms with an atom
\emph{at} the threshold in Regimes II and IIIb, the freezing of the plateau law, and the
reappearance of the threshold atom in the central regime.

\begin{figure}[t]
\centering
\includegraphics[width=\linewidth]{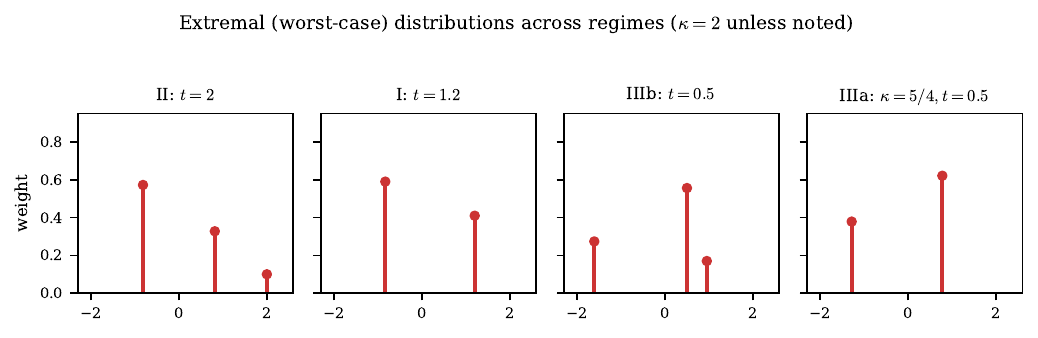}
\caption{Worst-case distributions across the regimes (atoms and weights, exact where the
regime has a closed form). Note the structural transitions: two-point laws (I: the
Cantelli pair; IIIa: frozen at $\{b,-c\}$) against three-point laws (II, IIIb), and an
atom at the threshold everywhere except IIIa.}
\label{fig:gallery}
\end{figure}

\section{Applications}
\label{sec:consequences}

Exactness makes the map usable: closed forms can be inverted, composed with averaging,
and transferred into proof systems. This section records four applications. Each
follows from \cref{thm:main-map,thm:twosided,cor:collapse} by direct substitution and
inherits their exactness; no new machinery enters. Full proofs are in
\cref{app:consproofs}.

\paragraph{Exact worst-case quantiles.} Inverting the map in $t$ answers exactly the
practical question of how many standard deviations buy a prescribed exceedance
probability, and locates the point where the kurtosis budget starts paying.

\newcommand{\CorQuantileBody}{%
Let $\kappa>1$ and let $X$ range over all laws with mean $\mu$, variance $\sigma^2$, and
$\E(X-\mu)^4\le\kappa\sigma^4$. For $0<p<1/(1+b(\kappa)^2)$ set
\[
t_p(\kappa)\;=\;
\begin{cases}
\sqrt{\dfrac{1-p}{p}}, & \dfrac{1}{1+c(\kappa)^2}\;\le\;p
  \qquad\text{\textup{(}Cantelli branch: kurtosis-independent\textup{)}},\\[2ex]
\Bigl(1+\sqrt{(\kappa-1)\,\dfrac{1-p}{p}}\;\Bigr)^{\!1/2}, & p\;\le\;\dfrac{1}{1+c(\kappa)^2}
  \qquad\text{\textup{(}kurtosis branch\textup{)}}.
\end{cases}
\]
\begin{enumerate}
\item[\textup{(i)}] \textbf{\textup{(One-sided.)}} $\P(X\ge\mu+t\sigma)\le p$ holds for
every such $X$ if and only if $t\ge t_p(\kappa)$.
\item[\textup{(ii)}] \textbf{\textup{(Two-sided.)}} For every $0<p<1$,
$\ \P(|X-\mu|\ge t\sigma)\le p$ holds for every such $X$ if and only if
$t\ge t^{(2)}_p(\kappa)$, where $t^{(2)}_p(\kappa)=1/\sqrt{p}$ when $p\ge1/\kappa$, and
$t^{(2)}_p(\kappa)$ is the kurtosis branch of $t_p(\kappa)$ when $p\le1/\kappa$ (that
formula remains valid on all of $p\le1/\kappa$).
\end{enumerate}%
}
\begin{corollary}[Worst-case quantiles]\label{cor:quantile}\CorQuantileBody\end{corollary}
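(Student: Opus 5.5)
After the affine reduction of \cref{sec:prelim}, both parts become statements about $\Ckap$ with $t$ measured in standard deviations. Statement (i) is equivalent to $\Vone(t,\kappa)\le p$, and statement (ii) is equivalent to $\Vtwo(t,\kappa)\le p$. So the plan is to invert the monotone map $t\mapsto\Vone(t,\kappa)$ using the closed forms of \cref{thm:main-map}, and likewise $\Vtwo$ using \cref{thm:twosided}.

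\textbf{Monotonicity and range.} The first step is to record that $\Vone(\cdot,\kappa)$ is nonincreasing and continuous in $t$, since the events $\{X\ge t\}$ shrink and \cref{thm:main-map} asserts continuous gluing. On $[b,\infty)$ it is strictly decreasing, because both the Regime I and Regime II formulas are strictly decreasing there. The restriction $p<1/(1+b^2)$ guarantees that the relevant $t$ satisfies $t>b(\kappa)$. I will argue this from the left-of-tongue side: there the value is at least the value at $t=b$, which equals $1/(1+b^2)$ by continuity and Regime I. This needs only monotonicity, not the IIIa/IIIb formulas. Hence $\{t:\Vone(t,\kappa)\le p\}=[t_p,\infty)$, where $t_p$ is the unique solution of $\Vone(t_p,\kappa)=p$ in $(b,\infty)$. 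Uniqueness follows from strict decrease on $[b,\infty)$.

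\textbf{Solving in each branch.} On the tongue $b\le t\le c$, solving $1/(1+t^2)=p$ gives $t=\sqrt{(1-p)/p}$. This solution lies in $[b,c]$ exactly when $1/(1+c^2)\le p\le 1/(1+b^2)$. On $t\ge c$, set $p=(\kappa-1)/((t^2-1)^2+\kappa-1)$. This gives $(t^2-1)^2=(\kappa-1)(1-p)/p$, and taking the root with $t^2\ge1$ (valid since $c\ge1$) yields the stated kurtosis branch. The condition $t\ge c$ is equivalent to $p\le 1/(1+c^2)$: by continuity the two formulas agree at $t=c$, where the value is $1/(1+c^2)$. This can also be checked via $(c^2-1)^2=(\kappa-1)c^2$, which follows from $c^4-(\kappa+1)c^2+1=0$. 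At $p=1/(1+c^2)$ the two branches coincide, so the case overlap is harmless.

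\textbf{Two-sided part.} For $t\ge1$, \cref{thm:twosided} gives $\Vtwo=\min\{1/t^2,\,(\kappa-1)/((t^2-1)^2+\kappa-1)\}$ in effect. More precisely, it gives $1/t^2$ for $t^2\le\kappa$ and the quartic formula for $t^2\ge\kappa$, and at $t^2=\kappa$ both equal $1/\kappa$. For $t<1$ we have $\Vtwo=1$, so with $p<1$ we need $t\ge1$. The function $\Vtwo$ is continuous and strictly decreasing on $[1,\infty)$. Inverting $1/t^2=p$ on $t^2\le\kappa$ gives $t=1/\sqrt p$, valid exactly when $p\ge1/\kappa$. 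Inverting the quartic formula on $t^2\ge\kappa$ gives the kurtosis branch, valid when $p\le1/\kappa$. The parenthetical remark requires checking that the kurtosis-branch formula needs no extra constraint on $p\le1/\kappa$, since $t^2\ge\kappa$ is automatic there.

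\textbf{Main obstacle.} The delicate step is the left boundary in part (i). I must ensure that no $t<b$ in Regimes IIIa/IIIb achieves $\Vone\le p$ when $p<1/(1+b^2)$. Monotonicity handles this, as argued above, so I never need system (S). The rest consists of routine verification of the branch-boundary identities.
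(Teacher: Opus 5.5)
Your proposal is correct and follows the paper's proof essentially step for step. The shared steps are: $\Vone(\cdot,\kappa)$ is nonincreasing and strictly decreasing on $[b,\infty)$; you invert explicitly on the tongue and tail branches, using the boundary value $1/(1+c^2)$ at $t=c$; you bound $\Vone(t,\kappa)\ge\Vone(b,\kappa)=1/(1+b^2)>p$ for $t<b$; and you do the same two-branch inversion of $\Vtwo$, with the check $t_p^2\ge\kappa\iff p\le1/\kappa$.

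One small tidy-up: you never actually need continuity of $\Vone$ in $t$, whose Regime IIIb part the paper only verifies on grids. The point $t=b$ lies in the closed tongue, so Regime I gives $\Vone(b,\kappa)=1/(1+b^2)$ directly. The agreement at $t=c$ is the algebraic gluing identity you already check via $(c^2-1)^2=(\kappa-1)c^2$.
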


At $\kappa=3$: $t_{0.05}=2.68$, $t_{0.01}=3.88$, $t_{0.001}=6.76$ standard deviations,
against Cantelli's $4.36$, $9.95$, $31.6$: the kurtosis budget compresses the $99.9\%$
one-sided worst-case quantile by a factor $4.7$. \cref{cor:quantile} is an exact answer
to the stress-shock question of \citet{maher2019}, and hands the inner problem of a
fourth-moment distributionally robust chance constraint \citep{delageye2010,wiesemann2014}
a closed form.

\paragraph{Sample means and median-of-means.} Averages enter the map through a one-line
identity.

\newcommand{\LemAvgKurtBody}{%
If $X_1,\dots,X_m$ are i.i.d.\ with mean $\mu$, variance $\sigma^2$, and kurtosis
$\kappa$, then $\sqrt m\,(\bar X_m-\mu)/\sigma$ has mean $0$, variance $1$, and fourth
moment exactly $3+(\kappa-3)/m$. No third-moment assumption is made.%
}
\begin{lemma}[Kurtosis of an average]\label{lem:avgkurt}\LemAvgKurtBody\end{lemma}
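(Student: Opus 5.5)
We standardize and expand the fourth power directly. Set $Y_i=(X_i-\mu)/\sigma$, so the $Y_i$ are i.i.d.\ with $\E Y_i=0$, $\E Y_i^2=1$, $\E Y_i^4=\kappa$; the third moment $\gamma=\E Y_i^3$ is finite but otherwise arbitrary, since $\E|Y|^3\le(\E Y^4)^{3/4}$. Then $Z:=\sqrt m\,(\bar X_m-\mu)/\sigma=m^{-1/2}\sum_{i=1}^m Y_i$.

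The mean and variance follow from linearity and independence: $\E Z=0$ and $\E Z^2=m^{-1}\sum_i\E Y_i^2=1$. For the fourth moment, write
\[
\E Z^4=m^{-2}\sum_{i,j,k,l}\E\bigl[Y_iY_jY_kY_l\bigr].
\]
Sort the index quadruples by their pattern of coincidences. Any quadruple in which some index appears exactly once contributes zero, because that factor is independent of the others and has mean zero. This kills the patterns $(3,1)$, $(2,1,1)$ and $(1,1,1,1)$. In particular the pattern $(3,1)$, the only one where $\gamma$ could enter, drops out as $\E Y_i^3\,\E Y_j=0$. This is why no skewness assumption is needed.

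Two patterns survive:
\begin{itemize}
\item all four indices equal: $m$ terms, each $\kappa$;
\item two distinct pairs: $3m(m-1)$ ordered quadruples, because there are $3$ ways to split four positions into two pairs and $m(m-1)$ ordered choices of distinct values. Each contributes $(\E Y^2)^2=1$.
\end{itemize}
Hence
\[
\E Z^4=m^{-2}\bigl(m\kappa+3m(m-1)\bigr)=\frac{\kappa+3(m-1)}{m}=3+\frac{\kappa-3}{m}.
\]

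The only step needing care is the count $3m(m-1)$ together with the check that every vanishing pattern really vanishes. Both are routine.
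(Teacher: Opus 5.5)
Your proof is correct and follows the paper's argument essentially line for line: standardize, expand the fourth power by index-coincidence pattern, discard every pattern with a solitary index by independence and mean zero, and count the surviving $m\kappa+3m(m-1)$. The only cosmetic difference is that you get finiteness of $\E|Y|^3$ from Lyapunov's bound $(\E Y^4)^{3/4}$, where the paper uses the pointwise bound $|x|^3\le 1+x^4$.
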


Hence $\P\bigl(\bar X_m-\mu\ge t\sigma/\sqrt m\bigr)\le\Vone\bigl(t,\,3+(\kappa-3)/m\bigr)$,
exact for the moment information used; at $m=100$, $\kappa=9$, $t=3$ the bound is
$0.0312$ against Cantelli's $0.1$. As $m$ grows, every underlying $\kappa$ is drawn to
the same slice: the $\kappa=3$ curve of the map is the universal tail law of averages.
Composing with \cref{cor:quantile}(ii) gives an exact-constant confidence interval:
writing $\kappa_m=3+(\kappa-3)/m$, for every $0<p<1$ and every $(\kappa,m)\ne(1,1)$
(the one case where $\kappa_m=1$),
\[
\P\Bigl(\,|\bar X_m-\mu|\;\ge\;t^{(2)}_p(\kappa_m)\,\frac{\sigma}{\sqrt m}\Bigr)\;\le\;p,
\]
valid because the law of the standardized mean lies in $\CkapOf{\kappa_m}$ with
$\kappa_m>1$ (\cref{lem:avgkurt}). At $p=0.01$, $\kappa=9$, $m=100$: the $99\%$ interval has
half-width $3.91\,\sigma/\sqrt m$ against Chebyshev's $10\,\sigma/\sqrt m$, and the
constant is exact for the moment information used.

For median-of-means \citep{lugosimendelson2019} with $k$ blocks of $m=n/k$ samples, the
textbook analysis bounds the per-block deviation at twice the block deviation
$\sigma_B=\sigma\sqrt{k/n}$ by Chebyshev: probability at most $1/4$.
\cref{thm:twosided} with \cref{lem:avgkurt} replaces this, at the same threshold, by the
exact worst case
\[
\Vtwo(2,\kappa_B)\;=\;\min\biggl\{\frac14,\;\frac{\kappa_B-1}{8+\kappa_B}\biggr\},
\qquad \kappa_B=3+\frac{\kappa-3}{m}
\]
(the two branches of \cref{thm:twosided} at $t=2$, written as one $\min$:
$(\kappa_B-1)/(8+\kappa_B)\le\tfrac14$ iff $\kappa_B\le4$), strictly below $1/4$
whenever $\kappa_B<4$, i.e.\ $m>\kappa-3$, and tending to $\Vtwo(2,3)=2/11$: the worked
instance of \cref{app:worked-II}, which by \cref{cor:collapse} is simultaneously the
two-sided value. In the standard binomial--Chernoff step $\exp(-k\,d(\tfrac12\,\|\,p))$,
with $d(a\,\|\,p)=a\ln\tfrac ap+(1-a)\ln\tfrac{1-a}{1-p}$, the exponent improves by the
factor $d(\tfrac12\|\tfrac2{11})/d(\tfrac12\|\tfrac14)\approx1.80$, giving the same
deviation and confidence from $45\%$ fewer blocks on no assumption beyond finite
kurtosis: averaging itself drives $\kappa_B$ to $3$.

\paragraph{Margins.} A classifier's error is a tail probability of its margin, so the
map converts moment information about the margin into an exact error bound.

\newcommand{\CorMarginBody}{%
Let $M$ be a real random variable with mean $\gamma>0$, variance $\sigma^2$, and
$\E(M-\gamma)^4\le\kappa\sigma^4$ ($\kappa\ge1$). Then
\[
\P(M\le 0)\;\le\;\Vone\!\bigl(\gamma/\sigma,\,\kappa\bigr),
\]
and the bound is exact: the supremum of $\P(M\le0)$ over all such $M$ equals
$\Vone(\gamma/\sigma,\kappa)$, and is attained.%
}
\begin{corollary}[Exact margin bound]\label{cor:margin}\CorMarginBody\end{corollary}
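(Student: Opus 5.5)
The plan is to reduce \cref{cor:margin} to \cref{thm:main-map} by reflecting and standardizing the margin. Set $X=(\gamma-M)/\sigma$. Then $\E X=0$ and $\E X^2=1$. Since $(X)^4=(M-\gamma)^4/\sigma^4$, we also get $\E X^4\le\kappa$, so the law of $X$ lies in $\Ckap$.

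For the upper bound, observe that the event $\{M\le0\}$ coincides with $\{X\ge\gamma/\sigma\}$. Both are closed events, so atoms at the boundary are counted consistently. Hence
\[
\P(M\le0)=\P(X\ge t)\le\Vone(t,\kappa),\qquad t=\gamma/\sigma>0,
\]
directly from the definition of $\Vone$.

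For exactness, the map $M\mapsto(\gamma-M)/\sigma$ is a bijection between admissible margin laws and $\Ckap$, with inverse $M=\gamma-\sigma X$. Any $X\in\Ckap$ yields an admissible $M$ with the same probability, so the two suprema are equal.

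Attainment is the only substantive point. We need that the supremum defining $\Vone(t,\kappa)$ is achieved for every $t>0$ and $\kappa\ge1$, and the argument proceeds regime by regime through \cref{thm:main-map}.
\begin{itemize}
\item In Regimes I, II and IIIa the theorem exhibits explicit extremal laws $\mu^\star$.
\item In IIIb the extremal law is the three-point law of system \eqref{eq:systemS}.
\item The degenerate case $\kappa=1$ is handled by the uniform law on $\{\pm1\}$. For $t\le1$ it attains $\tfrac12$. For $t>1$ the value is $0$, which every law attains.
\end{itemize}
Pushing the extremal $X^\star$ forward to $M^\star=\gamma-\sigma X^\star$ then gives a margin attaining the bound.

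The main obstacle is IIIb, where the theorem guarantees solvability of (S) only numerically. I would close this gap without (S) by a compactness argument. Consider the set of laws in $\Ckap$: it is tight, since second moments are bounded. Its fourth moments are uniformly bounded, so second moments are uniformly integrable and $\E X$, $\E X^2$ pass to weak limits. Moreover $\E X^4\le\kappa$ is preserved under weak limits by Fatou. Together these make $\Ckap$ weakly compact.

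The functional $\mu\mapsto\mu([t,\infty))$ is upper semicontinuous by the portmanteau theorem, because $[t,\infty)$ is closed. An upper semicontinuous functional on a compact set attains its maximum, so the supremum is attained in all regimes simultaneously. This argument would also cover the boundary cases without regime-specific bookkeeping.
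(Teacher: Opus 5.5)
Your proof is correct and essentially the paper's: the same affine bijection $x\mapsto\gamma-\sigma x$ between $\Ckap$ and the margin class, the same identification $\{M\le0\}=\{X\ge\gamma/\sigma\}$, and attainment via the same weak-compactness argument (tightness, uniform integrability of $x$ and $x^2$, lower semicontinuity of $\E X^4$, upper semicontinuity of the mass of a closed set), which the paper records as \cref{lem:attain} and transports through the map. Your regime-by-regime detour is unnecessary, and you are right that it could not close Regime IIIb alone, since solvability of \eqref{eq:systemS} is only machine-verified at grid points.
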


Reading $M$ as a classification margin ($M>0$ means correct), \cref{cor:margin} is the
exact fourth-moment correction of the classical Chebyshev--Cantelli margin step
$\P(M\le0)\le1/(1+\gamma^2/\sigma^2)$: by the tongue, that step is unimprovable by any
kurtosis assumption while $b(\kappa)\,\sigma\le\gamma\le c(\kappa)\,\sigma$, and beyond
$c(\kappa)\,\sigma$ the error decays quartically in the margin-to-noise ratio,
$\sim(\kappa-1)\,\sigma^4/\gamma^4$, rather than quadratically.

\paragraph{Directional tails under certifiable kurtosis.} In sum-of-squares estimation
\citep{kotharisteurer2017,kss2018moments,hopkinsli2018}, fourth-moment information
arrives as \emph{certifiable} kurtosis. The map supplies the exact per-direction tail
constant available at that level of the hierarchy.

\newcommand{\PropCertifiableBody}{%
Let $X$ take values in $\R^d$ with mean $\mu$ and covariance $\Sigma$, and suppose its
kurtosis is \emph{$\kappa$-certifiable}: the degree-$4$ polynomial
$u\mapsto\kappa\,\langle u,\Sigma u\rangle^2-\E\langle u,X-\mu\rangle^4$ is a sum of
squares. Then for every $u\ne0$ with $\sigma_u^2=\langle u,\Sigma u\rangle>0$ and every
$t>0$,
\[
\P\bigl(\langle u,X-\mu\rangle\;\ge\;t\,\sigma_u\bigr)\;\le\;\Vone(t,\kappa).
\]
Moreover:
\begin{enumerate}
\item[\textup{(i)}] \textbf{\textup{(Degree-$4$ witnesses.)}} Whenever $(t,\kappa)$ lies
outside the central regime (in particular, whenever $t\ge b(\kappa)$), every inequality
in the derivation carries an explicit degree-$4$ sum-of-squares witness.
\item[\textup{(ii)}] \textbf{\textup{(Unimprovability.)}} For $\kappa\ge3$ and
$t\ge b(\kappa)$, some $X$ with $\kappa$-certifiable kurtosis and $\Sigma=I_d$ attains
equality in a prescribed direction.
\item[\textup{(iii)}] \textbf{\textup{(Two-sided.)}} For $t\ge c(\kappa)$, by
\cref{cor:collapse}, the same constant is exact for the two-sided event
$\{|\langle u,X-\mu\rangle|\ge t\,\sigma_u\}$.
\end{enumerate}%
}
\begin{proposition}[Exact directional tails at degree $4$]\label{prop:certifiable}\PropCertifiableBody\end{proposition}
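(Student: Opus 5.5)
The plan is to reduce the statement to the one-dimensional map of \cref{thm:main-map}, one direction at a time.

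\textbf{Main bound.} Fix $u\ne0$ with $\sigma_u>0$ and set $Y=\langle u,X-\mu\rangle/\sigma_u$. Then $\E Y=0$ and $\E Y^2=1$. Evaluating the hypothesis polynomial at this $u$ gives $\kappa\sigma_u^4-\E\langle u,X-\mu\rangle^4\ge0$, because a sum of squares is pointwise nonnegative. Hence $\E Y^4\le\kappa$ and $Y\in\Ckap$, and the definition of $\Vone$ yields the displayed bound directly. Nothing beyond pointwise nonnegativity of the SOS polynomial is used here.

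\textbf{Part (i).} We chain three witnessed steps.
\begin{itemize}
\item The pointwise inequality $\mathbf 1_{[t,\infty)}\le q$ is witnessed by the explicit univariate certificate of \cref{thm:main-map}. On Regimes I, II and IIIa this is an explicit square, or a Markov--Luk\'acs pair. Outside IIIb no system (S) is involved, so all data are closed-form.
\item Substituting $x=\langle u,X-\mu\rangle/\sigma_u$ preserves the SOS identities; they are polynomial in $x$ and hence degree $\le4$ in the variable.
\item The moment step $\E q(Y)\le\lambda_0+\lambda_2+\lambda_4\kappa$ uses $\lambda_4\ge0$ times the assumed SOS $\kappa\langle u,\Sigma u\rangle^2-\E\langle u,X-\mu\rangle^4$. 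This is degree $4$ in $u$.
\end{itemize}
Together these three steps are a degree-$4$ derivation. The parenthetical claim ``$t\ge b(\kappa)$ is outside IIIb'' is read off the regime description in \cref{thm:main-map}.

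\textbf{Part (ii), the main obstacle.} We need an $X$ attaining equality in direction $e_1$ whose kurtosis is certifiably bounded in all directions, not just $e_1$. The plan is to take $X=(Z,G_2,\dots,G_d)$, where:
\begin{itemize}
\item $Z$ is the extremal law of \cref{thm:main-map}, either the Cantelli pair (Regime I, $b\le t\le c$) or the three-point law $\{t,\pm a\}$ (Regime II, $t\ge c$), rescaled if necessary so that $\E Z^4$ is exactly the value used;
\item $G_i$ are independent standard Gaussians, independent of $Z$.
\end{itemize}
Then $\Sigma=I_d$.

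Expanding, with $m_3=\E Z^3$ and $m_4=\E Z^4\le\kappa$,
\[
\E\langle u,X\rangle^4=m_4u_1^4+4m_3u_1\textstyle\sum_{i\ge2}u_i^3\E G_i^3+6u_1^2\|u'\|^2+3\|u'\|^4,
\]
where the $G^3$ term vanishes because Gaussian third moments are zero and independence kills the other cross terms. The target polynomial is
\[
\kappa\|u\|^4-\E\langle u,X\rangle^4=(\kappa-m_4)u_1^4+(2\kappa-6)u_1^2\|u'\|^2+(\kappa-3)\|u'\|^4 .
\]
Each coefficient is nonnegative when $\kappa\ge3$ and $m_4\le\kappa$, and each term is a square times a sum of squares. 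So the polynomial is SOS, which is exactly where the hypothesis $\kappa\ge3$ enters. Equality in direction $e_1$ holds because $Z$ attains $\Vone(t,\kappa)$.

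The step I expect to be delicate is confirming that odd moments of $Z$ never enter. They appear only multiplied by odd moments of independent centered Gaussian coordinates, or by $\E G_i=0$, so they vanish. The fourth-moment identity above then holds exactly.

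\textbf{Part (iii).} For $t\ge c(\kappa)$ we apply the same reduction to the event $|Y|\ge t$. \cref{thm:twosided} and \cref{cor:collapse} give $\Vtwo(t,\kappa)=\Vone(t,\kappa)$, and the even certificate $q=(x^2-u)^2/(t^2-u)^2$ serves both events. For exactness we use the same product construction as in (ii), now with the symmetric four-point two-sided extremal law $\{\pm t,\pm a\}$ as the first coordinate (valid for $\kappa\ge3$).
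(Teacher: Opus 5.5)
Your proposal is correct and follows the paper's proof: the same one-dimensional reduction gives the bound and (i), and (ii) uses the same product construction, except that you pad with independent Gaussians where the paper uses Rademacher signs (so, with $W=\sum_{i\ge2}u_iG_i$, you get $\E W^4=3\|u'\|^4$ exactly and the paper's extra SOS term $2\sum_{i\ge2}u_i^4$ never appears), while for (iii) you switch to the symmetric four-point law where the paper simply reuses the three-point witness, whose atoms $\pm a$ already lie in $(-t,t)$. Two cosmetic slips: the odd cross terms are really $4u_1^3m_3\,\E W$ and $4u_1\,\E Z\,\E W^3$ (both zero, so your final identity stands), and no rescaling of $Z$ is needed or permitted, since only $m_4\le\kappa$ is used and rescaling would destroy unit variance.
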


\cref{prop:certifiable} instantiates the proofs-to-algorithms reading of
\cref{thm:proofdegree}: from certifiable kurtosis alone, $\Vone$ is the exact
directional tail constant: no analysis resting on that hypothesis can use a smaller one
(the witness lies in the hypothesis class), and none is needed (the certificate is
explicit). On the tongue, the degree-$2$ Chebyshev--Cantelli argument already extracts
all of it; beyond the tongue, degree $4$ is necessary and exactly sufficient.

\section{Discussion}
\label{sec:discussion}

\paragraph{What the map says.} The complete solution of $\Vone$ over $\Ckap$ turns three
folklore intuitions into exact statements. Kurtosis information is worthless precisely when
the budget covers the Cantelli law's own fourth moment ($\kappa\ge\kc(t)$): worthless not
approximately but exactly, and the boundary is an explicit curve. In the far tail the right
constant is $\kappa-1=\Var(X^2)$, not $\kappa=\E X^4$: the tight bound
$(\kappa-1)/((t^2-1)^2+\kappa-1)$ improves the fourth-moment Markov bound by the factor
$\kappa/(\kappa-1)$, unboundedly as $\kappa\downarrow1$. And one-sidedness --- the entire
content of Cantelli's improvement over Chebyshev --- is a low-moment phenomenon: on the whole
tail regime the one-sided and two-sided worst cases coincide.

\paragraph{Proof degree as a coordinate.} \cref{thm:proofdegree} is, to our knowledge, the
first exact proof-degree phase diagram for a probability inequality: for each $(t,\kappa)$ we
know the least SOS degree that proves the tight constant, and the transition happens along
the explicit curve $\kappa=\kc(t)$. By the proofs-to-algorithms correspondence, degree here
is not a syntactic curiosity: degree-$4$ SOS proofs of tail bounds are exactly the
concentration facts available to degree-$4$ estimation algorithms
\citep{kotharisteurer2017,hopkinsli2018}. The map quantifies, in the simplest nontrivial
setting, where the cheaper proof system is already optimal and where paying for degree $4$
is mathematically necessary. Extending the map (to higher moments, to support-constrained
classes, to multivariate events at fixed degree) is a program we believe is now clearly
posed and mechanically executable.

\paragraph{Limitations.} Three are worth stating plainly. First, Regime IIIb has no closed
form of the Regime I--IIIa kind: those formulas are nested \emph{square} roots of rational
functions, any value of which lies in an iterated quadratic extension of $\Q$ at a rational
parameter point and hence has algebraic degree a power of $2$; the degree-six irreducible
minimal polynomials at rational points (independently reproduced by Gr\"obner elimination at
two distinct points) therefore \emph{prove} no such formula exists. Expressibility by
higher-order radicals is not ruled out (that would require the Galois group); so the theorem
there is an exact algebraic characterization plus
certified sandwiches on grids, in the same sense in which \citet{bertsimaspopescu2005} treat
their non-closed-form regions. Second, our global claim on IIIb (solvability of
\eqref{eq:systemS} at \emph{every} interior parameter) is verified at every point we
evaluated but not proven in general; the paper's statements are worded accordingly, and no
downstream result depends on global solvability. Third, some suprema/infima are approached
rather than attained (tight Paley--Zygmund; three-moment cells with escaping mass); the
values are still exact, and the phenomenon, extremal mass escaping to a boundary or to
infinity, is itself part of the landscape and is flagged per instance in the artifact
table.

\paragraph{Methodological remark.} Every inequality in this paper can be checked by hand:
each is a polynomial identity plus sign conditions on small matrices. The pipeline's value is
not that it replaces proof but that it \emph{finds} the right identities, including regime
boundaries ($\kss$, $\ks$, $\tauofk$) that we did not anticipate, and that its
verification layer (exact arithmetic, independent code path, fresh process, mutation-tested)
keeps the human's trust anchored to something narrower and more auditable than a numerical
solver's output. The one benchmark where the machine disagreed with the textbook
(Paley--Zygmund) it disagreed correctly.

\paragraph{Open directions.} (i) The skewness-pinned surface $(\gamma,\kappa)$: our grid is
computed and largely certified; the regime geometry above the $\gamma=0$ slice (where we
recover Zelen) awaits the same complete treatment. (ii) The full proof-degree map of the
two-sided problem. (iii) Support-constrained
classes ($X\ge0$ with kurtosis): the same machinery applies verbatim. (iv) Multivariate
cells at degree $4$ (maximal correlations, union-bound slack for $k\le3$ correlated
variables, Hanson--Wright constants at the information available to a degree-$4$ proof),
where exactness is no longer automatic and the proof-degree map becomes the object of
interest itself; certificate frameworks built for exactly this lossy regime, such as SONC
\citep{dressleriliman2017}, may be a natural tool there. (v) Higher even moments: for
the classes $\E X^{2m}\le\kappa_{2m}$ we \emph{conjecture} that the one-/two-sided
collapse of \cref{cor:collapse} persists beyond the corresponding tongue: the
mechanism, an even optimal certificate that cannot distinguish the two events, is not
specific to degree $4$. (vi) Formal export: the certificates are already in a shape
(rational identities plus $LDL^\top$ witnesses) that a proof assistant can consume
\citep{harrison2007}.

\bibliographystyle{plainnat}
\bibliography{refs}

\clearpage
\appendix
\begin{center}
\textbf{\huge Appendix}
\end{center}
\section{Glossary of symbols and dependency graph}
\label{app:glossary}

\begin{center}\small
\vspace{-1.1em}%
\captionsetup{skip=4pt}%
\renewcommand{\arraystretch}{1.1}%
\begin{tabular}{>{\raggedright\arraybackslash}p{0.13\dimexpr\linewidth-6\tabcolsep\relax}%
                >{\raggedright\arraybackslash}p{0.33\dimexpr\linewidth-6\tabcolsep\relax}%
                >{\raggedright\arraybackslash}p{0.54\dimexpr\linewidth-6\tabcolsep\relax}}
\toprule
symbol & definition & role \\
\midrule
$\Ckap$ & $\{\E X{=}0,\ \E X^2{=}1,\ \E X^4\le\kappa\}$ & the bounded-kurtosis class
  (\cref{asm:class}); $\Ckap^{=}$: same with $\E X^4=\kappa$ (\cref{app:eqclass}) \\
$\Vone(t,\kappa)$ & $\sup_{X\in\Ckap}\P(X\ge t)$ & one-sided worst-case tail
  (\cref{thm:main-map}) \\
$\Vtwo(t,\kappa)$ & $\sup_{X\in\Ckap}\P(|X|\ge t)$ & two-sided worst case
  (\cref{thm:twosided}) \\
$b(\kappa)$ & $\tfrac12(\sqrt{\kappa+3}-\sqrt{\kappa-1})$ & left tongue boundary; also the
  plateau witness's positive atom \\
$c(\kappa)$ & $\tfrac12(\sqrt{\kappa+3}+\sqrt{\kappa-1})=1/b(\kappa)$ & right tongue
  boundary; onset of the tail regime and of the collapse \\
$u,\ s$ & $\sqrt{\kappa-1},\ \sqrt{\kappa+3}$ & surd shorthands ($s^2-u^2=4$, $b=\frac{s-u}2$,
  $c=\frac{s+u}2$) \\
$\kc(t)$ & $t^2-1+1/t^2$ & fourth moment of the Cantelli pair at threshold $t$; tongue
  criterion $\kappa\ge\kc(t)$ and the proof-degree phase boundary \\
$\ks$ & $3/2$ & largest $\kappa$ with a nonempty plateau ($\tau(\ks)=b(\ks)=1/\sqrt2$) \\
$\kss$ & $(3\sqrt3-3)/2\approx1.098$ & below it the plateau reaches $t=0$
  ($\tau(\kss)=0$); threshold of the endpoint formula (\cref{prop:endpoint}) \\
$\tauofk$ & $\sqrt{u(s+u)}-b(\kappa)$ & lower edge of the plateau window (for
  $\kappa\le\ks$) \\
$p$ & $(\kappa-1)/((t^2-1)^2+\kappa-1)$ & Regime II value \eqref{eq:regimeII} \\
$u_\ast,\ a$ & $u_\ast=\frac{1-pt^2}{1-p}$, $a=\sqrt{u_\ast}$ & Regime II certificate root
  and witness atom locations $\pm a$ \\
$w_\pm$ & $\frac{(1-p)\mp pt/a}{2}$ & Regime II witness weights at $\pm a$ \\
$q(x)$ & $\lambda_0+\lambda_1x+\lambda_2x^2+\lambda_4x^4$ & certificate polynomial
  (\cref{def:cert}); $\lambda_4\ge0$, no $x^3$ term \\
$\sigma_0,\sigma_1$ & SOS polynomials & Markov--Luk\'acs pieces:
  $q-1=\sigma_0+(x-t)\sigma_1$ on $[t,\infty)$ (\cref{lem:sos-ray}) \\
$\mu^\star$, $\mu_C$ & extremal laws & regime witnesses; $\mu_C$ = Cantelli pair
  $\frac1{1+t^2}\delta_t+\frac{t^2}{1+t^2}\delta_{-1/t}$ (\cref{lem:rigidity}) \\
$\PDeg(t,\kappa)$ & least certificate degree & proof degree (\cref{def:pd,thm:proofdegree}) \\
(S) & system \eqref{eq:systemS} & Regime IIIb characterization: atoms $\{-c,t,b\}$ +
  tangency conditions \\
$k_2,\ \lambda_4$ & $2b(c-b)$, $\bigl((c-b)(b+c)^3\bigr)^{-1}$ & parameters of the
  IIIa/IIIb certificate shape $\lambda_4[(x^2-c^2)^2+k_2(x+c)^2]$ \\
$t_p(\kappa)$ & $\bigl(1+\sqrt{(\kappa-1)\tfrac{1-p}{p}}\bigr)^{1/2}$ (kurtosis branch) &
  exact worst-case quantile / chance-constraint threshold (\cref{cor:quantile});
  $t^{(2)}_p$: its two-sided variant \\
$\kappa_m$, $\kappa_B$ & $3+(\kappa-3)/m$ & fourth moment of a standardized $m$-average
  (\cref{lem:avgkurt}); drives the confidence-interval and median-of-means constants \\
$\sigma_u$ & $\langle u,\Sigma u\rangle^{1/2}$ & directional deviation in the
  certifiable-kurtosis bound (\cref{prop:certifiable}) \\
\bottomrule
\end{tabular}
\captionof{table}{Symbols in order of first substantial use. In Regime II discussions, $u_\ast$ is
written to avoid clashing with $u=\sqrt{\kappa-1}$; the theorem statement uses $u$ for it,
locally.}
\label{tab:glossary}
\end{center}

\paragraph{Dependency graph of the results.} \cref{fig:resultmap} charts the paper's own results --- the
imported facts live in the register (\cref{app:register}) and are not drawn. An arrow
$A\to B$ means the proof of $B$ uses $A$. Two lemmas are global roots: weak duality
(\cref{lem:weak-duality}) underlies every upper bound, and the tongue equivalence
(\cref{lem:tongue-equiv}) every regime boundary; their arrows are drawn once, into the
map, though every result above uses them.

\begin{center}
\resizebox{\linewidth}{!}{%
\begin{tikzpicture}[x=1cm,y=1cm,
  n/.style={draw, rounded corners=2pt, align=center, font=\footnotesize,
            inner sep=3.5pt, fill=white},
  root/.style={n, fill=gray!13},
  regn/.style={n, fill=blue!8},
  topn/.style={n, fill=orange!14},
  sup/.style={n, fill=green!10},
  appn/.style={n, fill=violet!9},
  arr/.style={-{Stealth[length=2mm]}, semithick, black!60}]
\node[regn] (rI)    at (0.6,2.4)  {\ref{thm:main-map}(I)\\ tongue};
\node[regn] (rII)   at (2.9,2.4)  {\ref{thm:main-map}(II)\\ tail};
\node[regn] (rIIIa) at (5.4,2.4)  {\ref{thm:main-map}(IIIa)\\ plateau};
\node[regn] (rIIIb) at (8.0,2.4)  {\ref{thm:main-map}(IIIb)\\ system (S)};
\coordinate (dip) at (4.3,1.62);
\node[draw, dashed, rounded corners=3pt, inner sep=7pt,
      fit=(rI)(rII)(rIIIa)(rIIIb)(dip)] (box) {};
\node[font=\footnotesize\itshape, fill=white, inner sep=2pt, anchor=south]
      at ([yshift=1pt]box.north) {\cref{thm:main-map} (the map)};
\draw[arr] (rII) -- (rIIIa);
\draw[arr] (rIIIa) -- (rIIIb);
\draw[arr] (rI.south) -- ++(0,-0.34) -| (rIIIb.south);
\node[root] (te) at (2.0,0.1)  {\cref{lem:tongue-equiv}\\ tongue equivalence};
\node[root] (wd) at (6.2,0.1)  {\cref{lem:weak-duality}\\ weak duality};
\draw[arr] (te.north) -- ($(box.south west)!0.22!(box.south east)$);
\draw[arr] (wd.north) -- ($(box.south west)!0.64!(box.south east)$);
\node[sup] (rep) at (11.0,0.1) {\ref{lem:sos-line}, \ref{lem:sos-ray}\\ SOS representation};
\node[sup] (ar)  at (12.0,2.4) {\ref{lem:attain}, \ref{lem:rigidity}\\ attainment, rigidity};
\node[sup] (mv)  at (15.2,2.4) {Fact \ref{mv:minpoly}\\ minimal polynomial};
\node[topn] (t32) at (1.4,4.9)  {\cref{thm:twosided}\\ two-sided map};
\node[topn] (t33) at (4.5,4.9)  {\cref{cor:collapse}\\ collapse};
\node[topn] (t34) at (8.3,4.9)  {\cref{thm:proofdegree}\\ proof degree};
\node[topn] (t35) at (12.8,4.9) {\cref{prop:endpoint}, \S\ref{sec:endpoint}\\ endpoint; no closed form};
\draw[arr] (rII.north) -- (t32.south);
\draw[arr] (rII.north) -- (t33.south);
\draw[arr] (t32.east) -- (t33.west);
\draw[arr] ($(box.north west)!0.88!(box.north east)$) --
  node[left=1.5pt, font=\scriptsize, pos=0.3]{I, II, IIIa} (t34.south);
\draw[arr] (ar.north) -- (t34.south east);
\draw[arr] (rep.north) -- (t34.south);
\draw[arr] (rIIIb.north) -- (t35.south west);
\draw[arr] (mv.north) -- (t35.south east);
\node[appn] (aq)  at (2.0,7.35)  {\ref{cor:quantile}\\ quantiles};
\node[appn] (aav) at (4.8,7.35)  {\ref{lem:avgkurt}\\ averages, CI};
\node[appn] (ama) at (7.6,7.35)  {\ref{cor:margin}\\ margin};
\node[appn] (adi) at (10.5,7.35) {\ref{prop:certifiable}\\ directional};
\node[draw, dashed, rounded corners=3pt, inner sep=7pt,
      fit=(aq)(aav)(ama)(adi)] (appsbox) {};
\node[font=\footnotesize\itshape, fill=white, inner sep=2pt, anchor=south]
      at ([yshift=1pt]appsbox.north) {\cref{sec:consequences} (applications)};
\draw[arr] (t32.north) -- (aq.south);
\draw[arr] (t33.north) -- (adi.south);
\draw[arr] ($(box.north west)!0.70!(box.north east)$)
        -- ($(appsbox.south west)!0.48!(appsbox.south east)$);
\end{tikzpicture}}
\captionof{figure}{Dependency graph of the paper's results (ours only). An arrow means
``is used in the proof of''. The dashed box is \cref{thm:main-map}, whose four regime
clauses depend on one another as drawn; the two root lemmas at the lower left are used
by everything above them (arrows drawn once, into the map). The endpoint node also
carries the no-closed-form statement of \cref{sec:endpoint}, which rests on the
machine-verified minimal polynomial (\cref{mv:minpoly}); its small-$\kappa$ clause
additionally reads off the IIIa plateau value. Top box: the applications of
\cref{sec:consequences}; every one of them uses the map, whose arrow is drawn once,
into their box. \cref{cor:quantile} additionally inverts the two-sided map,
\cref{prop:certifiable} uses the collapse, and the attainment clause of
\cref{cor:margin} uses \cref{lem:attain} (arrow omitted for legibility).}
\label{fig:resultmap}
\end{center}

\section[Background: moment problems, sums of squares, and SDP]{Background:
the moment problem, sums of squares, and semidefinite programming}
\label{app:background}

This appendix is a self-contained primer on the dictionary the paper translates through:
a sharp probability inequality, an optimization problem over measures (the \emph{moment
problem}), a polynomial-nonnegativity certificate (\emph{sums of squares}), and a
\emph{semidefinite program}. Nothing here is new; the standard sources are
\citet{isii1962}, \citet{karlinstudden1966}, \citet{bertsimaspopescu2005},
\citet{lasserre2001}, and the monographs \citet{lasserre2010,lasserre2015,nie2023moment}, and a reader
fluent in them can skip ahead. Each step is
illustrated on the smallest instance in the paper: Cantelli's inequality, which the map
reproduces as Regime I of \cref{thm:main-map}.

\paragraph{The generalized moment problem.} Every Chebyshev-type inequality answers the
same underlying question: among all laws consistent with a few prescribed moments, how
much probability can a given event carry? The classical move, going back to Chebyshev
and Markov, is to treat the unknown law itself as the optimization variable; the
question then becomes a linear program over probability measures, and the sharp
constant is its optimal value.

\begin{definition}[Moment program]\label{def:bg-gmp}
Given a Borel event $S\subseteq\R$ and finitely many prescribed moments (values or
bounds), the \emph{generalized moment problem} is the linear program over probability
measures that extremizes $\mu(S)$ subject to the moment constraints. The quantity
$\Vone(t,\kappa)$ of \cref{sec:prelim} is the instance
\begin{equation}\label{eq:bg-primal}
\sup_{\mu}\;\;\mu(S)
\qquad\text{s.t.}\quad
\textstyle\int\!d\mu=1,\;\;
\int\!x\,d\mu=0,\;\;
\int\!x^2\,d\mu=1,\;\;
\int\!x^4\,d\mu\le\kappa,
\end{equation}
with $S=[t,\infty)$.
\end{definition}

The unknown is the \emph{law} $\mu$, not a density: the feasible set is convex, and its
extreme points are measures supported on finitely many atoms --- for four moment
conditions, never more than a handful, and for the class $\Ckap$ never more than three
atoms anywhere in this paper. This is the classical theory of principal representations
for Tchebycheff systems \citep{karlinstudden1966,kreinnudelman1977}; it is why every
worst case exhibited in the paper is a two- or three-point law, and why a search for
extremal distributions may be confined to small atomic configurations from the start.
Whether the supremum is \emph{attained} is a separate question, answered here
constructively: every regime of the map exhibits a law achieving its value, and
\cref{lem:attain} records the abstract attainment argument that the proof-degree theorem
additionally needs.

\paragraph{Duality: polynomials above indicators.} The dual of \eqref{eq:bg-primal}
replaces measures by polynomial test functions, and its shape can be read off the
Lagrangian: attach one multiplier to each moment condition of \eqref{eq:bg-primal} ---
$\lambda_0$ to the mass, $\lambda_1$ to the mean, $\lambda_2$ to the second moment,
$\lambda_4$ to the kurtosis budget --- and collect what multiplies $\mu$.

\begin{definition}[Dual polynomial]\label{def:bg-dual}
A \emph{dual feasible point} for \eqref{eq:bg-primal} is a polynomial
$q(x)=\lambda_0+\lambda_1x+\lambda_2x^2+\lambda_4x^4$ with
\[
q(x)\;\ge\;\mathbf 1_S(x)\quad\text{for all }x\in\R,
\qquad\lambda_4\ge 0;
\]
its \emph{value} is the constraint data paired with the multipliers,
$\lambda_0+\lambda_2+\lambda_4\kappa$.
\end{definition}

The sign rules are the standard ones: equality
constraints get free multipliers, the one-sided budget $\E X^4\le\kappa$ gets
$\lambda_4\ge0$, and a moment that appears in no constraint gets no multiplier at all ---
$\Ckap$ says nothing about $\E X^3$, so $q$ has no cubic term. This last, automatic
omission is the entire difference between our skewness-free class and the pinned-moment
classics of \cref{app:priorwork}, where a pinned $\E X^3$ buys the dual an extra cubic
degree of freedom. Taking expectations of the pointwise inequality gives weak duality in
one line (\cref{lem:weak-duality}): every dual feasible $q$ is a proof of an upper bound.
At an optimal pair the measure lives exactly
where the polynomial touches the indicator: atoms of $\mu$ off the event sit at zeros of
$q$, atoms on the event at points where $q=1$ (complementary slackness). This is how
extremal laws are read off certificates throughout the paper, and conversely why optimal
Gram matrices are rank-deficient (the touching forces zeros), a fact that matters for the
rounding step below. Strong duality (no gap) holds under mild conditions
\citep{isii1962,bertsimaspopescu2005}; the paper never needs to invoke it abstractly,
because every stated bound comes as a \emph{matched pair} --- certificate above, attaining
distribution below.

\paragraph{One variable is exact: sums of squares.} Dual feasibility asks for proofs of
two pointwise inequalities: $q\ge 0$ on $\R$ and $q-1\ge 0$ on $S$.

\begin{definition}[Sum of squares]\label{def:bg-sos}
A polynomial $p\in\R[x]$ is a \emph{sum of squares} (SOS) if $p=g_1^2+\dots+g_m^2$ for
finitely many $g_i\in\R[x]$. An SOS polynomial is nonnegative on $\R$ by inspection; the
converse direction is the substance.
\end{definition}

In several variables,
deciding nonnegativity is intractable, and replacing ``nonnegative'' by ``SOS'' gives
the tractable but generally lossy relaxations of the
Lasserre/Parrilo hierarchy \citep{lasserre2010,fleming2019}. In \emph{one} variable the
relaxation is lossless: a univariate polynomial nonnegative on $\R$ is a sum of two
squares, and one nonnegative on the ray $[t,\infty)$ of degree $\le 4$ has the exact
representation
\[
q-1\;=\;\sigma_0+(x-t)\,\sigma_1,
\qquad \sigma_0,\sigma_1\text{ SOS},\;\;
\deg\sigma_0\le4,\;\deg\sigma_1\le2
\]
(the Markov--Luk\'acs theorem; see \citealp{nesterov2000}). The paper does not import
this as a black box: the two forms it uses are proved from the fundamental theorem of
algebra as \cref{lem:sos-line,lem:sos-ray}, and the register of what \emph{is} imported
is \cref{app:register}. Consequently the degree-$4$ program computes $\Vone$
\emph{exactly} --- there is no relaxation gap to apologize for (\cref{rem:exactness}) ---
and ``proof degree'' as studied in \cref{thm:proofdegree} is a property of the inequality
itself, not of a hierarchy level.

\paragraph{SOS search is a semidefinite program.} The bridge to computation is the Gram
representation. A polynomial $p$ of degree $2d$ is a sum of squares if and only if there
is a positive semidefinite matrix $G$, indexed by the monomial vector
$z(x)=(1,x,\dots,x^d)^{\!\top}$, with
\begin{equation}\label{eq:bg-gram}
p(x)\;=\;z(x)^{\!\top} G\,z(x)
\qquad\Longleftrightarrow\qquad
\sum_{i+j=k} G_{ij}\;=\;p_k\quad(0\le k\le 2d),
\end{equation}
where $p_k$ is the coefficient of $x^k$: the identity on the left is, coefficient by
coefficient, the system of affine equations on the right. Writing $G=LL^{\!\top}$
(Cholesky/$LDL^{\!\top}$) recovers the explicit squares as the entries of
$L^{\!\top}z(x)$.

\begin{definition}[Semidefinite program]\label{def:bg-sdp}
A \emph{semidefinite program} (SDP) is the optimization of a linear functional over the
intersection of the cone of positive-semidefinite matrices with an affine subspace.
\end{definition}

SDPs are convex, and interior-point solvers deliver ten-plus digits in well under a
second at our sizes. By \eqref{eq:bg-gram}, the certificate search of
\cref{def:cert} is exactly such a program. Concretely, for the one-sided event the
unknowns are the multipliers $\lambda_0,\lambda_1,\lambda_2,\lambda_4$ and three Gram
matrices: one for the global inequality $q\ge0$ in the basis $z=(1,x,x^2)$ (size
$3\times3$), and one each for $\sigma_0$ ($3\times3$) and $\sigma_1$ (basis $(1,x)$,
$2\times2$) in the ray representation of $q-1$; the two-sided event contributes one such
representation per ray. The constraints are the affine coefficient matches
\eqref{eq:bg-gram} tying every Gram entry to the $\lambda_i$, plus $G\succeq0$ per block;
the objective $\lambda_0+\lambda_2+\lambda_4\kappa$ is linear. This is the program
\textsc{LemmaForge}
compiles and solves (\cref{sec:pipeline}); checking a \emph{given} certificate needs no
optimization at all --- one verifies the affine identities by expansion and the
$G\succeq0$ conditions in exact arithmetic by the pivot or coefficient criteria
(\cref{app:psd}), which is precisely what the independent verifier does.

\paragraph{The dictionary, run on Cantelli.} Take the two-moment class ((A1)--(A2) only)
and $S=[t,\infty)$. The degree-$2$ certificate of \cref{thm:main-map}(I) is
$q(x)=(tx+1)^2/(1+t^2)^2$, and the whole loop is visible by hand. \emph{Dual
feasibility:} $q\ge0$ on $\R$ is the single square \eqref{eq:bg-gram} with rank-one Gram
$G=\bigl(\begin{smallmatrix}1&t\\ t&t^2\end{smallmatrix}\bigr)/(1+t^2)^2\succeq0$ in the
basis $z=(1,x)$, and $q-1\ge0$ on
$[t,\infty)$ is the identity
\[
(tx+1)^2\;=\;t^2(x-t)^2\;+\;2t(1+t^2)\,(x-t)\;+\;(1+t^2)^2,
\]
i.e.\ $q-1=\sigma_0+(x-t)\sigma_1$ with $\sigma_0=\bigl(t(x-t)\bigr)^2/(1+t^2)^2$ a
square and $\sigma_1=2t/(1+t^2)$ a nonnegative constant. \emph{Value:}
$\E q(X)=(\,t^2\E X^2+2t\E X+1\,)/(1+t^2)^2=1/(1+t^2)$. \emph{Extremal law from the
touching set:} $q$ vanishes only at $x=-1/t$ and equals $1$ on $S$ only at $x=t$, so the
worst case must be the two-point law on $\{-1/t,\,t\}$; matching $\E X=0$, $\E X^2=1$
gives masses $t^2/(1+t^2)$ and $1/(1+t^2)$, and the latter is the tail mass: the bound is
attained. \emph{Where the bounded-kurtosis class enters:} this witness has
$\E X^4=t^2-1+1/t^2=\kc(t)$, so the Cantelli pair survives in $\Ckap$ exactly when
$\kappa\ge\kc(t)$ --- the tongue of \cref{fig:phase}. Off the tongue the budget
\textup{(A3)} excludes the witness, the value drops strictly below $1/(1+t^2)$, and
certifying the new value requires the quartic term: proof degree jumps from $2$ to $4$
(\cref{thm:proofdegree}). The same walk at degree $4$, with the kurtosis constraint
active and all arithmetic in rationals, is carried out at $(t,\kappa)=(2,3)$ in
\cref{app:worked-II}.

\paragraph{From floating point to exact certificates.} An interior-point SDP solver
returns $\lambda$ and Gram matrices to ten-ish digits, which proves nothing. The repair,
following \citet{peyrlparrilo2008}, is to rationalize the numerical solution and project
it, in exact arithmetic, back onto the affine subspace \eqref{eq:bg-gram}; if the
projected Gram matrices remain positive semidefinite --- checked exactly, never
numerically (\cref{app:psd}) --- the identity and hence the bound are theorems. One
subtlety is generic here: \emph{optimal} Gram matrices are rank-deficient (the touching
zeros above), so they sit on the boundary of the PSD cone, where naive rounding falls
off. The pipeline's $\varepsilon$-retreat (\cref{app:pipeline}) trades an explicitly
recorded $\varepsilon$ of the bound for a strictly interior, roundable target. Atom
recovery from the equality duals is the flat-data phenomenon of
\citet{curtofialkow1996}, made algorithmic for the truncated moment problem by
\citet{nie2014truncated}.

\section{External and auxiliary facts}
\label{app:register}

This section lists, exhaustively, everything the paper's results rest on that is not proved
in \cref{app:proofs} itself. The inputs fall into three classes: \emph{external theorems}
(classical results we cite and use), \emph{auxiliary facts} (small standard facts we use and,
for completeness, prove here), and \emph{machine-verified exact facts} (finite computations
in exact arithmetic, reproducible from the artifact). Numerical solvers appear in none of
the three classes: the SDP and LP solvers were used to \emph{discover} the results, and no
theorem depends on their output.

\subsection{External theorems}

Four classical theorems are used; the first three summarize the weak-convergence
toolkit from the classical monograph of \citet{billingsley1999}; throughout,
$\mu_n\Rightarrow\mu$ denotes weak convergence of probability measures
($\int f\,d\mu_n\to\int f\,d\mu$ for every bounded continuous $f$). Each is
stated in the exact form in which it is applied, followed by the audit note recording
where it enters.

\begin{fact}[Prokhorov]\label{fact:prokhorov}
A tight sequence of probability measures on $\R$ has a weakly convergent subsequence.
\end{fact}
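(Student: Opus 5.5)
The statement to prove is Prokhorov's theorem on $\R$: every tight sequence has a weakly convergent subsequence. I would argue through distribution functions and Helly's selection theorem, which I prove inline since the statement is one-dimensional.

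\emph{Step 1 (Helly selection).} Let $\mu_n$ be tight and set $F_n(x)=\mu_n((-\infty,x])$. Enumerate $\Q=\{r_1,r_2,\dots\}$. Each sequence $(F_n(r_k))_n$ lies in $[0,1]$, so a diagonal extraction gives a subsequence $n_j$ along which $F_{n_j}(r)\to G(r)$ for every $r\in\Q$. The limit $G$ is nondecreasing on $\Q$. Define $F(x)=\inf_{r>x,\,r\in\Q}G(r)$. Then $F$ is nondecreasing and right-continuous. A sandwich argument gives $F_{n_j}(x)\to F(x)$ at every continuity point $x$ of $F$: bound $F_{n_j}(x)$ between $F_{n_j}(r)$ and $F_{n_j}(r')$ for rationals $r<x<r'$, then let $r,r'\to x$.

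\emph{Step 2 (no escaping mass).} This is the only place tightness enters, and I expect it to be the main point rather than a hard obstacle. Given $\varepsilon>0$, choose $M$ with $\mu_n([-M,M])\ge1-\varepsilon$ for all $n$. Take continuity points $x<-M$ and $y>M$ of $F$; these exist because $F$ has at most countably many discontinuities. Then $F(x)\le\varepsilon$ and $F(y)\ge1-\varepsilon$. Hence $F(-\infty)=0$ and $F(\infty)=1$, so $F$ is the distribution function of a probability measure $\mu$.

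\emph{Step 3 (weak convergence).} Fix $f$ bounded and continuous. Pick continuity points $a<-M$ and $b>M$ of $F$, and on $[a,b]$ approximate $f$ uniformly to within $\varepsilon$ by a step function $h$ whose jumps sit at continuity points of $F$. This is possible because $f$ is uniformly continuous on $[a,b]$ and continuity points are dense. By Step 1, $\int h\,d\mu_{n_j}\to\int h\,d\mu$. The contributions outside $[a,b]$ are at most $2\|f\|_\infty\varepsilon$ under every $\mu_{n_j}$ and under $\mu$. Letting $\varepsilon\to0$ gives $\int f\,d\mu_{n_j}\to\int f\,d\mu$, so $\mu_{n_j}\Rightarrow\mu$, as claimed.
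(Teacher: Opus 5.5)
Your proof is correct. It also takes a genuinely different route from the paper, because the paper gives no proof at all. It lists Prokhorov's theorem in its register of external facts, cites it from \citet{billingsley1999}, and uses it only once, in the attainment lemma (\cref{lem:attain}). There, tightness comes from Chebyshev: $\mu_n([-M,M]^c)\le1/M^2$.

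The cited theorem is the general metric-space statement. Its standard proof builds the limit measure from a content on a countable class of compact sets and never uses the order structure of $\R$. You instead specialize to the line and run the classical Helly selection argument:
\begin{enumerate}
\item diagonal extraction of $F_{n_j}$ along $\Q$;
\item the right-continuous regularization $F(x)=\inf_{r>x}G(r)$;
\item tightness to stop mass escaping to $\pm\infty$;
\item step-function approximation with breakpoints at continuity points of $F$.
\end{enumerate}

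The details you left implicit check out. In Step 1 the sandwich closes because $F(y)\le G(r)\le F(r)$ for $y<r$, so $G(r)\to F(x^-)=F(x)$ as $r\uparrow x$ at a continuity point. In Step 3 the mass of $\mu$ outside $(a,b]$ is $F(a)+1-F(b)\le2\varepsilon$ by your Step 2 bounds. The mass of $\mu_{n_j}$ there is at most $\varepsilon$, since $[-M,M]\subseteq(a,b]$.

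Your route buys self-containment in the spirit of the paper's register, which proves even the intermediate value theorem and real spectra inline. It also uses tightness in exactly the form \cref{lem:attain} supplies it. It does not remove the one genuinely measure-theoretic input: a nondecreasing right-continuous $F$ with limits $0$ and $1$ is the distribution function of a Borel probability measure (Lebesgue--Stieltjes, via Carath\'eodory extension). So it shrinks the external import rather than eliminating it. The paper's citation buys brevity and a generality the paper never uses.
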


\begin{fact}[Portmanteau]\label{fact:portmanteau}
If $\mu_n\Rightarrow\mu$, then $\limsup_n\mu_n(F)\le\mu(F)$ for every closed
$F\subseteq\R$, and $\liminf_n\int g\,d\mu_n\ge\int g\,d\mu$ for every bounded
continuous $g\ge0$.
\end{fact}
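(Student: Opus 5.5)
The plan is to prove both clauses directly from the definition of weak convergence, $\int f\,d\mu_n\to\int f\,d\mu$ for every bounded continuous $f$. The only device needed is approximating an indicator by continuous functions.

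\emph{Closed sets.} Fix a closed $F\subseteq\R$ and let $d(x,F)=\inf_{y\in F}|x-y|$. This function is $1$-Lipschitz, and it vanishes exactly on $F$ because $F$ is closed. For $k\ge1$ define
\[
f_k(x)=\max\bigl(0,\,1-k\,d(x,F)\bigr).
\]
Each $f_k$ is continuous, takes values in $[0,1]$, and satisfies $f_k\ge\mathbf 1_F$. The sequence $f_k$ decreases pointwise to $\mathbf 1_F$: on $F$ every $f_k$ equals $1$, and off $F$ we have $d(x,F)>0$, so $f_k(x)=0$ once $k>1/d(x,F)$. For each fixed $k$,
\[
\limsup_n\mu_n(F)\;\le\;\lim_n\int f_k\,d\mu_n\;=\;\int f_k\,d\mu.
\]
Letting $k\to\infty$, dominated convergence (with dominating function $1$) gives $\int f_k\,d\mu\downarrow\mu(F)$. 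This yields $\limsup_n\mu_n(F)\le\mu(F)$.

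\emph{Nonnegative continuous integrands.} For bounded continuous $g\ge0$, the definition of weak convergence already gives $\int g\,d\mu_n\to\int g\,d\mu$, so the $\liminf$ inequality holds with equality.

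The form actually useful for attainment arguments is for unbounded continuous $g\ge0$, such as $g(x)=x^4$. I would also record this extension, since it costs one line. Truncate: $g_M=\min(g,M)$ is bounded and continuous, so
\[
\liminf_n\int g\,d\mu_n\;\ge\;\lim_n\int g_M\,d\mu_n\;=\;\int g_M\,d\mu.
\]
Then let $M\uparrow\infty$ and apply monotone convergence to get $\liminf_n\int g\,d\mu_n\ge\int g\,d\mu$.

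There is no real obstacle here. The only points needing care are that closedness of $F$ is exactly what makes $f_k\downarrow\mathbf 1_F$ pointwise, rather than converging to the indicator of the closure, and the truncation step in the unbounded case.
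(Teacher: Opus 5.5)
Your argument is correct, and it is the standard textbook proof: the same approximation $(1-k\,d(x,F))^+$ of $\mathbf 1_F$ from above that Billingsley uses. The paper gives no proof of its own and imports the fact from Billingsley's monograph; your truncation remark, applied to $x^4\wedge M$ and followed by monotone convergence, is exactly how the paper uses the fact in \cref{lem:attain}, and that step is where the second clause has any content, since for bounded $g$ it is, as you note, an equality.
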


\begin{fact}[Convergence under uniform integrability]\label{fact:ui}
If $\mu_n\Rightarrow\mu$ and $f$ is continuous with
$\sup_n\int|f|^{1+\delta}\,d\mu_n<\infty$ for some $\delta>0$, then
$\int f\,d\mu_n\to\int f\,d\mu$.
\end{fact}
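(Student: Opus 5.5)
The statement is the convergence-under-uniform-integrability fact (\cref{fact:ui}). I would prove it by truncation: approximate $f$ by a bounded continuous function, use weak convergence on that piece, and control the remainders by the uniform $(1+\delta)$-moment bound. Set $C=\sup_n\int|f|^{1+\delta}\,d\mu_n<\infty$. Fix $M>0$ and define the continuous bounded clipping $f_M=\max(-M,\min(f,M))$. Then $|f-f_M|\le|f|\mathbf 1_{\{|f|>M\}}$.

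\textbf{Step 1 (uniform tail control).} On $\{|f|>M\}$ we have $|f|\le|f|^{1+\delta}/M^\delta$. Hence
\[
\Bigl|\int f\,d\mu_n-\int f_M\,d\mu_n\Bigr|\le C/M^\delta
\qquad\text{uniformly in } n .
\]

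\textbf{Step 2 (the limit has the same moment bound).} The functions $g_K=\min(|f|^{1+\delta},K)$ are bounded, continuous and nonnegative. By \cref{fact:portmanteau} (the $\liminf$ clause, or directly by weak convergence), $\int g_K\,d\mu\le\liminf_n\int g_K\,d\mu_n\le C$. Monotone convergence as $K\to\infty$ then gives $\int|f|^{1+\delta}\,d\mu\le C$. In particular $f$ is $\mu$-integrable, and the estimate of Step 1 holds for $\mu$ as well: $|\int f\,d\mu-\int f_M\,d\mu|\le C/M^\delta$.

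\textbf{Step 3 (assemble).} Since $f_M$ is bounded and continuous, $\int f_M\,d\mu_n\to\int f_M\,d\mu$ by the definition of weak convergence. The triangle inequality then gives
\[
\limsup_n\Bigl|\int f\,d\mu_n-\int f\,d\mu\Bigr|\le 2C/M^\delta .
\]
Letting $M\to\infty$ concludes the proof.

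The only step needing care is Step 2: one must check that the limit measure inherits integrability of $|f|^{1+\delta}$, since otherwise $\int f\,d\mu$ might not even be defined. Lower semicontinuity through the bounded truncations $g_K$ handles this. Everything else is routine.
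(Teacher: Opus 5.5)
Your proof is correct and complete. The clipping $f_M$ is bounded and continuous, and the pointwise bound $|f|\mathbf 1_{\{|f|>M\}}\le|f|^{1+\delta}/M^\delta$ gives the uniform remainder $C/M^\delta$; in passing it also shows that $f$ is $\mu_n$-integrable. The truncations $g_K$ together with monotone convergence transfer the moment bound to $\mu$, and the triangle inequality closes the argument.

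The paper gives no proof of this statement. It imports it as an external theorem, one of three facts summarizing the weak-convergence toolkit of \citet{billingsley1999}. The textbook route, with $X_n\sim\mu_n$ and $X\sim\mu$, goes through uniform integrability in three steps. The continuous mapping theorem gives $f(X_n)\Rightarrow f(X)$. The $(1+\delta)$-moment bound makes $\{f(X_n)\}$ uniformly integrable. A separate theorem then upgrades weak convergence plus uniform integrability to convergence of means.

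That route buys generality: uniform integrability alone suffices, with no $(1+\delta)$-moment needed. Your route is tailored to exactly the stated special case. It needs only the definition of weak convergence and monotone convergence, and it gives an explicit truncation error $2C/M^\delta$.

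Your Step 2 is the same truncate-then-monotone-convergence device the paper already uses for the fourth moment in the proof of \cref{lem:attain}. Your argument would therefore let the register drop this fact as an import at no cost. In that application ($f=x$ and $f=x^2$, with $\delta=1$) your constant $C$ is at most $\kappa$.
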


\noindent\emph{Used in:} the attainment lemma (\cref{lem:attain}) only ---
\cref{fact:portmanteau} applied to the truncations $x^4\wedge M$ (then $M\to\infty$ by
monotone convergence), \cref{fact:ui} to $f=x$ and $f=x^2$ (dominated via
$\E X^4\le\kappa$). Consequently the three facts reach exactly three results: the
strictness step of the proof-degree theorem (\cref{thm:proofdegree}, via attainment $+$
rigidity), the equality-class proposition (\cref{prop:eqclass}), and the attainment
clause of the margin bound (\cref{cor:margin}). The regime formulas
of \cref{thm:main-map,thm:twosided} and their tightness need no measure theory at all:
every upper bound is a pointwise polynomial inequality integrated against four moment
values, and every lower bound is an explicit finitely supported law.

\begin{fact}[Fundamental theorem of algebra, real form]\label{fact:fta}
Every nonconstant $p\in\R[x]$ factors as
$p(x)=c\prod_i(x-r_i)^{m_i}\prod_j q_j(x)$ with $c\in\R$, finitely many real roots
$r_i$ of multiplicities $m_i$, and monic quadratics $q_j$ without real roots (hence
positive definite on $\R$).
\end{fact}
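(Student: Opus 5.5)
This is a classical fact, so the plan is to derive the real form from the complex fundamental theorem of algebra by pairing conjugate roots. I will take as given the complex statement: every nonconstant polynomial in $\C[x]$ has a root in $\C$. By induction on the degree, dividing out $(x-z)$ each time, this gives $p(x)=c\prod_{k=1}^{n}(x-z_k)$ with $c$ the leading coefficient and $z_k\in\C$ listed with multiplicity. If a self-contained argument is wanted, I would prove existence of a complex root by the d'Alembert minimum-modulus argument. First, $|p(z)|\to\infty$ as $|z|\to\infty$, so $|p|$ attains a minimum on $\C$ by compactness of a large closed disc. Second, at any $z_0$ with $p(z_0)\neq0$, expand $p(z_0+w)=p(z_0)+a_kw^k+O(|w|^{k+1})$ with $a_k\ne0$. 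Choosing the argument of $w$ so that $a_kw^k$ points opposite to $p(z_0)$ strictly decreases $|p|$, so the minimum must be a zero. This root-existence step is the only genuinely analytic input and the main obstacle; everything after it is algebra.

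The real structure comes next. Since $p$ has real coefficients, $\overline{p(\bar z)}=p(z)$. Hence if $z$ is a root of multiplicity $m$, so is $\bar z$: compare multiplicities via derivatives, since $p^{(j)}$ also has real coefficients. Group the roots into the real ones $r_i$, with multiplicities $m_i$, and the nonreal ones, which come in conjugate pairs $(z,\bar z)$ with equal multiplicities. For each nonreal pair, form
\[
q(x)=(x-z)(x-\bar z)=x^2-2(\operatorname{Re}z)\,x+|z|^2 .
\]
This is a monic real quadratic with discriminant $4(\operatorname{Re}z)^2-4|z|^2=-4(\operatorname{Im}z)^2<0$. It therefore has no real roots, and $q(x)=(x-\operatorname{Re}z)^2+(\operatorname{Im}z)^2>0$ for all real $x$, which is the ``positive definite on $\R$'' clause.

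Collecting the factors gives $p(x)=c\prod_i(x-r_i)^{m_i}\prod_j q_j(x)$, with $c\in\R$ because $c$ is the leading coefficient of the real polynomial $p$. The number of factors is finite since it is bounded by $\deg p$. This completes the plan; the only care needed is bookkeeping of multiplicities when repeated conjugate pairs occur, which the derivative argument above handles.
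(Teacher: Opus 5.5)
Your proof is correct, and it is the standard derivation. There is no route in the paper to compare it against: the paper does not prove this statement. It lists it in \cref{app:register} as one of four imported external theorems, alongside the three weak-convergence facts, and uses it only inside \cref{lem:sos-line,lem:sos-ray}.

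So the comparison is between citing and proving. Your argument reduces the external input to complex root existence. With the d'Alembert step included, the input shrinks further to compactness of closed discs and existence of $k$-th roots in $\C$. In that step, state explicitly that $|w|$ is taken small enough for the $O(|w|^{k+1})$ term to be dominated by $|a_k w^k|$. Everything after that is algebra:
\begin{itemize}
\item The relation $p^{(j)}(\bar z)=\overline{p^{(j)}(z)}$ for every $j$ matches the multiplicities of $z$ and $\bar z$.
\item Each conjugate pair collapses to $(x-\operatorname{Re}z)^2+(\operatorname{Im}z)^2>0$.
\end{itemize}

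This is exactly the reasoning the paper uses without comment in the proof of \cref{lem:sos-line} (``over $\C$, its roots split into conjugate pairs''). Your argument is therefore the bridge between the complex form used there and the real form used in \cref{lem:sos-ray}.

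If you want to avoid the derivative bookkeeping for multiplicities, an equally short alternative is induction on degree. If $z\notin\R$ is a root, divide $p$ by the monic real quadratic $(x-z)(x-\bar z)$. The division algorithm over $\R$ gives a real quotient and a real remainder of degree at most $1$. That remainder vanishes at the nonreal point $z$, so it is identically zero. Repeated conjugate pairs are then handled automatically.
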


\noindent\emph{Used in:} the two sum-of-squares representation lemmas
(\cref{lem:sos-line,lem:sos-ray}) and, through them, nowhere else: the certificates of
the theorems are \emph{explicit}, so the representation lemmas are needed only for the
completeness direction (``degree 4 always suffices,'' \cref{rem:exactness}) and the
degree-$4$ sufficiency clause of \cref{thm:proofdegree}.

Nothing else external is used. In particular, no generalized-moment-problem duality
(\citealp{isii1962}; discussed in \cref{app:priorwork}) is invoked: strong duality is never
needed because each regime exhibits a matched primal--dual pair, and weak duality
(\cref{lem:weak-duality}) is proved inline in five lines.

\subsection{Auxiliary facts, with proofs}

\begin{fact}[Moment ordering]\label{fact:momord}
If $\E X^4\le\kappa<\infty$, then $\E|X|^k<\infty$ for $0\le k\le4$.
\end{fact}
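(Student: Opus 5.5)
The plan is to reduce everything to the single case $k=4$ and then use the elementary pointwise bound $|x|^k\le 1+x^4$, valid for $0\le k\le4$. I first establish this inequality on $\R$ by splitting into $|x|\le1$ and $|x|>1$.

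On $|x|\le 1$ we have $|x|^k\le 1$, since $k\ge0$; the case $k=0$ is read with the convention $|x|^0=1$. On $|x|>1$ the map $k\mapsto|x|^k$ is nondecreasing, so $|x|^k\le|x|^4=x^4$. In both cases $|x|^k\le\max(1,x^4)\le 1+x^4$.

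Next I apply the bound to the random variable itself. Substituting $x=X(\omega)$ gives the pointwise inequality $|X|^k\le 1+X^4$ of nonnegative measurable functions. Monotonicity of the expectation for nonnegative variables then yields
\[
\E|X|^k\le 1+\E X^4\le 1+\kappa<\infty.
\]

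This argument needs no Jensen or Lyapunov inequality and does not even require $\E X^2=1$. It therefore applies to every law with $\E X^4\le\kappa$, which is the form in which \cref{lem:weak-duality} uses the fact: there it guarantees that $\E q(X)$ is finite and splits linearly into moments. I see no real obstacle; the only point needing care is the convention at $k=0$ and the fact that expectations of nonnegative variables are always defined, so the comparison is legitimate before finiteness is known.

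An alternative route would be Lyapunov's inequality, $(\E|X|^k)^{1/k}\le(\E X^4)^{1/4}$. I will use the pointwise bound instead because it is self-contained.
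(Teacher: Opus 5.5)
Your proof is correct and follows the paper's argument: the pointwise bound $|x|^k\le 1+x^4$, checked separately on $|x|\le 1$ and $|x|>1$, then integrated to give $\E|X|^k\le 1+\kappa<\infty$. Your extra remarks, on the $k=0$ convention and on comparing expectations of nonnegative variables before finiteness is known, are harmless refinements of the same route.
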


\begin{proof}
$|x|^k\le1+x^4$ pointwise: for $|x|\le1$ the left side is at most $1$; for $|x|>1$ it is
at most $x^4$.
\end{proof}

\noindent\emph{Used in:} \cref{lem:weak-duality} (existence of all moments through
degree $4$), the uniform-integrability step of \cref{lem:attain}, and the vanishing
cross terms in the proof of \cref{lem:avgkurt}.

\begin{fact}[Variance nonnegativity and its equality case]\label{fact:jensen}
$\E X^4\ge(\E X^2)^2$, with equality if and only if $X^2$ is a.s.\ constant.
\end{fact}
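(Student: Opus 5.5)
We must prove Fact (variance nonnegativity and its equality case): $\E X^4\ge(\E X^2)^2$, with equality if and only if $X^2$ is almost surely constant.

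The plan is to apply the nonnegativity of a variance to the random variable $Y=X^2$, treating the finite and infinite cases separately.

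First we dispose of the degenerate case. If $\E X^4=\infty$, the inequality holds trivially, and equality cannot occur. Finiteness of $\E X^4$ forces $\E X^2<\infty$, by \cref{fact:momord} or by $x^2\le 1+x^4$. So there is nothing to prove about the equality case here.

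Now assume $\E X^4<\infty$ and set $m=\E X^2$, which is finite. We expand $\E(X^2-m)^2$ using linearity of expectation, all terms being integrable:
\[
\E(X^2-m)^2=\E X^4-2m\,\E X^2+m^2=\E X^4-(\E X^2)^2 .
\]
The left side is the expectation of a nonnegative random variable, so it is $\ge0$. This gives the inequality.

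For the equality case, suppose $\E X^4=(\E X^2)^2$. Then $\E(X^2-m)^2=0$. Since a nonnegative random variable with zero expectation vanishes almost surely, we get $X^2=m$ almost surely, so $X^2$ is a.s.\ constant.

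Conversely, suppose $X^2=c$ almost surely for a constant $c$. Then $c\ge0$, $\E X^2=c$, and $\E X^4=c^2$, so equality holds.

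No step presents a real obstacle. The only care needed is the integrability bookkeeping that justifies the linear expansion, and \cref{fact:momord} handles that.
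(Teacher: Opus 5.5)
Your argument for the case $\E X^4<\infty$ is the paper's proof: $\E X^4-(\E X^2)^2=\Var(X^2)=\E(X^2-\E X^2)^2\ge0$, and a vanishing variance forces $X^2=\E X^2$ almost surely.

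The one slip is in your preliminary branch. If $\E X^4=\infty$ and also $\E X^2=\infty$, both sides are $+\infty$. So ``equality cannot occur'' is false there, and the ``only if'' direction would fail, since $X^2$ is not a.s.\ constant. The fact is meant under $\E X^4<\infty$. The paper assumes this tacitly by writing $\Var(X^2)$, and it holds wherever the fact is applied (on $\Ckap$). You should simply restrict to that case rather than claim the infinite case is vacuous.
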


\begin{proof}
$\E X^4-(\E X^2)^2=\Var(X^2)\ge0$, and $\Var(Y)=\E(Y-\E Y)^2=0$ forces $Y=\E Y$ a.s.
\end{proof}

\noindent\emph{Used in:} the ``Jensen'' step of \cref{sec:prelim}: $\kappa\ge1$ is the
full parameter range, and $\CkapOf1$ is the symmetric two-point law on $\{\pm1\}$.

\begin{fact}[Real spectrum of symmetric matrices]\label{item:spec}
A real symmetric matrix has only real eigenvalues.
\end{fact}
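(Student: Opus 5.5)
The statement is the classical fact that a real symmetric matrix $A\in\R^{n\times n}$ has only real eigenvalues. I will use the standard Hermitian-form argument over $\C$ and do not expect any step to cause difficulty.

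Let $\lambda\in\C$ be an eigenvalue of $A$, regarded as a complex matrix. Pick a nonzero $v\in\C^n$ with $Av=\lambda v$, and form the scalar $\bar v^{\top}Av$. On one hand it equals $\lambda\,\bar v^{\top}v=\lambda\|v\|^2$. On the other hand, take its complex conjugate and use that $A$ is real. Since the quantity is $1\times1$, it also equals its own transpose. Together with $A^{\top}=A$ this gives
\[
\overline{\bar v^{\top}Av}\;=\;v^{\top}A\bar v\;=\;\bigl(v^{\top}A\bar v\bigr)^{\top}\;=\;\bar v^{\top}A^{\top}v\;=\;\bar v^{\top}Av .
\]
So $\bar v^{\top}Av$ is real.

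Since $\|v\|^2=\sum_i|v_i|^2>0$, it follows that $\lambda=\bar v^{\top}Av/\|v\|^2\in\R$. The only point needing care is that eigenvalues must first be taken over $\C$, which the fundamental theorem of algebra permits (\cref{fact:fta}, applied to the characteristic polynomial). Once $\lambda$ is known to be real, $A-\lambda I$ is a real singular matrix, so an eigenvector can also be chosen in $\R^n$. This real eigenvector is the form in which the fact is used later for the exact PSD criteria.
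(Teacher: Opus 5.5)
Your proof is correct and is essentially the paper's own argument. You evaluate $\bar v^{\top}Av$ at a complex eigenvector, observe that it equals both $\lambda\|v\|^2$ and its own complex conjugate (since $A$ is real and $A^{\top}=A$), and divide by $\|v\|^2>0$. Your closing remarks are harmless extras. The one inaccuracy is the claim about downstream use: the paper's coefficient criterion for $\det(G+sI)$ needs only that the eigenvalues are real, not a real eigenvector.
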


\begin{proof}
If $Gv=\lambda v$ with $0\ne v\in\C^n$, then $\bar v^\top Gv=\lambda\,\|v\|^2$; the left
side equals its own complex conjugate (as $G$ is real symmetric,
$\overline{\bar v^\top Gv}=v^\top G\bar v=\bar v^\top Gv$), so $\lambda$ is real.
\end{proof}

\noindent\emph{Used in:} the coefficient criterion (\cref{lem:charpoly}), i.e.\ the
verifier's second PSD check.

\begin{fact}[Degrees in iterated quadratic extensions]\label{item:tower}
Let $\Q=F_0\subseteq F_1\subseteq\dots\subseteq F_k$ with $F_{i+1}=F_i(\sqrt{d_i})$,
$d_i\in F_i$. Then $[F_k:\Q]$ is a power of $2$, and every $V\in F_k$ has algebraic
degree over $\Q$ dividing $[F_k:\Q]$ --- in particular, never $6$.
\end{fact}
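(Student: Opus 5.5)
The plan is to argue by induction on $k$ along the tower, using the tower law for field degrees. The base case $F_0=\Q$ gives $[F_0:\Q]=1=2^0$. For the inductive step, suppose $[F_i:\Q]=2^{e_i}$. Since $F_{i+1}=F_i(\sqrt{d_i})$, the element $\sqrt{d_i}$ is a root of $X^2-d_i\in F_i[X]$. Its minimal polynomial over $F_i$ divides this quadratic, so it has degree $1$ or $2$. Hence $[F_{i+1}:F_i]\in\{1,2\}$, and multiplicativity of degrees in towers, $[F_{i+1}:\Q]=[F_{i+1}:F_i]\,[F_i:\Q]$, gives $[F_{i+1}:\Q]\in\{2^{e_i},2^{e_i+1}\}$. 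This closes the induction, so $[F_k:\Q]$ is a power of $2$.

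For the second claim, take $V\in F_k$. Because $F_k$ is a finite extension of $\Q$, $V$ is algebraic, and $\Q\subseteq\Q(V)\subseteq F_k$. The algebraic degree of $V$ is $[\Q(V):\Q]$, the degree of its minimal polynomial, since $\Q(V)\cong\Q[X]/(m_V)$. Applying the tower law once more,
\[
[F_k:\Q]=[F_k:\Q(V)]\,[\Q(V):\Q],
\]
so $\deg V$ divides $[F_k:\Q]=2^{e}$. It follows that $\deg V$ is itself a power of $2$ and cannot equal $6$, because $3\mid 6$ while $3\nmid 2^{e}$.

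I expect no genuine obstacle here, since the ingredients are standard: finite extensions are algebraic, and $[\Q(V):\Q]=\deg m_V$. The one point to state carefully is that $d_i$ may already be a square in $F_i$. In that case the step has degree $1$ rather than $2$, which is why the induction tracks the degree as a power of $2$ and not as exactly $2^k$.
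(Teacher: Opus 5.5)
Your proof is correct and follows essentially the same route as the paper: each step $[F_{i+1}:F_i]\le 2$, multiplicativity of degrees gives a power of $2$, and the tower $\Q\subseteq\Q(V)\subseteq F_k$ gives the divisibility. The only differences are cosmetic. You bound each step via the minimal polynomial dividing $X^2-d_i$ and cite the tower law, whereas the paper uses the spanning set $\{1,\sqrt{d_i}\}$ and sketches the tower law inline.
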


\begin{proof}
$[F_{i+1}:F_i]\le2$, since $\{1,\sqrt{d_i}\}$ spans $F_{i+1}$ over $F_i$. Degrees
multiply in towers: if $\{a_p\}$ is an $F$-basis of $K$ and $\{b_q\}$ a $K$-basis of
$L$, the products $\{a_pb_q\}$ form an $F$-basis of $L$ (spanning is direct;
independence follows by collecting coefficients of the $b_q$). Hence
$[F_k:\Q]=\prod_i[F_{i+1}:F_i]$ is a power of $2$. For $V\in F_k$, the tower
$\Q\subseteq\Q(V)\subseteq F_k$ gives $[\Q(V):\Q]\mid[F_k:\Q]$, and the algebraic
degree of $V$ equals $[\Q(V):\Q]$.
\end{proof}

\noindent\emph{Used in:} the no-nested-square-root statement for Regime IIIb
(\cref{sec:endpoint} and the discussion): a value with an irreducible degree-$6$ minimal
polynomial over $\Q$ lies in no iterated quadratic extension, so no formula built from
rational operations and square roots can produce it at that rational parameter point.

\begin{fact}[Intermediate value theorem]\label{fact:ivt}
A continuous $f:[a,b]\to\R$ with $f(a)f(b)<0$ has a zero in $(a,b)$.
\end{fact}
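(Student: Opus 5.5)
The statement to be proved is the Intermediate Value Theorem (\cref{fact:ivt}): a continuous $f:[a,b]\to\R$ with $f(a)f(b)<0$ vanishes somewhere in $(a,b)$. I would prove it directly from the least-upper-bound property of $\R$.

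\emph{Reduction and the supremum.} Replacing $f$ by $-f$ if necessary, assume $f(a)<0<f(b)$. Define
\[
A=\{x\in[a,b]: f(x)<0\}.
\]
The set $A$ contains $a$ and is bounded above by $b$, so $c=\sup A$ exists in $[a,b]$.

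\emph{Excluding the endpoints.} Continuity at $a$ gives $\delta>0$ with $f<0$ on $[a,a+\delta)$, so $c\ge a+\delta>a$. Continuity at $b$ gives $\delta'>0$ with $f>0$ on $(b-\delta',b]$. Hence $A\subseteq[a,b-\delta']$, and so $c\le b-\delta'<b$.

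\emph{Showing $f(c)=0$.} I would rule out both strict signs.
\begin{itemize}
\item If $f(c)<0$, continuity gives $\eta>0$ with $f<0$ on $(c-\eta,c+\eta)\cap[a,b]$. Since $c<b$, this neighbourhood contains points of $A$ exceeding $c$, which contradicts $c=\sup A$.
\item If $f(c)>0$, continuity gives $\eta>0$ with $f>0$ on $(c-\eta,c]$. Then no point of $A$ lies in $(c-\eta,c]$. Since $f(c)>0$ also means $c\notin A$, every element of $A$ is at most $c-\eta$, so $c-\eta$ is an upper bound smaller than $\sup A$, again a contradiction.
\end{itemize}
Therefore $f(c)=0$ with $c\in(a,b)$.

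\emph{Main obstacle.} The only delicate point is the second bullet: one must check that no element of $A$ lies in $(c-\eta,c]$, including $c$ itself. The rest is routine epsilon–delta bookkeeping.
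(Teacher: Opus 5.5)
Your proof is correct, but it takes a different route from the paper's.

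The paper argues by bisection. It repeatedly halves the interval, keeping a half on which $f$ still changes sign, or stopping at a midpoint where $f$ vanishes. It then uses completeness in the form of the nested-interval property to extract a common point $c$ of intervals whose lengths tend to $0$. Finally, it observes that $f(c)\neq0$ would give $f$ a constant sign on a neighbourhood of $c$ containing some $[a_n,b_n]$, contradicting $f(a_n)f(b_n)<0$.

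You instead use completeness as the least-upper-bound property: you set $c=\sup\{x: f(x)<0\}$ and rule out each strict sign of $f(c)$ separately by one-sided continuity.

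What each route buys:
\begin{itemize}
\item Your route needs no sequences and no nested-interval lemma, and it singles out a particular zero, one with $f\ge0$ on $(c,b]$.
\item The paper's route is algorithmic: it locates a zero to within $(b-a)2^{-n}$ after $n$ halvings, which fits the computational spirit of the paper. Its final contradiction also handles both signs of $f(c)$ in one stroke, where you need two cases.
\end{itemize}

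A small streamlining of your write-up: the paragraph excluding the endpoints can be dropped. The condition $f(c)<0$ already forces $c\neq b$ (as $f(b)>0$), and $f(c)>0$ already forces $c\neq a$ (as $f(a)<0$). Once $f(c)=0$ is shown, $c\in(a,b)$ follows from $f(a)f(b)\neq0$.
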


\begin{proof}
Bisect: if the midpoint $m$ of $[a_n,b_n]$ has $f(m)=0$ we are done; otherwise one half
retains the sign change. The nested intervals have lengths $\to0$, so by completeness of
$\R$ they share a point $c$; if $f(c)\neq0$, continuity would give $f$ a constant sign on
a neighborhood of $c$, which contains some $[a_n,b_n]$ --- contradicting
$f(a_n)f(b_n)<0$.
\end{proof}

\noindent\emph{Used in:} the existence of the mixing parameter $\beta(\varepsilon)$ in
\cref{prop:eqclass}, and root locations in the regime proofs where a sign change is
exhibited.

\subsection{Machine-verified exact facts}

Three ingredients are finite computations carried out in exact arithmetic; they are
reproducible from the artifact (\cref{app:pipeline}) and are the only places where a
claim's evidence is a computation rather than a displayed argument. None of the hand
proofs of \cref{app:proofs} uses them: deleting all three would leave every proof in
this paper intact, at the price of exactly the three claims recorded under
\emph{load-bearing for} below.

\begin{mvfact}[Minimal polynomials in Regime IIIb]\label{mv:minpoly}
At $(t,\kappa)=(\tfrac12,2)$, the minimal polynomial of $\Vone$ over $\Q$ is the
irreducible degree-$6$ polynomial displayed in \cref{sec:endpoint}; at
$(t,\kappa)=(0.4,2.5)$, the minimal polynomial is likewise irreducible of degree $6$.
\end{mvfact}

\noindent\emph{Evidence:} exact elimination from \eqref{eq:systemS}, computed twice with
independent code (filtered resultants; lexicographic Gr\"obner basis), agreeing exactly
(\cref{app:minpoly}). \emph{Load-bearing for:} the ``no closed form of the Regime
I--IIIa kind'' statement, via \cref{item:tower} above.

\begin{mvfact}[Certified-instance verdicts]\label{mv:verdicts}
Each \textsc{tight} verdict in \cref{tab:instances} attests an exact certificate for the
stated value together with an exact extremal witness attaining it; each
\textsc{bound}($\varepsilon$) verdict attests an exact certificate for the stated value
plus $\varepsilon$, with $\varepsilon\le10^{-6}$.
\end{mvfact}

\noindent\emph{Evidence:} every attestation is a fresh-process run of the independent
checker on the certificate file (checks V1--V8, \cref{app:pipeline}), whose PSD steps
rest on the two criteria proved in \cref{app:psd}. \emph{Load-bearing for:} the
per-instance certification claims and nothing in the closed-form theorems (whose proofs
are in \cref{app:proofs}, independently of any certificate file).

\begin{mvfact}[IIIb solvability at grid points]\label{mv:solvability}
At every Regime IIIb grid point of \cref{tab:instances}, system \eqref{eq:systemS} is
solvable and its solution satisfies the inequalities of \cref{prop:IIIb}.
\end{mvfact}

\noindent\emph{Evidence:} high-precision solutions of \eqref{eq:systemS} with primal
$=$ dual agreeing to $30$ digits; the inequalities of \cref{prop:IIIb} checked exactly
at the rationalized data. \emph{Load-bearing for:} the solvability clause of
\cref{thm:main-map}(IIIb), which is stated as verified-where-evaluated (the honest
scope; see the discussion of limitations).

The symbolic identity battery (\texttt{scripts/verify\_theorem\_symbolic.py}, tags quoted
throughout \cref{app:proofs}) is deliberately \emph{not} on this list: every identity it
checks also carries a hand proof in \cref{app:proofs} or \cref{app:worked}, so it is
redundant assurance --- a second, mechanical reader --- rather than a dependency. The
trusted base of the machine layer, stated once: sympy's exact rational and algebraic
arithmetic, plus the $\sim$370 audited lines of \texttt{forge/verify.py}.

\section{Deferred proofs}
\label{app:proofs}

Throughout the appendix, $u=\sqrt{\kappa-1}$ and $s=\sqrt{\kappa+3}$, so $s^2-u^2=4$,
$b=b(\kappa)=(s-u)/2$, $c=c(\kappa)=(s+u)/2$, $bc=1$, $b+c=s$, $c-b=u$. Every polynomial
identity asserted below is also machine-checked symbolically (script
\texttt{scripts/verify\_theorem\_symbolic.py}, tag in brackets), but each admits a one-line
hand verification by expansion.

\subsection{The two representation facts, proved}
\label{app:mlfacts}

The paper's certificates rest on exactly two classical facts about univariate polynomials
(used in \cref{def:cert} and \cref{rem:exactness}). For completeness we prove both; nothing
else in the paper is imported. Throughout, ``SOS'' means a finite sum of squares of real
polynomials.

\begin{lemma}[Nonnegativity on $\R$]\label{lem:sos-line}
If $p\in\R[x]$ satisfies $p\ge0$ on $\R$, then $p=g_1^2+g_2^2$ for some $g_1,g_2\in\R[x]$
with $\deg g_i\le\tfrac12\deg p$.
\end{lemma}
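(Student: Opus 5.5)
We will prove \cref{lem:sos-line} by factoring $p$ over $\C$ and grouping the factors into a product of a square and sums of two squares. The one algebraic tool is the Brahmagupta--Fibonacci (complex-norm) identity: a product of sums of two squares is again a sum of two squares.

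\textbf{Trivial cases.} If $p\equiv0$, take $g_1=g_2=0$. If $p$ is a nonzero constant, then $p>0$, and we take $g_1=\sqrt p$, $g_2=0$.

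\textbf{Factor $p$.} Otherwise apply \cref{fact:fta}:
\[
p(x)=c\prod_i(x-r_i)^{m_i}\prod_j q_j(x).
\]
The argument then proceeds in three steps.

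\emph{Real roots.} Each multiplicity $m_i$ is even. Suppose some $m_i$ were odd. Near $r_i$ the remaining factors have constant nonzero sign, because they are continuous and nonvanishing at $r_i$. So $p$ would change sign across $r_i$, contradicting $p\ge0$. Hence $\prod_i(x-r_i)^{m_i}=h(x)^2$ with $h=\prod_i(x-r_i)^{m_i/2}\in\R[x]$.

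\emph{Leading constant.} We have $c>0$. Each $q_j$ is positive on $\R$, so evaluating $p$ at any point that is not a real root shows that $c$ has the sign of $p$ there, namely positive.

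\emph{Complex roots.} Each monic quadratic with no real roots can be written as
\[
q_j(x)=(x-\alpha_j)^2+\beta_j^2,\qquad \beta_j\ne0,
\]
by completing the square. Equivalently, $q_j=|x-z_j|^2$ for $x\in\R$, where $z_j=\alpha_j+i\beta_j$. Set
\[
F(x)=\prod_j(x-z_j)=A(x)+iB(x),\qquad A,B\in\R[x].
\]
For real $x$,
\[
\prod_j q_j(x)=|F(x)|^2=A(x)^2+B(x)^2,
\]
and since both sides are polynomials agreeing on $\R$, this is a polynomial identity. This is the Brahmagupta--Fibonacci identity packaged through complex conjugation.

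\textbf{Assembling.} Put $g_1=\sqrt c\,hA$ and $g_2=\sqrt c\,hB$. Then
\[
p=c\,h^2(A^2+B^2)=g_1^2+g_2^2.
\]
For the degree bound, $\deg p=2\deg h+2\deg F$, while
\[
\deg g_i\le\deg h+\max(\deg A,\deg B)=\deg h+\deg F=\tfrac12\deg p.
\]
Here $\deg A\le\deg F$ and $\deg B\le\deg F$ hold because $A$ and $B$ are the real and imaginary parts of $F$ coefficientwise.

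\textbf{Main obstacle.} The only real care point is the parity argument for real roots, which needs a clean local sign statement. I would give it explicitly: write $p=(x-r_i)^{m_i}\tilde p$ with $\tilde p(r_i)\ne0$. By continuity, $\tilde p$ keeps one sign on a small interval around $r_i$. Then $(x-r_i)^{m_i}$ with $m_i$ odd takes opposite signs on the two sides of $r_i$, so $p$ is negative somewhere near $r_i$.

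Everything else is bookkeeping. In particular, no appeal to Markov--Lukács is needed, consistent with the paper's register.
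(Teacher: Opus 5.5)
Your proof is correct and is essentially the paper's argument. The paper sets $g=\sqrt{\ell}\,\prod_j(x-z_j)\prod_i(x-r_i)^{m_i}$ and takes $g_1,g_2$ to be its real and imaginary parts; since $\sqrt{c}\,h$ is real, these are exactly your $\sqrt{c}\,hA$ and $\sqrt{c}\,hB$. You also make explicit two points the paper only asserts: the zero-polynomial case and the positivity of the leading coefficient.
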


\begin{proof}
$p$ has even degree and positive leading coefficient $\ell$. Over $\C$, its roots split into
conjugate pairs $z_j,\bar z_j$ and real roots $r_i$, each of even multiplicity $2m_i$ (at a
real root of odd multiplicity $p$ would change sign). Set
$g=\sqrt{\ell}\,\prod_j(x-z_j)\prod_i(x-r_i)^{m_i}\in\C[x]$ and write $g=g_1+ig_2$ with
$g_1,g_2\in\R[x]$. Then $p=g\bar g=|g|^2=g_1^2+g_2^2$, and $\deg g=\tfrac12\deg p$.
\end{proof}

\begin{lemma}[Nonnegativity on a half-line]\label{lem:sos-ray}
If $p\in\R[x]$ satisfies $p\ge0$ on $[a,\infty)$, then
\[
p\;=\;\sigma_0+(x-a)\,\sigma_1,\qquad \sigma_0,\sigma_1\ \text{SOS},\quad
\deg\sigma_0\le\deg p,\ \deg\sigma_1\le\deg p-1 .
\]
\end{lemma}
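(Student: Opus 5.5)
I will prove \cref{lem:sos-ray} by reducing to \cref{lem:sos-line} after factoring $p$ over $\R$ with \cref{fact:fta}. By translation $x\mapsto x+a$ I may assume $a=0$, so $p\ge0$ on $[0,\infty)$; the representation transports back unchanged, since squares stay squares and $x$ becomes $x-a$. The case $p\equiv0$ is trivial. If $p$ is a nonzero constant, it is nonnegative, and I take $\sigma_0=p$, $\sigma_1=0$.

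\textbf{Factorization step.} Write
\[
p=c\prod_i(x-r_i)^{m_i}\prod_j q_j(x),
\]
where each $q_j$ is a monic quadratic with no real roots. Each $q_j$ is a sum of two squares, $(x-\alpha)^2+\beta^2$. I sort the real roots into three groups.
\begin{itemize}
\item Roots $r_i>0$ must have even multiplicity, since otherwise $p$ changes sign at $r_i$ inside the ray. Each factor $(x-r_i)^{m_i}$ is then a square.
\item Roots $r_i<0$ give factors $x-r_i=x+|r_i|$, which are of the form $\sigma+x\tau$ with $\sigma=|r_i|\ge0$ and $\tau=1$.
\item A root at $0$ contributes $x^{m}$. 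I write this as a square times $x$ when $m$ is odd, and as a square when $m$ is even.
\end{itemize}
The leading coefficient satisfies $c>0$, since $p\to+\infty$ sign-wise on large $x$, where $p\ge0$.

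\textbf{Closure step.} The key observation is that the set $\mathcal M=\{\sigma_0+x\sigma_1:\sigma_0,\sigma_1\ \text{SOS}\}$ is closed under multiplication:
\[
(\sigma_0+x\sigma_1)(\tau_0+x\tau_1)=(\sigma_0\tau_0+x^2\sigma_1\tau_1)+x(\sigma_0\tau_1+\sigma_1\tau_0),
\]
and products and sums of SOS are SOS. Every factor listed above lies in $\mathcal M$: squares, the positive definite $q_j$, the linear factors $x+|r_i|$, the factor $x$ itself, and $c>0$. Hence $p\in\mathcal M$.

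\textbf{Degree step.} This is the only point needing care, because a priori the SOS pieces could have high degree with cancellation. I will track degrees inductively. I claim that each factor of degree $d$ is represented with $\deg\sigma_0\le d$ and $\deg\sigma_1\le d-1$, where an absent piece counts as degree $-\infty$. Under the product formula, the degree of $\sigma_0\tau_0$ is at most $d+e$ and the degree of $x^2\sigma_1\tau_1$ is at most $2+(d-1)+(e-1)$, also at most $d+e$; similarly the $\sigma_1$-part has degree at most $d+e-1$. Since all terms are SOS with nonnegative leading coefficients, no cancellation is even needed for the bounds. The base factors satisfy the claim directly: $x$ has $\sigma_0=0$ and $\sigma_1=1$, and $x+|r|$ has $\sigma_0=|r|$ and $\sigma_1=1$. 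Translating back gives the stated bounds $\deg\sigma_0\le\deg p$ and $\deg\sigma_1\le\deg p-1$.

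I expect no real obstacle here. The only delicate point is ensuring that the multiplicity argument for roots inside the ray is correct, which covers positive roots and not a root at the endpoint $0$, handled separately above.
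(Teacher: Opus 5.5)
Your proposal is correct and follows essentially the same route as the paper: translate to $a=0$, show that $\{\sigma_0+x\sigma_1:\sigma_0,\sigma_1\ \text{SOS}\}$ is closed under multiplication, and place each real factor of $p$ in that set (the positive leading coefficient, the positive-definite quadratics, the even-multiplicity roots in $(0,\infty)$, the linear factors $x+|r|$ for negative roots, and $x^m$ at the endpoint); then track the two degree bounds through the products. The one cosmetic difference is that you announce a reduction to \cref{lem:sos-line}, which your argument never actually invokes --- completing the square on each irreducible quadratic is all you use, exactly as in the paper.
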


\begin{proof}
Shifting $x\mapsto x+a$, take $a=0$. Consider the set
$K=\{\sigma_0+x\,\sigma_1:\sigma_0,\sigma_1\ \text{SOS}\}$. $K$ is closed under
multiplication:
\[
(\sigma_0+x\sigma_1)(\tau_0+x\tau_1)
=\underbrace{\bigl(\sigma_0\tau_0+x^2\,\sigma_1\tau_1\bigr)}_{\text{SOS}}
+x\,\underbrace{\bigl(\sigma_0\tau_1+\sigma_1\tau_0\bigr)}_{\text{SOS}} .
\]
Now factor $p$ over $\R$ into its leading coefficient $\ell$, positive-definite quadratic
factors, and real-root factors. Since $p(x)\ge0$ for large $x$, $\ell>0$, so
$\ell\in K$. Each positive-definite quadratic is a sum of two squares
(complete the square), hence in $K$. Real roots $r>0$ must have even multiplicity (else $p$
changes sign inside $(0,\infty)$), contributing squares $(x-r)^{2m}\in K$. Real roots
$r<0$ contribute factors $(x-r)=x+|r|=\underbrace{|r|}_{\sigma_0}+x\cdot\underbrace{1}_{\sigma_1}\in K$.
A root at $0$ of multiplicity $m$ contributes $x^m$, which is $(x^{m/2})^2\in K$ for $m$
even and $x\cdot(x^{(m-1)/2})^2\in K$ for $m$ odd. As a product of elements of $K$,
$p\in K$. The degree bounds follow by tracking the two components through each product:
if the factors' components obey them, so do the product's, since
$\deg(\sigma_0\tau_0+x^2\sigma_1\tau_1)\le\deg(pq)$ and
$\deg(\sigma_0\tau_1+\sigma_1\tau_0)\le\deg(pq)-1$.
\end{proof}

Mirroring $x\mapsto-x$ gives the analogous statement on $(-\infty,a]$ with multiplier
$(a-x)$. These two lemmas are the ``sound direction'' our proofs and our verifier use
(a displayed decomposition certifies nonnegativity); they are also the ``complete
direction'' behind \cref{rem:exactness} (every valid degree-$4$ bound has a certificate in
this form), which is what makes the degree-$4$ relaxation, and hence the map, exact.
The interval case $[a,b]$ (used only by the tight Paley--Zygmund benchmark certificate) is
analogous (Markov--Luk\'acs; \citealp{nesterov2000}), and the concrete decomposition used
there is displayed in the certificate itself and re-checked by the verifier.

\subsection{The tongue equivalence}

\begin{lemma}\label{lem:tongue-equiv}
For $t>0$ and $\kappa\ge1$: $\ \kappa\ge\kc(t)\iff b(\kappa)\le t\le c(\kappa)$.
Moreover $\kc(b(\kappa))=\kc(c(\kappa))=\kappa$.
\end{lemma}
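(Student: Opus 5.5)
The plan is to clear denominators and factor. Since $t>0$, multiply $\kappa\ge\kc(t)=t^2-1+1/t^2$ by $t^2$. This shows the inequality is equivalent to
\[
t^4-(\kappa+1)t^2+1\;\le\;0 .
\]
This is a quadratic in $T=t^2$ with sum of roots $\kappa+1$ and product of roots $1$. By the notation paragraph of \cref{sec:prelim} we have $b^2+c^2=\kappa+1$ and $b^2c^2=(bc)^2=1$. Hence the quadratic factors as $(T-b^2)(T-c^2)$, that is,
\[
t^4-(\kappa+1)t^2+1=(t^2-b(\kappa)^2)(t^2-c(\kappa)^2).
\]

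To keep the argument self-contained, I will recheck these two identities in the surd shorthand $u=\sqrt{\kappa-1}$, $s=\sqrt{\kappa+3}$, where $b=(s-u)/2$ and $c=(s+u)/2$.
\begin{itemize}
\item $bc=(s^2-u^2)/4=4/4=1$.
\item $b^2+c^2=(s^2+u^2)/2=\bigl((\kappa+3)+(\kappa-1)\bigr)/2=\kappa+1$.
\end{itemize}
Both are one-line expansions. Also $0<b\le c$, because $u\ge0$ and $s>u$ (as $s^2-u^2=4$).

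With the factorization in hand, a product of two real factors is $\le 0$ exactly when $b^2\le t^2\le c^2$, using $b^2\le c^2$. Since $t>0$ and $b,c>0$, taking positive square roots gives $b\le t\le c$. Every step is reversible, which yields the equivalence.

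For the ``moreover'' clause, evaluate the factorization at $t=b$ and at $t=c$. Each makes the quartic vanish, so $t^4-(\kappa+1)t^2+1=0$ there. Dividing by $t^2>0$ gives $t^2-1+1/t^2=\kappa$, i.e. $\kc(b)=\kc(c)=\kappa$. There is no real obstacle here. The only point needing care is the positivity bookkeeping: multiplying by $t^2$, and taking square roots only of nonnegative quantities with $t,b,c>0$. The degenerate case $\kappa=1$, where $b=c=1$ and the interval collapses to $\{1\}$, is covered by the same computation with $u=0$.
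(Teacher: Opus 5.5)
Your proof is correct and follows the paper's argument essentially verbatim: you multiply through by $t^2$ to get $t^4-(\kappa+1)t^2+1\le 0$, factor it as $(t^2-b^2)(t^2-c^2)$ via $b^2c^2=1$ and $b^2+c^2=\kappa+1$, and read off $t\in[b,c]$ using $t>0$. Your ``moreover'' step, where the quartic vanishes at $t=b$ and $t=c$ and you then divide by $t^2>0$ (with $b>0$ correctly checked), is the paper's ``by substitution'' spelled out.
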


\begin{proof}
$\kappa\ge\kc(t)=t^2-1+t^{-2}$ is equivalent to $t^4-(\kappa+1)t^2+1\le0$. The quadratic (in
$t^2$) has roots $b^2,c^2$: indeed $b^2c^2=(bc)^2=1$ and
$b^2+c^2=\tfrac{(s-u)^2+(s+u)^2}{4}=\tfrac{s^2+u^2}{2}=\kappa+1$, so
\[
t^4-(\kappa+1)t^2+1\;=\;(t^2-b^2)(t^2-c^2),
\]
which is $\le0$ iff $t^2\in[b^2,c^2]$ iff
$t\in[b,c]$ ($t>0$). The last claim follows by substitution [\texttt{kappa\_c(b(kappa)) ==
kappa}].
\end{proof}

\subsection{Attainment and rigidity}

The attainment lemma is the one place the paper uses measure-theoretic machinery: the
three standard weak-convergence facts of \citet{billingsley1999}, stated as
\cref{fact:prokhorov,fact:portmanteau,fact:ui} in the register (\cref{app:register}).

\begin{lemma}[Attainment]\label{lem:attain}
For every $t>0$, $\kappa\ge1$, the supremum $\Vone(t,\kappa)$ is attained by some
$\mu^\star\in\Ckap$.
\end{lemma}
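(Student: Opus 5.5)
I will prove attainment by the direct method: take a maximizing sequence, extract a weakly convergent subsequence, and check that the limit stays in $\Ckap$ without losing tail mass. The tools are exactly the three weak-convergence facts of the register (\cref{fact:prokhorov,fact:portmanteau,fact:ui}), used in the way their audit note describes.

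\textbf{Maximizing sequence and tightness.} The class $\Ckap$ is nonempty for $\kappa\ge1$, since it contains the uniform law on $\{\pm1\}$. So there are $\mu_n\in\Ckap$ with $\mu_n([t,\infty))\to\Vone(t,\kappa)$. By Chebyshev, $\mu_n(|x|>R)\le \int x^2\,d\mu_n/R^2=1/R^2$ uniformly in $n$, so the sequence is tight. \cref{fact:prokhorov} then gives a subsequence, not relabeled, with $\mu_n\Rightarrow\mu^\star$ for some probability measure $\mu^\star$.

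\textbf{The limit stays in the class.}
\begin{itemize}
\item \emph{Fourth moment.} For each $M$, the function $x^4\wedge M$ is bounded, continuous and nonnegative. \cref{fact:portmanteau} gives $\int (x^4\wedge M)\,d\mu^\star\le\liminf_n\int(x^4\wedge M)\,d\mu_n\le\kappa$. Letting $M\to\infty$ by monotone convergence yields $\int x^4\,d\mu^\star\le\kappa$.
\item \emph{First two moments.} Take $f(x)=x$ or $f(x)=x^2$ with $\delta=1$. Then $\int|f|^{2}\,d\mu_n\le\int(1+x^4)\,d\mu_n\le 1+\kappa$, using the pointwise bound of \cref{fact:momord}. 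So \cref{fact:ui} applies and gives $\int x\,d\mu^\star=\lim\int x\,d\mu_n=0$ and $\int x^2\,d\mu^\star=1$.
\end{itemize}
Hence $\mu^\star\in\Ckap$.

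\textbf{Tail mass.} The event $[t,\infty)$ is closed, so the portmanteau upper bound (\cref{fact:portmanteau}) gives $\mu^\star([t,\infty))\ge\limsup_n\mu_n([t,\infty))=\Vone(t,\kappa)$. The reverse inequality holds because $\mu^\star$ is feasible. Therefore $\mu^\star$ attains the supremum.

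The only delicate point is preserving the second-moment equality $\E X^2=1$ in the limit. Lower semicontinuity alone would give only $\le1$, and mass escaping to infinity could in principle carry variance away. The uniform fourth-moment bound is exactly what rules this out, via uniform integrability of $x^2$. This is also why closedness of the event matters: it lets the limsup inequality point the right way for a supremum.
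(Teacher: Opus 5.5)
Your proof is correct and follows the paper's argument essentially step for step. The shared steps are: a maximizing sequence made tight by Chebyshev; Prokhorov for a weak limit; the uniform-integrability fact, backed by the fourth-moment bound, to carry $\E X=0$ and $\E X^2=1$ to the limit; portmanteau on the truncations $x^4\wedge M$ plus monotone convergence for $\E X^4\le\kappa$; and the closed-set portmanteau inequality for the tail mass.
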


\begin{proof}
Let $(\mu_n)\subset\Ckap$ with $\mu_n([t,\infty))\to\Vone$. By Chebyshev,
$\mu_n([-M,M]^c)\le1/M^2$, so the sequence is tight; pass to a weakly convergent subsequence
$\mu_n\Rightarrow\mu$ (\cref{fact:prokhorov}). The families $\{x\}$ and $\{x^2\}$ are uniformly integrable under
$\mu_n$ (dominated via $\E X^4\le\kappa$), so by \cref{fact:ui} $\E_\mu X=\lim\E_{\mu_n}X=0$ and
$\E_\mu X^2=\lim\E_{\mu_n}X^2=1$; and $\E_\mu X^4\le\liminf\E_{\mu_n}X^4\le\kappa$ by
\cref{fact:portmanteau} applied to the truncations $x^4\wedge M$, then $M\to\infty$ by
monotone convergence. Hence $\mu\in\Ckap$. Since
$[t,\infty)$ is closed, $\mu([t,\infty))\ge\limsup_n\mu_n([t,\infty))=\Vone$, and $\le$ holds
by definition. 
\end{proof}

\begin{lemma}[Rigidity of the Cantelli bound]\label{lem:rigidity}
Let $t>0$ and let $\mu$ have mean $0$ and variance $1$. If $\mu([t,\infty))=1/(1+t^2)$, then
$\mu=\mu_C=\tfrac1{1+t^2}\delta_t+\tfrac{t^2}{1+t^2}\delta_{-1/t}$; in particular
$\E_\mu X^4=\kc(t)$.
\end{lemma}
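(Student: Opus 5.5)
The plan is to integrate the Regime I certificate against $\mu$ and read off equality cases. Set
\[
q(x)=\frac{(x+1/t)^2}{(t+1/t)^2}=\frac{(tx+1)^2}{(1+t^2)^2}.
\]
Then $q\ge0$ on $\R$. On $[t,\infty)$ we have $q\ge1$, by the explicit identity
\[
(tx+1)^2-(1+t^2)^2=t^2(x-t)^2+2t(1+t^2)(x-t)
\]
from the Cantelli walk-through in \cref{app:background}. Hence $\mathbf 1_{[t,\infty)}\le q$ pointwise. Since $\mu$ has mean $0$ and variance $1$, and $q$ has degree $2$, only two moments are needed:
\[
\int q\,d\mu=\frac{t^2\cdot1+2t\cdot0+1}{(1+t^2)^2}=\frac1{1+t^2}.
\]
The hypothesis $\mu([t,\infty))=1/(1+t^2)$ therefore gives
\[
\int\bigl(q-\mathbf 1_{[t,\infty)}\bigr)\,d\mu=0 .
\]
The integrand is nonnegative and Borel, so it vanishes $\mu$-a.e.

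Next I locate the zero set $Z=\{x:q(x)=\mathbf 1_{[t,\infty)}(x)\}$.
\begin{itemize}
\item On $(-\infty,t)$ the condition is $q=0$, i.e.\ $x=-1/t$, which lies in $(-\infty,t)$ since $t>0$.
\item On $[t,\infty)$ the condition is $q=1$. Here $t^2(x-t)^2+2t(1+t^2)(x-t)$ has both terms nonnegative for $x\ge t$, and the second is strictly positive for $x>t$. So the only solution is $x=t$.
\end{itemize}
Thus $Z=\{-1/t,\,t\}$ and $\mu$ is supported on these two points. Write $\mu=\alpha\delta_t+(1-\alpha)\delta_{-1/t}$. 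The mean condition $\alpha t-(1-\alpha)/t=0$ forces $\alpha=1/(1+t^2)$, so $\mu=\mu_C$. Its fourth moment is
\[
\frac{t^4+t^2\cdot t^{-4}}{1+t^2}=\frac{t^4+t^{-2}}{1+t^2}=\frac{t^6+1}{t^2(1+t^2)}=\frac{t^4-t^2+1}{t^2}=\kc(t),
\]
using $t^6+1=(t^2+1)(t^4-t^2+1)$.

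There is no real obstacle here. The only point needing care is that the equality-case step uses just the first two moments, so no fourth moment is assumed and the integral of $q$ is finite. One should also check that strict positivity of $q-\mathbf 1$ off $Z$ holds everywhere, including near $x\to-\infty$ (where $q>0$ away from $-1/t$). This rules out any mass escaping to other points.
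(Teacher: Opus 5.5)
Your proposal is correct and is essentially the paper's proof: integrate the Cantelli certificate $q_C=(x+1/t)^2/(t+1/t)^2$ against $\mu$ using only the first two moments, and use equality to force $\mu$ onto the touching set $\{-1/t,\,t\}$. The mean-zero condition then pins the weights to those of $\mu_C$. Your explicit check that $\E_{\mu_C}X^4=\kc(t)$, via $t^6+1=(t^2+1)(t^4-t^2+1)$, fills in the one computation the paper leaves to the reader.
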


\begin{proof}
With $q_C=(x+1/t)^2/(t+1/t)^2$ we have $\mathbf1_{[t,\infty)}\le q_C$ pointwise and
$\E_\mu q_C=1/(1+t^2)$ (this uses only the first two moments). Equality
$\E_\mu\mathbf1_{[t,\infty)}=\E_\mu q_C$ forces $\mu\{q_C=\mathbf1_{[t,\infty)}\}=1$. On
$[t,\infty)$, $q_C(x)=1$ only at $x=t$; on $(-\infty,t)$, $q_C(x)=0$ only at $x=-1/t$. So
$\operatorname{supp}\mu\subseteq\{t,-1/t\}$, and mass one plus mean zero pin the weights to
those of $\mu_C$.
\end{proof}

\subsection{Proof of the map}
\label{app:proofmap}

\RestateTheorem{thm:main-map}{The exact tail map of the bounded-kurtosis class}{\ThmMainMapBody}

\begin{proof}
Each regime is proved by a matched pair (\cref{lem:weak-duality} for the upper bound, an
explicit law in $\Ckap$ for the lower). We verify, for each regime: (a) the certificate's
membership and multiplier signs; (b) the two pointwise inequalities
\eqref{eq:cert-glob}--\eqref{eq:cert-event} with explicit SOS data; (c) the value identity
$\E q=\lambda_0+\lambda_2+\lambda_4\kappa=$ claimed bound; (d) witness feasibility and
tightness.

\medskip\noindent\textbf{Regime I} ($b\le t\le c$, i.e.\ $\kappa\ge\kc(t)$ by
\cref{lem:tongue-equiv}). Take Cantelli's certificate
$q(x)=\dfrac{(x+1/t)^2}{(t+1/t)^2}$, i.e.\ $\lambda_4=0$ (a legal multiplier for the
$\le$-constraint (A3)).
(a) $q\in\operatorname{span}\{1,x,x^2\}$. (b) $q\ge0$ is a square, and
\[
q-1=\frac{(x-t)\bigl(x+t+2/t\bigr)}{(t+1/t)^2}
=\frac{(x-t)^2+(2t+2/t)(x-t)}{(t+1/t)^2}\;\ge\;0
\quad\text{on }[t,\infty),
\]
a Markov--Luk\'acs form
with nonnegative coefficients [\texttt{I: q-1 factorization}, \texttt{I: ML pieces
identity}]. (c) $\E q=(1/t^2+1)/(t+1/t)^2=1/(1+t^2)$ under (A1)--(A2) [\texttt{I: value
identity}]. (d) $\mu^\star=\tfrac{1}{1+t^2}\delta_t+\tfrac{t^2}{1+t^2}\delta_{-1/t}$ has mean
$0$, variance $1$ [\texttt{I: witness mean/var}] and fourth moment exactly $\kc(t)\le\kappa$
[\texttt{I: witness kurtosis}], so $\mu^\star\in\Ckap$; its tail mass at $[t,\infty)$ is
$\tfrac1{1+t^2}$ (the atom at $t$; $-1/t<t$). Both bounds meet at $1/(1+t^2)$.

\medskip\noindent\textbf{Regime II} ($t\ge c(\kappa)$, $(t,\kappa)\neq(1,1)$; then $t\ge1$,
and $t>1$ unless $\kappa=1$). Let $p$ be as in \eqref{eq:regimeII} and
$u_\ast=\tfrac{1-pt^2}{1-p}$ (written $u$ in the statement; we add the star to avoid clashing
with $u=\sqrt{\kappa-1}$). Sign facts, each an explicit identity:
\begin{itemize}
\item $1-p=\dfrac{(t^2-1)^2}{(t^2-1)^2+\kappa-1}\in(0,1)$ for $t>1$;
\item $t^2-u_\ast=\dfrac{t^2-1}{1-p}>0$ [\texttt{II: t\^{}2-u == ...}];
\item $1-pt^2$ has numerator $(t^2-1)^2-(\kappa-1)(t^2-1)=(t^2-1)(t^2-\kappa)$ over the
positive denominator $(t^2-1)^2+\kappa-1$; and $t\ge c(\kappa)$ gives $t^2\ge c^2\ge\kappa$
(as $c^2-\kappa=\tfrac{1-\kappa+\sqrt{(\kappa+1)^2-4}}{2}\ge0$ because
$(\kappa+1)^2-4-(\kappa-1)^2=4(\kappa-1)\ge0$ [\texttt{II: c(kappa)\^{}2...}]); hence
$u_\ast\ge0$ and $a=\sqrt{u_\ast}$ is real with $a<t$;
\item $w_+\ge0$: $(1-p)a\ge pt\iff(1-p)^2u_\ast\ge p^2t^2\iff1-p(1+t^2)\ge0\iff
p\le\tfrac1{1+t^2}\iff\kappa\le\kc(t)$ [\texttt{II: (1-p)\^{}2 u - p\^{}2 t\^{}2 ==
1 - p(1+t\^{}2)}, \texttt{II: 1/(1+t\^{}2) - p has numerator t\^{}2(kappa\_c - kappa)}],
which holds on $t\ge c(\kappa)$ by \cref{lem:tongue-equiv}.
\end{itemize}
Certificate: $q(x)=\dfrac{(x^2-u_\ast)^2}{(t^2-u_\ast)^2}$, so
$\lambda_4=(t^2-u_\ast)^{-2}>0$, $\lambda_1=\lambda_3=0$ (no skewness multiplier, as
required). (b) $q\ge0$ is a square. For \eqref{eq:cert-event}:
\[
q(x)-1=\frac{(x^2-t^2)\,(x^2+t^2-2u_\ast)}{(t^2-u_\ast)^2}
\qquad\text{[\texttt{II: q-1 factorization}]},
\]
and with $y=x-t\ge0$,
$(q-1)(t^2-u_\ast)^2=y\,r(y)$ where
\[
r(y)\;=\;y^3+4t\,y^2+\frac{2(\kappa+3t^4-4t^2)}{t^2-1}\,y+\frac{4t(\kappa+t^4-2t^2)}{t^2-1};
\]
all coefficients are nonnegative on the regime because
$\kappa+3t^4-4t^2=(3t^2-1)(t^2-1)+(\kappa-1)\ge0$ and
$\kappa+t^4-2t^2=(t^2-1)^2+(\kappa-1)\ge0$ [\texttt{II: kappa+3t\^{}4...},
\texttt{II: kappa+t\^{}4...}]. Writing $r(y)=c_0+c_1y+c_2y^2+y^3$ (so $c_i\ge0$), the split
is explicit:
\[
(q-1)(t^2-u_\ast)^2\;=\;y\,r(y)\;=\;\underbrace{c_1y^2+y^4}_{\sigma_0\ :=\ (c_1)\,y^2+(1)\,y^4}
\;+\;y\;\underbrace{\bigl(c_0+c_2y^2\bigr)}_{\sigma_1},
\]
and both $\sigma_0=c_1(x-t)^2+(x-t)^4$ and $\sigma_1=c_0+c_2(x-t)^2$ are sums of squares with
manifestly nonnegative (diagonal-Gram) coefficients, giving the Markov--Luk\'acs form
$q-1=\bigl[\sigma_0+(x-t)\sigma_1\bigr]/(t^2-u_\ast)^2$ on $[t,\infty)$. (c) Under
(A1)--(A3) with $\lambda_4>0$,
\[
\E q\;\le\;\frac{u_\ast^2-2u_\ast+\kappa}{(t^2-u_\ast)^2}\;=\;p
\qquad\text{[\texttt{II: value identity}]}.
\]
(d) The witness
$\mu^\star=p\,\delta_t+w_+\delta_a+w_-\delta_{-a}$: mass $p+w_++w_-=1$; mean
$pt+a(w_+-w_-)=pt-pt=0$; variance $pt^2+(w_++w_-)u_\ast=pt^2+(1-p)u_\ast=1$ by definition of
$u_\ast$; fourth moment $pt^4+(1-p)u_\ast^2=\kappa$ [\texttt{II: witness kurtosis}]. This
last identity is the algebra that produced \eqref{eq:regimeII} in the first place:
eliminating $u_\ast$ from the variance and fourth-moment equations yields
$p\,\bigl((t^2-1)^2+\kappa-1\bigr)=\kappa-1$. Weights are nonnegative by the sign facts;
tail mass is $p$ since $a<t$. At $\kappa=1$: $p=0$, $u_\ast=1$, and the witness degenerates
to the symmetric law on $\{\pm1\}$ with $\P(X\ge t)=0$ for $t>1$; consistent.

\medskip\noindent\textbf{Regime IIIa} ($1<\kappa\le\tfrac32$, $\max(\tauofk,0)\le t\le b$,
$t>0$). Let $k_2=2b(c-b)=u(s-u)\ge0$ and $\lambda_4=\bigl((c-b)(b+c)^3\bigr)^{-1}
=(us^3)^{-1}>0$, and
\[
q(x)\;=\;\lambda_4\Bigl[(x^2-c^2)^2+k_2\,(x+c)^2\Bigr].
\]
(a) Expanding, $q=\lambda_4\bigl[x^4+(k_2-2c^2)x^2+2k_2c\,x+(c^4+k_2c^2)\bigr]$: no $x^3$
term (the skewness is unconstrained), and the $x$-coefficient is the multiplier of the
\emph{equality} constraint (A1), whose sign is free. (b) $q\ge0$ on $\R$ is manifest (two
squares, nonnegative coefficients). The key factorization
[\texttt{IIIa: q-1 factorization}]:
\[
q(x)-1=\lambda_4\,(x-b)^2\bigl(x^2+2bx+r\bigr),\qquad r=\tfrac{s^2-6su-3u^2}{4},
\]
verified by expansion using $q(b)=\lambda_4(c-b)(b+c)^3\cdot 1=1$ (which itself is the
identity $(b^2-c^2)^2+k_2(b+c)^2=(c-b)(b+c)^3$). The quadratic factor equals
$(x+b)^2-(b^2-r)$ with $b^2-r=u(s+u)\ge0$ [\texttt{IIIa: b\^{}2 - r1r2 == ...}], so its
larger root is exactly $\tau(\kappa)=-b+\sqrt{u(s+u)}$; hence $x^2+2bx+r\ge0$ for
$x\ge\tauofk$, and $q-1\ge0$ on $[t,\infty)$ whenever $t\ge\tauofk$ (including the closed
edge $t=\tauofk$). An explicit Markov--Luk\'acs form on $[t,\infty)$ follows by writing
$x^2+2bx+r=(x-t)^2+(2t+2b)(x-t)+(t^2+2bt+r)$ with $t^2+2bt+r\ge0$ on the regime:
\[
q-1=\underbrace{\lambda_4(x-b)^2(x-t)^2+\lambda_4(t^2+2bt+r)(x-b)^2}_{\sigma_0}
+(x-t)\,\underbrace{\lambda_4(2t+2b)(x-b)^2}_{\sigma_1}.
\]
(c) $\E q\le\lambda_4\bigl(c^4+k_2c^2+k_2-2c^2+\kappa\bigr)=c/s$ under (A1)--(A3)
[\texttt{IIIa: value identity}]. (d) Witness
$\mu^\star=\tfrac{c}{s}\delta_b+\tfrac{b}{s}\delta_{-c}$: mass $(b+c)/s=1$; mean
$(bc-cb)/s=0$; variance $bc(b+c)/s=s/s=1$; fourth moment
$bc(b^3+c^3)/s=\bigl((b+c)^3-3bc(b+c)\bigr)/s=(s^3-3s)/s=s^2-3=\kappa$
[\texttt{IIIa: witness ...}]. All weights positive; the single atom in $[t,\infty)$ is $b$
(as $t\le b$), of mass $c/s$. Both bounds meet at $c/s=\tfrac12(1+u/s)$.
Plateau window:
\[
\tau\le b\iff u(s+u)\le(2b)^2=(s-u)^2\iff 3us\le s^2\iff 9(\kappa-1)\le\kappa+3
\iff\kappa\le\tfrac32,
\]
with equality giving $\tau(\tfrac32)=b(\tfrac32)=1/\sqrt2$; and
$\tau\le0\iff u(s+u)\le b^2$, which simplifies to $2u^4+10u^2-1\le0$, i.e.\
$\kappa\le\kss=(3\sqrt3-3)/2$ [\texttt{IIIa: tau<=b iff ...}].

\medskip\noindent\textbf{Regime IIIb.} \cref{prop:IIIb} below proves that any solution of
\eqref{eq:systemS} with the stated inequalities produces a matched pair, hence the exact
value, at its parameter point. Solvability at every grid point evaluated is a machine fact
(\cref{sec:pipeline}); no global-solvability claim is made.

\medskip\noindent\textbf{Coverage and gluing.} Fix $\kappa>1$. If $\kappa\le\tfrac32$:
$\tauofk\le b$ and, setting $t_{\mathrm{III}}=\max(\tauofk,0)$,
$(0,\infty)=(0,t_{\mathrm{III}})\cup[t_{\mathrm{III}},b]\cup[b,c]\cup[c,\infty)$, covered by
IIIb/IIIa/I/II respectively (the first piece is empty iff $\kappa\le\kss$). If
$\kappa>\tfrac32$: the plateau is empty and $(0,\infty)=(0,b)\cup[b,c]\cup[c,\infty)$,
covered by IIIb/I/II. Values glue: at $t=c$, \eqref{eq:regimeII} equals $1/(1+c^2)$
[\texttt{II==I on t=c(kappa)}]; at $t=b$, $c/s=1/(1+b^2)$ [\texttt{IIIa==I on t=b(kappa)}]
(both are the statement $b^2+1=s\,b\cdot$(algebra), verified symbolically); the IIIb value
tends to the IIIa value as $t\uparrow\tauofk$ and to the tongue value as $t\uparrow b$
(machine-verified on grids; the LP sweep finds no jump). At $\kappa=1$ the class is the
symmetric law on $\{\pm1\}$ and all formulas degenerate consistently ($V_1=\tfrac12$ for
$t\le1$, $=0$ for $t>1$).
\end{proof}

\begin{proposition}[IIIb matched pairs]\label{prop:IIIb}
Let $(t,\kappa)$ lie in the central wedge and suppose
$(w_0,w_1,w_2,c,b,k_2,\lambda_4)$ solves \eqref{eq:systemS} with $w_i\ge0$, $0<t<b<c$,
$k_2\ge0$, $\lambda_4>0$, and $q-1\ge0$ on $[t,\infty)$. Then
$\Vone(t,\kappa)=w_1+w_2=\lambda_0+\lambda_2+\lambda_4\kappa$, where
$\lambda_0=\lambda_4(c^4+k_2c^2)$, $\lambda_2=\lambda_4(k_2-2c^2)$.
\end{proposition}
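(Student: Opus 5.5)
The proof is a matched pair in the sense of \cref{lem:weak-duality}: the certificate $q$ from \eqref{eq:systemS} gives the upper bound $\lambda_0+\lambda_2+\lambda_4\kappa$, and the three-point law $\mu^\star=w_0\delta_{-c}+w_1\delta_t+w_2\delta_b$ gives the matching lower bound $w_1+w_2$. The remaining work is to check that these two numbers are equal.

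\emph{Upper bound.} Expanding $q$ as in the Regime IIIa analysis gives
\[
q=\lambda_4\bigl[x^4+(k_2-2c^2)x^2+2k_2c\,x+(c^4+k_2c^2)\bigr].
\]
There is no $x^3$ term. The multiplier $\lambda_1=2\lambda_4k_2c$ attaches to the equality constraint (A1), so its sign is free, and $\lambda_4>0$. Hence $q$ has the form required by \cref{def:cert}, with $\lambda_0,\lambda_2$ as stated. Next, $q\ge0$ on $\R$, because $q$ is $\lambda_4$ times a sum of two squares with nonnegative weights (using $k_2\ge0$). The event inequality $q\ge1$ on $[t,\infty)$ is a hypothesis of the proposition. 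If an explicit Markov--Luk\'acs form is wanted, \cref{lem:sos-ray} supplies one. \cref{lem:weak-duality} then gives $\Vone(t,\kappa)\le\lambda_0+\lambda_2+\lambda_4\kappa$.

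\emph{Lower bound.} The first four equations of \eqref{eq:systemS}, together with $w_i\ge0$, say exactly that $\mu^\star$ is a probability law with mean $0$, variance $1$ and fourth moment $\kappa$. So $\mu^\star\in\Ckap$. Since $-c<0<t<b$, the atoms lying in $[t,\infty)$ are $t$ and $b$, and $\mu^\star([t,\infty))=w_1+w_2$. This gives $\Vone\ge w_1+w_2$.

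\emph{Equality.} This is the only real step, and I will prove it by complementary slackness. By the moment equations and the fact that $\mu^\star$ has fourth moment exactly $\kappa$,
\[
\E_{\mu^\star}q=\lambda_0+\lambda_1\cdot0+\lambda_2\cdot1+\lambda_4\kappa .
\]
I will also evaluate $\E_{\mu^\star}q$ atom by atom:
\[
\E_{\mu^\star}q=w_0\,q(-c)+w_1\,q(t)+w_2\,q(b)=w_0\cdot 0+w_1\cdot 1+w_2\cdot 1 .
\]
Here $q(-c)=\lambda_4[(c^2-c^2)^2+k_2\cdot0]=0$ directly from the shape of $q$, and $q(t)=q(b)=1$ are imposed in \eqref{eq:systemS}. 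Comparing the two evaluations gives $\lambda_0+\lambda_2+\lambda_4\kappa=w_1+w_2$, which closes the sandwich.

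The tangency condition $q'(b)=0$ is not used in this argument. It is what makes the system determined, and it is forced for consistency with $q\ge1$ near the interior point $b$, but it plays no role in the sandwich itself. I therefore expect no real obstacle. The only point needing care is that the fourth-moment constraint is active at the witness. Because $\lambda_4>0$, weak duality must hold with equality in (A3), and this is guaranteed because the fourth equation of \eqref{eq:systemS} pins $\E X^4=\kappa$ rather than merely $\le\kappa$.
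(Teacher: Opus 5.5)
Your proposal is correct and follows essentially the same route as the paper's proof. The upper bound comes from \cref{lem:weak-duality} applied to $q$, the lower bound from the three-point law $w_0\delta_{-c}+w_1\delta_t+w_2\delta_b\in\Ckap$, and the two are matched by evaluating $\E_{\mu^\star}q$ atom by atom (using $q(-c)=0$ and $q(t)=q(b)=1$) and, separately, through the four moment equations. Your side remark is also right: the tangency condition $q'(b)=0$ is not needed for the sandwich, and it is in fact implied by the other hypotheses.
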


\begin{proof}
Upper: $q\in\operatorname{span}\{1,x,x^2,x^4\}$ with $\lambda_4>0$; $q\ge0$ on $\R$ (sum of
two squares); $q\ge1$ on $[t,\infty)$ by hypothesis; so $\Vone\le\E q\le
\lambda_0+\lambda_2+\lambda_4\kappa$ by \cref{lem:weak-duality}. Lower: the four moment
equations of \eqref{eq:systemS} say
$\mu=w_0\delta_{-c}+w_1\delta_t+w_2\delta_b\in\Ckap$ (fourth moment exactly $\kappa$), and
its tail mass is $w_1+w_2$. Matching: $q(-c)=0$ by construction and $q(t)=q(b)=1$ by
the touching conditions of \eqref{eq:systemS}, so
\[
w_1+w_2=\E_\mu q
=\lambda_0\cdot1+\lambda_1\cdot0+\lambda_2\cdot1+\lambda_4\,\kappa ,
\]
where the right side evaluates $\E_\mu$ of each monomial through the four moment
equations. Hence the upper
and lower bounds coincide.
\end{proof}

\subsection{Two-sided map and collapse}

\RestateTheorem{thm:twosided}{Two-sided map}{\ThmTwoSidedBody}

\begin{proof}
$t\le1$: the symmetric law on $\{\pm1\}$ lies in $\Ckap$ ($\kappa\ge1$) and has
$\P(|X|\ge t)=1$.

Chebyshev regime ($t\ge1$, $\kappa\ge t^2$). Certificate $q(x)=x^2/t^2$: degree $2$,
$\lambda_4=0$, $q\ge0$, and on both event pieces $q-1=(x\mp t)(x\pm t)/t^2$ has the
Markov--Luk\'acs forms $\bigl((x-t)^2+2t(x-t)\bigr)/t^2$ on $[t,\infty)$ and mirrored on
$(-\infty,-t]$. $\E q=1/t^2$. Witness $\{0,\pm t\}$ with masses
$(1-\tfrac1{t^2},\tfrac1{2t^2},\tfrac1{2t^2})$: mean $0$, variance $1$, fourth moment
$t^2\le\kappa$ [\texttt{T2-cheb: ...}]; two-sided tail mass $1/t^2$.

Binding regime ($t\ge1$, $1\le\kappa\le t^2$). The Regime II certificate
$q=(x^2-u_\ast)^2/(t^2-u_\ast)^2$ is \emph{even} [\texttt{T2-bind: q2 even}], so
$q\ge1$ on $(-\infty,-t]$ follows from $q\ge1$ on $[t,\infty)$; the proof of the latter and
of $\E q\le p$ is verbatim from Regime II, whose sign facts here need only
$u_\ast\ge0\iff\kappa\le t^2$ (the numerator identity
$(t^2-1)^2-(\kappa-1)(t^2-1)=(t^2-1)(t^2-\kappa)$
[\texttt{T2-bind: (1-p t\^{}2) numerator ...}]), a condition weaker than Regime II's
$\kappa\le\kc(t)$. Witness: the symmetric four-point law
$\tfrac p2(\delta_t+\delta_{-t})+\tfrac{1-p}2(\delta_a+\delta_{-a})$: odd moments vanish;
variance $pt^2+(1-p)u_\ast=1$ and fourth moment $pt^4+(1-p)u_\ast^2=\kappa$ are the
\emph{same} two equations as in Regime II [\texttt{T2-bind: symmetric witness ...}];
two-sided tail mass $p$ ($a<t$ iff $u_\ast<t^2$, given). The regimes glue at $\kappa=t^2$
(both give $1/t^2$).
\end{proof}

\RestateCorollary{cor:collapse}{One-sided/two-sided collapse}{\CorCollapseBody}

\begin{proof}
($\Leftarrow$) For $t\ge c(\kappa)$: $\Vone=p$ by \cref{thm:main-map}(II), and
$\kappa\le\kc(t)\le t^2$ (by \cref{lem:tongue-equiv} and $\kc(t)\le t^2$ for $t\ge1$), so
\cref{thm:twosided} gives $\Vtwo=p$ as well.
($\Rightarrow$) Let $1\le t<c(\kappa)$, i.e.\ $\kappa>\kc(t)$; then $\Vone=1/(1+t^2)$. If
$\kappa\ge t^2$: $\Vtwo=1/t^2>1/(1+t^2)$. If $\kc(t)<\kappa<t^2$:
$\Vtwo=\tfrac{\kappa-1}{(t^2-1)^2+\kappa-1}$, and
$x\mapsto\tfrac{x-1}{(t^2-1)^2+x-1}$ is strictly increasing for $t\neq1$ (derivative
$(t^2-1)^2/(\cdot)^2>0$) with value $1/(1+t^2)$ at $x=\kc(t)$; so $\Vtwo>1/(1+t^2)=\Vone$.
At $t=1$: $\Vone=\tfrac12<1=\Vtwo$. For $0<t<1$: $\Vtwo=1$ while
$\Vone\le1-(2\sqrt3-3)/\kappa<1$: the \emph{certificate} constructed in the proof of
\cref{prop:endpoint} is valid for every $\kappa\ge1$ (only the tightness witness needs
$\kappa\ge\kss$) and satisfies $q\ge\mathbf{1}_{[0,\infty)}\ge\mathbf{1}_{[t,\infty)}$
pointwise for every $t\ge0$, so its bound applies for all $\kappa\ge1$. Equality thus holds exactly on $t\ge c(\kappa)$ (boundary included; there
$\kappa=\kc(t)$ and the three expressions coincide).
\end{proof}

\subsection{Proof degree}

\RestateTheorem{thm:proofdegree}{Proof-degree map}{\ThmProofDegreeBody}

\begin{proof}
\emph{Degree 2 suffices on the tongue}: the Regime I certificate has degree $2$ and proves
the tight value (\cref{thm:main-map}(I)).

\emph{Degree 2 fails off the tongue}: let $q=l_0+l_1x+l_2x^2$ be any polynomial with
$q\ge\mathbf{1}_{[t,\infty)}$ pointwise. Evaluate against Cantelli's law
$\mu_C=\tfrac1{1+t^2}\delta_t+\tfrac{t^2}{1+t^2}\delta_{-1/t}$, a probability law with
mean $0$, variance $1$ (its kurtosis is irrelevant: a degree-$2$ certificate's value
$l_0+l_2$ uses only the first two moments):
\[
l_0+l_2=\E_{\mu_C}q\;\ge\;\P_{\mu_C}(X\ge t)=\frac{1}{1+t^2}.
\]
So no degree-$2$ certificate proves a bound below $1/(1+t^2)$. Off the tongue
($\kappa<\kc(t)$) the true value is strictly smaller: in Regime II,
$p<\tfrac1{1+t^2}$ strictly, by the identity
\[
\frac1{1+t^2}-p\;=\;\frac{t^2\,\bigl(\kc(t)-\kappa\bigr)}{(1+t^2)\bigl((t^2-1)^2+\kappa-1\bigr)}\;>\;0;
\]
in Regimes IIIa/IIIb the strict
inequality follows from attainment and rigidity: by \cref{lem:attain} some
$\mu^\star\in\Ckap$ attains $\Vone(t,\kappa)$; if $\Vone(t,\kappa)$ were equal to
$1/(1+t^2)$, then \cref{lem:rigidity} would force $\mu^\star=\mu_C$, whose fourth moment
$\kc(t)$ exceeds $\kappa$ off the tongue, contradicting $\mu^\star\in\Ckap$. Hence
$\Vone<1/(1+t^2)$ strictly everywhere off the tongue, and $\PDeg>2$.

\emph{No intermediate degree is being skipped}: $x^3$ is not a constrained moment of
$\Ckap$, so no multiplier can generate a cubic term, and a nonconstant polynomial
nonnegative on $\R$ has even degree; the two kinds of \cref{def:cert} are the only
possibilities. Degree $2$ failing off the tongue, $\PDeg=4$ there, realized by the
degree-$4$ certificates of \cref{thm:main-map}.
\end{proof}

\subsection{The endpoint}
\label{app:endpointpf}

\RestateProposition{prop:endpoint}{The $t=0$ endpoint}{\PropEndpointBody}

\begin{proof}
Let $b_0=\sqrt{2\kappa/3}$, $c_0=b_0\tfrac{1+\sqrt3}{2}$, $k_2=2b_0(c_0-b_0)\ge0$,
$\lambda_4=\bigl((c_0-b_0)(b_0+c_0)^3\bigr)^{-1}>0$, and
$q=\lambda_4[(x^2-c_0^2)^2+k_2(x+c_0)^2]$. Then $q(0)=1$ and the polynomial identity
\[
q(x)-1=\lambda_4\;x\,(x+2b_0)\,(x-b_0)^2
\]
holds [\texttt{0: q - 1 == lam4 * x * (x + 2b) * (x-b)\^{}2}]; each factor is nonnegative on
$[0,\infty)$, so $q\ge\mathbf{1}_{[0,\infty)}$ pointwise ($q\ge0$ on $\R$ as a sum of two
squares). The value: $\E q\le\lambda_4(c_0^4+k_2c_0^2+k_2-2c_0^2+\kappa)=1-(2\sqrt3-3)/\kappa$
[\texttt{0: E[q] ...}]. Witness: $\mu=1/(b_0+c_0)$, $w_+=\mu/b_0$, $w_-=\mu/c_0$,
$w_0=1-w_+-w_-$; mean $-c_0w_-+b_0w_+=0$, variance $\mu(b_0+c_0)=1$, fourth moment
$\mu(b_0^3+c_0^3)=\kappa$ [\texttt{0: witness ...}]; and
\[
w_0=1-\frac{1}{b_0c_0}\ \ge\ 0
\iff b_0c_0=\frac{\kappa(1+\sqrt3)}{3}\ge1
\iff \kappa\ge\frac{3}{1+\sqrt3}=\kss,
\]
precisely the hypothesis. Tail mass $w_0+w_+=1-w_-=1-(2\sqrt3-3)/\kappa$
[\texttt{0: value ...}]. For $\kappa\le\kss$, \cref{thm:main-map}(IIIa) applies at every
$t\in(0,b(\kappa)]$ (as $\tauofk\le0$), giving the plateau value independent of $t$; the two
expressions agree at $\kappa=\kss$ (both equal $1/\sqrt3$, a computation).
\end{proof}

\subsection{The minimal polynomial at \texorpdfstring{$(t,\kappa)=(\tfrac12,2)$}{(1/2,2)}}
\label{app:minpoly}

Eliminating $(w_0,w_1,w_2)$ linearly and then $(c,b,k_2,\lambda_4)$ by resultants (or
directly by a lexicographic Gr\"obner basis) from \eqref{eq:systemS} at $t=\tfrac12$,
$\kappa=2$ leaves a single univariate polynomial in $V=w_1+w_2$ whose factor vanishing at
the numerical solution ($V=0.7262558785\ldots$, solved to $50$ digits with primal $=$ dual
to $30$ digits) is the degree-six irreducible polynomial displayed in \cref{sec:endpoint}.
The computation was performed twice with independent code (step-wise filtered resultants,
and a from-scratch Gr\"obner elimination during adversarial review); the two agree
character for character, and the same procedure at $(t,\kappa)=(0.4,2.5)$ again yields an
irreducible sextic: the algebraic degree is not an artifact of the sample point.

\subsection{The class with the fourth moment pinned}
\label{app:eqclass}

A natural question is whether constraining $\E X^4=\kappa$ (equality) instead of
$\E X^4\le\kappa$ changes the map. It does not:

\begin{proposition}[Equality class]\label{prop:eqclass}
Let $\Ckap^{=}=\{X:\E X=0,\ \E X^2=1,\ \E X^4=\kappa\}$ and
$\Vone^{=}(t,\kappa)=\sup_{X\in\Ckap^{=}}\P(X\ge t)$. Then
$\Vone^{=}(t,\kappa)=\Vone(t,\kappa)$ for every $t>0$, $\kappa\ge1$. The supremum over
$\Ckap^{=}$ is attained wherever the optimal law of \cref{thm:main-map} has fourth moment
exactly $\kappa$ (Regimes II, IIIa, IIIb, and the tongue boundary); in the interior of the
tongue it is approached but not attained.
\end{proposition}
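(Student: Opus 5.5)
Since $\Ckap^{=}\subseteq\Ckap$, we get $\Vone^{=}\le\Vone$ immediately, and the work is the reverse inequality. I would split the argument by where the optimal law of \cref{thm:main-map} sits.

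\emph{Attained cases.} Wherever the optimal law already has fourth moment exactly $\kappa$, it lies in $\Ckap^{=}$ and attains $\Vone$, so equality holds with attainment. This covers:
\begin{itemize}
\item Regime II: the three-point law has fourth moment $\kappa$ by construction.
\item Regime IIIa: the witness $\{b,-c\}$ has fourth moment $s^2-3=\kappa$.
\item Regime IIIb: system (S) imposes fourth moment $\kappa$.
\item The tongue boundary, where $\kc(t)=\kappa$.
\item $\kappa=1$, where both classes reduce to the uniform law on $\{\pm1\}$.
\end{itemize}
By the coverage part of the proof of \cref{thm:main-map}, the only remaining region is the open tongue $b<t<c$ with $\kappa>\kc(t)$.

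\emph{Tongue interior, approach.} Here I mix the Cantelli law with a vanishing amount of heavy-tailed mass so as to raise the fourth moment to exactly $\kappa$ while keeping the mean and variance pinned. Take $\nu_\varepsilon=(1-\varepsilon)\mu_C+\varepsilon\rho_\varepsilon$, with $\rho_\varepsilon$ a mean-$0$, variance-$1$ law. A convenient choice is the symmetric three-point law $\rho_M$ on $\{0,\pm M\}$ with masses $(1-1/M^2,\tfrac1{2M^2},\tfrac1{2M^2})$, whose fourth moment is $M^2$. Then $\nu$ has mean $0$, variance $1$, and fourth moment $(1-\varepsilon)\kc(t)+\varepsilon M^2$.

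Choose $M_\varepsilon^2=\kc(t)+(\kappa-\kc(t))/\varepsilon$, which makes the fourth moment exactly $\kappa$. This is an explicit solve rather than an IVT argument, though one could equally fix $M$ and use \cref{fact:ivt} to find the mixing weight. The tail mass satisfies $\nu_\varepsilon([t,\infty))\ge(1-\varepsilon)/(1+t^2)$, which tends to $\Vone$ as $\varepsilon\to0$. Hence $\Vone^{=}\ge\Vone$ there too.

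\emph{Non-attainment in the tongue interior.} Suppose some $\mu\in\Ckap^{=}$ achieves $1/(1+t^2)$. By \cref{lem:rigidity} (which needs only mean $0$ and variance $1$), $\mu=\mu_C$, whose fourth moment is $\kc(t)<\kappa$. This contradicts $\mu\in\Ckap^{=}$, so the supremum is approached but not attained.

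The main obstacle I expect is bookkeeping rather than analysis: making sure that every non-tongue-interior point really does have an optimal law with fourth moment exactly $\kappa$. For IIIb this relies on \cref{prop:IIIb}, so at points where (S) has not been shown solvable, attainment should instead be taken from \cref{lem:attain}. There I would show that an optimal law $\mu^\star$ off the tongue interior can be taken with fourth moment $\kappa$, as follows. If $\E_{\mu^\star}X^4<\kappa$, then $\mu^\star$ is also optimal for $\CkapOf{\kappa'}$ for some $\kappa'<\kappa$, so $\Vone(t,\kappa')=\Vone(t,\kappa)$. That would make $\Vone$ locally constant in $\kappa$, which holds only on the tongue or plateau. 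In the IIIb region this contradicts the strict monotonicity visible from the gluing, but the point still needs care.

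A cleaner fallback is to apply the mixing construction at every point: any optimal $\mu^\star$ with slack can be pushed to fourth moment exactly $\kappa$ at cost $\varepsilon$ in tail mass. That gives $\Vone^{=}=\Vone$ uniformly, and only the attainment clause then depends on the regime analysis.
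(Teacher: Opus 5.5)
Your proposal is correct, and your ``cleaner fallback'' is exactly the paper's proof. The paper takes an optimal $\mu\in\Ckap$ from \cref{lem:attain}. If $\E_\mu X^4=\kappa'<\kappa$, it mixes in $\varepsilon\rho_{\beta(\varepsilon)}$, where $\rho_\beta$ is the two-point law on $\{\beta,-1/\beta\}$ with fourth moment $\kc(\beta)$. The parameter $\beta(\varepsilon)$ comes from \cref{fact:ivt} and satisfies $\beta(\varepsilon)<\min(t,1)$, so no tail mass is added. Non-attainment in the open tongue is then \cref{lem:rigidity}, exactly as you argue.

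You should make the uniform mixing step your main line rather than a fallback. Your regime-by-regime route gives $\Vone^{=}\ge\Vone$ on IIIb only where \eqref{eq:systemS} is known to be solvable. Your $\{0,\pm M\}$ perturbation with explicit $M_\varepsilon$ is an equally good, IVT-free substitute for $\rho_\beta$, since you only need the lower bound $(1-\varepsilon)\Vone$ on the tail.

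Your local-constancy side remark is not right as written. The plateau is flat in $t$, not in $\kappa$, and strict monotonicity in $\kappa$ on IIIb is never established. It can be repaired in one line, and the repair proves more than the paper does:
\begin{itemize}
\item Suppose an optimal $\mu^\star$ had $\E X^4=\kappa'<\kappa\le\kc(t)$.
\item Then $(1-\lambda)\mu^\star+\lambda\mu_C$, with $\lambda=(\kappa-\kappa')/(\kc(t)-\kappa')\in(0,1]$, lies in $\Ckap$.
\item Its tail mass is $(1-\lambda)\Vone+\lambda/(1+t^2)\le\Vone$, so $\Vone\ge 1/(1+t^2)$, and the Cantelli bound gives equality.
\item Hence $\mu^\star=\mu_C$ by \cref{lem:rigidity}, so $\kappa'=\kc(t)\ge\kappa$, a contradiction.
\end{itemize}
So at every $(t,\kappa)$ with $\kappa\le\kc(t)$, every optimal law from \cref{lem:attain} already lies in $\Ckap^{=}$. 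This gives the attainment clause everywhere off the open tongue without appealing to solvability of \eqref{eq:systemS}. The paper's attainment claim on IIIb cites the (S) witness and therefore inherits that caveat.
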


\begin{proof}
$\Ckap^{=}\subseteq\Ckap$ gives $\Vone^{=}\le\Vone$. For the converse, let
$\mu\in\Ckap$ attain $\Vone$ (\cref{lem:attain}) with $\kappa'=\E_\mu X^4\le\kappa$; if
$\kappa'=\kappa$ we are done, so assume $\kappa'<\kappa$ (hence $\kappa>1$). For
$\beta\in(0,1]$ let
$\rho_\beta=\tfrac1{1+\beta^2}\delta_\beta+\tfrac{\beta^2}{1+\beta^2}\delta_{-1/\beta}$,
the two-point law with mean $0$, variance $1$, and fourth moment
$\kc(\beta)=\beta^2-1+\beta^{-2}$, which ranges over $[1,\infty)$ as $\beta$ ranges over
$(0,1]$ [\texttt{C4: two-point EX4 == kappa\_c}]. For $\varepsilon\in(0,1)$ set
\[
\mu_\varepsilon=(1-\varepsilon)\,\mu+\varepsilon\,\rho_{\beta(\varepsilon)},
\qquad\text{with }\kc\bigl(\beta(\varepsilon)\bigr)
=\frac{\kappa-(1-\varepsilon)\kappa'}{\varepsilon}\;\bigl(\ge\kappa\ge1\bigr),
\]
which exists by the intermediate value theorem and, for $\varepsilon$ small enough, has
$\beta(\varepsilon)<\min(t,1)$, so $\rho_{\beta(\varepsilon)}$ places no mass in
$[t,\infty)$. By construction $\mu_\varepsilon$ has mean $0$, variance $1$, fourth moment
exactly $\kappa$, and tail mass $(1-\varepsilon)\,\Vone(t,\kappa)$. Letting
$\varepsilon\downarrow0$: $\Vone^{=}\ge\Vone$. Attainment: the witnesses of Regimes II,
IIIa, IIIb have $\E X^4=\kappa$ exactly and lie in $\Ckap^{=}$; in the tongue's interior
the optimum of $\Ckap$ is the Cantelli pair with $\E X^4=\kc(t)<\kappa$, and by the
rigidity lemma (\cref{lem:rigidity}) no law with fourth moment $\kappa\neq\kc(t)$ attains
$1/(1+t^2)$, so the equality-class supremum is approached only.
\end{proof}

This is why the paper works with the inequality class throughout: the two classes share
every value, the inequality class is the form in which the assumption arrives in
applications, and its suprema are always attained.

\subsection{Applications, proved}
\label{app:consproofs}

\RestateCorollary{cor:quantile}{Worst-case quantiles}{\CorQuantileBody}

\begin{proof}
Standardizing ($X\mapsto(X-\mu)/\sigma$ maps the hypothesis class onto $\Ckap$ and the
event onto $\{X\ge t\}$), claim (i) reads
\[
\Vone(t,\kappa)\;\le\;p
\qquad\Longleftrightarrow\qquad
t\;\ge\;t_p(\kappa),
\]
since ``$\P(X\ge\mu+t\sigma)\le p$ for every $X$ in the class'' says exactly
$\Vone(t,\kappa)\le p$, by the definition of $\Vone$ as a supremum. Steps 1--4 prove it;
Step 5 proves claim (ii).

\emph{Step 1: $\Vone(\cdot,\kappa)$ is nonincreasing on $(0,\infty)$ and strictly
decreasing on $[b(\kappa),\infty)$.} Nonincreasing is definitional: for $t\le t'$ the
event $\{X\ge t'\}$ is contained in $\{X\ge t\}$, so $\P(X\ge t')\le\P(X\ge t)$ for
every $X$, and the suprema inherit the order. Strictness on $[b,c]$: there
$\Vone=1/(1+t^2)$ (\cref{thm:main-map}(I)), strictly decreasing. Strictness on
$[c,\infty)$: there $\Vone=(\kappa-1)/((t^2-1)^2+\kappa-1)$ (\cref{thm:main-map}(II)),
and $\kappa>1$ gives $c(\kappa)>1$, so
\[
\frac{d}{dt}\,(t^2-1)^2\;=\;4t\,(t^2-1)\;>\;0\qquad\text{for }t\ge c(\kappa)>1,
\]
so the denominator strictly increases and the value strictly decreases. The two formulas
agree at $t=c$ (the gluing identities after \cref{thm:main-map}), so
$\Vone(\cdot,\kappa)$ is strictly decreasing on all of $[b,\infty)$.

\emph{Step 2: solving $\Vone(t,\kappa)=p$ on the tongue.} For $t\in[b,c]$,
\[
\frac{1}{1+t^2}=p
\;\iff\;
t^2=\frac{1-p}{p}
\;\iff\;
t=\sqrt{\frac{1-p}{p}},
\]
and this $t$ lies in $[b,c]$ exactly when $1/(1+c^2)\le p\le 1/(1+b^2)$, because
$t\mapsto1/(1+t^2)$ maps $[b,c]$ onto $[1/(1+c^2),\,1/(1+b^2)]$ decreasingly. This is
the Cantelli branch.

\emph{Step 3: solving on the tail.} For $t\ge c$ we have $t^2-1\ge c^2-1>0$, so
\[
\frac{\kappa-1}{(t^2-1)^2+\kappa-1}=p
\;\iff\;
(t^2-1)^2=(\kappa-1)\,\frac{1-p}{p}
\;\iff\;
t^2=1+\sqrt{(\kappa-1)\,\frac{1-p}{p}},
\]
taking the positive square root because $t^2-1>0$. The resulting threshold $t_p$
satisfies $t_p\ge c$ exactly when $p\le\Vone(c,\kappa)=1/(1+c^2)$, by Step 1's
monotonicity. This is the kurtosis branch.

\emph{Step 4: the equivalence.} If $t\ge t_p$, then
$\Vone(t,\kappa)\le\Vone(t_p,\kappa)=p$ by Step 1. If $t<t_p$, there are two cases.
Either $t\ge b$: then both $t$ and $t_p$ lie in $[b,\infty)$ ($t_p>b$ on both
branches), so strict decrease gives $\Vone(t,\kappa)>\Vone(t_p,\kappa)=p$. Or $t<b$:
then
\[
\Vone(t,\kappa)\;\ge\;\Vone(b,\kappa)\;=\;\frac{1}{1+b^2}\;>\;p,
\]
the last inequality being the standing hypothesis $p<1/(1+b^2)$. In either case
$\Vone(t,\kappa)>p$, i.e.\ some law in the class exceeds $p$ at threshold $t$, so the
chance constraint fails. The threshold is attained, not merely approached: at $t=t_p$
the regime witness of \cref{thm:main-map} (the Cantelli pair on the tongue, the
three-point law in the tail) has tail probability exactly $p$.

\emph{Step 5: two-sided, claim \textup{(ii)}.} By \cref{thm:twosided}, for $t\ge1$: $\Vtwo=1/t^2$ on
$[1,\sqrt\kappa]$ (the branch $\kappa\ge t^2$), and
$\Vtwo=(\kappa-1)/((t^2-1)^2+\kappa-1)$ on $[\sqrt\kappa,\infty)$ (the branch
$\kappa\le t^2$); both are strictly decreasing (the second by Step 1's computation),
they agree at $t=\sqrt\kappa$ with common value $1/\kappa$, and $\Vtwo=1$ for
$0<t\le1$. Solving $1/t^2=p$ gives $t=1/\sqrt p$, which lies in $[1,\sqrt\kappa]$
exactly when $p\ge1/\kappa$. Solving the second branch repeats Step 3 and yields the
same $t_p(\kappa)$; it lies in $[\sqrt\kappa,\infty)$ exactly when
\[
t_p^2\ge\kappa
\;\iff\;
\sqrt{(\kappa-1)\,\frac{1-p}{p}}\;\ge\;\kappa-1
\;\iff\;
\frac{1-p}{p}\;\ge\;\kappa-1
\;\iff\;
p\;\le\;\frac{1}{\kappa},
\]
dividing by $\kappa-1>0$ and squaring (both sides are nonnegative). At $p=1/\kappa$
the two thresholds coincide at $\sqrt\kappa$. The equivalence then assembles exactly
as in Step 4, using $\Vtwo(t,\kappa)=1>p$ for $t\le1$.
\end{proof}

\RestateLemma{lem:avgkurt}{Kurtosis of an average}{\LemAvgKurtBody}

\begin{proof}
Set $Y_i=(X_i-\mu)/\sigma$, so the $Y_i$ are i.i.d.\ with $\E Y_i=0$, $\E Y_i^2=1$,
$\E Y_i^4=\kappa$, and $\E|Y_i|^3<\infty$ (\cref{fact:momord}). Expanding the fourth
power of the sum and grouping monomials by index pattern,
\[
\Bigl(\sum_{i=1}^m Y_i\Bigr)^{\!4}
=\sum_i Y_i^4
\;+\;4\sum_{i\ne j}Y_i^3Y_j
\;+\;3\sum_{i\ne j}Y_i^2Y_j^2
\;+\;12\!\!\sum_{\substack{j<k\\ i\notin\{j,k\}}}\!\!Y_i^2Y_jY_k
\;+\;24\!\!\!\sum_{i<j<k<l}\!\!\!Y_iY_jY_kY_l,
\]
where the second and third sums run over ordered pairs $i\ne j$ and the coefficients
are the multinomial coefficients of the patterns $(3,1)$, $(2,2)$, $(2,1,1)$,
$(1,1,1,1)$. Take expectations. By independence, the expectation of each product
factors over distinct indices; in the second, fourth, and fifth sums, every term
contains at least one index appearing exactly once, hence a factor $\E Y_j=0$. Only
the first and third sums survive:
\[
\E\Bigl(\sum_{i=1}^m Y_i\Bigr)^{\!4}
\;=\;m\,\E Y^4\;+\;3\,m(m-1)\,(\E Y^2)^2
\;=\;m\kappa+3m(m-1).
\]
The standardized mean is $S_m=m^{-1/2}\sum_iY_i=\sqrt m\,(\bar X_m-\mu)/\sigma$; then
$\E S_m=0$, $\E S_m^2=m^{-1}\sum_i\E Y_i^2=1$ (cross terms carry $\E Y_iY_j=0$), and
\[
\E S_m^4\;=\;\frac{m\kappa+3m(m-1)}{m^2}\;=\;\frac{\kappa}{m}+\frac{3(m-1)}{m}
\;=\;3+\frac{\kappa-3}{m}.
\]
The third moment $\E Y^3$ was never evaluated: it is finite, and it only ever
multiplies $\E Y_j=0$.
\end{proof}

\RestateCorollary{cor:margin}{Exact margin bound}{\CorMarginBody}

\begin{proof}
The affine map $x\mapsto\gamma-\sigma x$ is a bijection from $\Ckap$ onto the class of
laws with mean $\gamma$, variance $\sigma^2$, and fourth central moment at most
$\kappa\sigma^4$: both directions preserve all three constraints, since the fourth
central moment picks up the factor $\sigma^4$ and is blind to the sign flip. Under this
map,
\[
\{M\le0\}\;=\;\{\gamma-\sigma X\le0\}\;=\;\{X\ge\gamma/\sigma\},
\]
a closed event of exactly the map's form. Hence
\[
\sup_M\;\P(M\le0)\;=\;\sup_{X\in\Ckap}\;\P\bigl(X\ge\gamma/\sigma\bigr)
\;=\;\Vone(\gamma/\sigma,\kappa),
\]
the suprema ranging over the two classes exchanged by the bijection. Attainment is
\cref{lem:attain} transported through the same map: if $\mu^\star$ attains
$\Vone(\gamma/\sigma,\kappa)$ and $X^\star\sim\mu^\star$, then
$M^\star=\gamma-\sigma X^\star$ attains the margin bound. In the closed-form regimes
$\mu^\star$ is the explicit regime witness of \cref{thm:main-map}.
\end{proof}

\RestateProposition{prop:certifiable}{Exact directional tails at degree $4$}{\PropCertifiableBody}

\begin{proof}
\emph{The bound, and \textup{(i)}.} Fix $u$ and set $Y=\langle u,X-\mu\rangle/\sigma_u$.
Then $\E Y=0$
by linearity, $\E Y^2=\langle u,\Sigma u\rangle/\sigma_u^2=1$ by the definition of the
covariance, and, evaluating the certifiability hypothesis at the point $u$ (a sum of
squares is pointwise nonnegative),
\[
\E\langle u,X-\mu\rangle^4\;\le\;\kappa\,\langle u,\Sigma u\rangle^2
\qquad\Longrightarrow\qquad
\E Y^4\;=\;\frac{\E\langle u,X-\mu\rangle^4}{\sigma_u^4}\;\le\;\kappa .
\]
Thus the law of $Y$ lies in $\Ckap$, and $\P(Y\ge t)\le\Vone(t,\kappa)$ for every
$t>0$, by the definition of $\Vone$ as a supremum over exactly this class. For the
sum-of-squares form of the derivation, let
$q=\lambda_0+\lambda_1x+\lambda_2x^2+\lambda_4x^4$ be the certificate of
\cref{thm:main-map} at $(t,\kappa)$ (explicit in Regimes I, II, and IIIa, which
together cover every $(t,\kappa)$ outside the central regime); exactly as in
\cref{lem:weak-duality},
\[
\P(Y\ge t)\;\le\;\E\,q(Y)
\;=\;\lambda_0+\lambda_2+\lambda_4\,\E Y^4
\;\le\;\lambda_0+\lambda_2+\lambda_4\,\kappa
\;=\;\Vone(t,\kappa),
\]
where the first inequality integrates the pointwise bound
$\mathbf 1_{[t,\infty)}\le q$, the equality uses $\E Y=0$, $\E Y^2=1$, and the absence
of a cubic term in $q$, and the last inequality uses $\lambda_4\ge0$ together with
$\E Y^4\le\kappa$. Every inequality in this chain carries a degree-$4$ SOS witness:
the two pointwise inequalities for $q$ carry the explicit data of \cref{def:cert}, the
moment step carries the certifiability hypothesis itself, and $\lambda_4\ge0$ is a
sign condition. (At a solved Regime IIIb instance the same chain runs with the
certificate of \cref{prop:IIIb}; the probability bound itself needs no certificate.)

\emph{\textup{(ii)}: tightness for $\kappa\ge3$, $t\ge b(\kappa)$.} Since
$\kappa\ge3>\ks$, the
plateau is empty, and $t\ge b(\kappa)$ places $(t,\kappa)$ in Regime I or Regime II.
Let $\xi$ be that regime's extremal law (\cref{thm:main-map}): the Cantelli pair, with
$\kappa_\xi:=\E\xi^4=\kc(t)\le\kappa$ (the tongue condition), in Regime I; the
three-point law, with $\kappa_\xi=\kappa$ exactly, in Regime II. In both cases
$\E\xi=0$, $\E\xi^2=1$, and $\P(\xi\ge t)=\Vone(t,\kappa)$, and $\xi$ has finite
support, so all its moments are finite. Fix a unit $v\in\R^d$, an orthonormal basis
$e_1,\dots,e_{d-1}$ of $v^\perp$, and let $Z=(Z_1,\dots,Z_{d-1})$ be uniform on
$\{\pm1\}^{d-1}$, independent of $\xi$. Define
\[
X\;=\;\xi\,v\;+\;\sum_{j=1}^{d-1}Z_j\,e_j .
\]
Mean and covariance: $\E X=0$, and
$\E XX^\top=vv^\top\,\E\xi^2+\sum_j e_je_j^\top\,\E Z_j^2=I_d$, the cross terms
vanishing by independence and mean zero.

Directional fourth moments: write $u=a\,v+w$ with $a=\langle v,u\rangle$ and
$w=\sum_jw_je_j\in v^\perp$, and set $W=\sum_jw_jZ_j$, so
$\langle u,X\rangle=a\,\xi+W$ with $\xi$ and $W$ independent. The moments of $W$:
$\E W=\E W^3=0$ (the law of $Z$ is symmetric under $Z\mapsto-Z$), $\E W^2=|w|^2$, and,
by the same solitary-factor argument as in the proof of \cref{lem:avgkurt},
\[
\E W^4\;=\;\sum_jw_j^4\,\E Z_j^4+3\sum_{j\ne k}w_j^2w_k^2\,\E Z_j^2\,\E Z_k^2
\;=\;\sum_jw_j^4+3\Bigl(|w|^4-\sum_jw_j^4\Bigr)
\;=\;3|w|^4-2\sum_jw_j^4 .
\]
Expanding $\E(a\xi+W)^4$ binomially and dropping the two terms with an odd power of
$W$ (each carries a factor $\E W=0$ or $\E W^3=0$, against finite moments of the
atomic law $\xi$),
\[
\E\langle u,X\rangle^4
\;=\;a^4\kappa_\xi\;+\;6a^2|w|^2\;+\;3|w|^4-2\sum_jw_j^4 .
\]
Therefore, using $|u|^4=(a^2+|w|^2)^2=a^4+2a^2|w|^2+|w|^4$,
\[
\kappa\,|u|^4-\E\langle u,X\rangle^4
\;=\;(\kappa-\kappa_\xi)\bigl(a^2\bigr)^2
\;+\;(2\kappa-6)\sum_j\bigl(a\,w_j\bigr)^2
\;+\;(\kappa-3)\bigl(|w|^2\bigr)^2
\;+\;2\sum_j\bigl(w_j^2\bigr)^2 .
\]
Each summand is the square of a quadratic polynomial in $u$
($a^2=\langle v,u\rangle^2$, $a\,w_j=\langle v,u\rangle\langle e_j,u\rangle$,
$|w|^2=\sum_j\langle e_j,u\rangle^2$, $w_j^2=\langle e_j,u\rangle^2$), and each
coefficient is nonnegative: $\kappa-\kappa_\xi\ge0$ in both regimes, and
$2\kappa-6\ge0$, $\kappa-3\ge0$ by the hypothesis $\kappa\ge3$. So the displayed gap
is a sum of squares: $X$ is $\kappa$-certifiable with $\Sigma=I_d$, and equality holds
in the prescribed direction, $\P(\langle v,X\rangle\ge t)=\P(\xi\ge t)=\Vone(t,\kappa)$.

\emph{\textup{(iii)}: two-sided, $t\ge c(\kappa)$.} There $t^2\ge c(\kappa)^2\ge\kappa$
(the Regime II
sign facts in \cref{app:proofs}), so the binding branch of \cref{thm:twosided} applies
and equals \eqref{eq:regimeII}, which is $\Vone(t,\kappa)$ by \cref{cor:collapse}; its
certificate is the same even polynomial, so the substitution argument runs verbatim
for the event $\{|Y|\ge t\}$. The witness above attains this value too: the Regime II
law places mass $\Vone(t,\kappa)$ at the atom $t$ and its two remaining atoms inside
$(-t,t)$, so $\P(|\xi|\ge t)=\P(\xi=t)=\Vone(t,\kappa)$.
\end{proof}

\section{Worked instances: the map in explicit numbers}
\label{app:worked}

This section instantiates \cref{thm:main-map} at one rational parameter point per
closed-form regime, with every object written out. A reader can verify each instance
completely by hand: expand two polynomial identities, check three or four moment equations
in rational arithmetic, and confirm the positive semidefiniteness of matrices no larger than
$3\times3$. Every displayed number below is additionally machine-checked
(\texttt{scripts/verify\_appendix\_numbers.py}, all \textsc{pass}), and the Regime II
instance is the same one whose machine certificate is reproduced verbatim in
\cref{app:walkthrough}.

\subsection{Regime II at $(t,\kappa)=(2,3)$: bound $2/11$}
\label{app:worked-II}

\emph{Value.} $p=\dfrac{\kappa-1}{(t^2-1)^2+\kappa-1}=\dfrac{2}{9+2}=\dfrac{2}{11}$, and
$u_\ast=\dfrac{1-pt^2}{1-p}=\dfrac{1-\frac8{11}}{\frac9{11}}=\dfrac13$, so
$(t^2-u_\ast)^2=\bigl(\tfrac{11}{3}\bigr)^2=\tfrac{121}9$.

\emph{Certificate.} $q(x)=\dfrac{(x^2-\frac13)^2}{(11/3)^2}=\dfrac{(3x^2-1)^2}{121}$, i.e.
\[
\lambda_0=\tfrac1{121},\qquad \lambda_1=0,\qquad \lambda_2=-\tfrac6{121},\qquad
\lambda_4=\tfrac9{121}\;(>0,\ \text{legal for (A3)}),
\]
with no $x^3$ term, as required since $\E X^3$ is unconstrained.

\emph{Global nonnegativity} \eqref{eq:cert-glob}: on the basis $z=(1,x,x^2)$,
\[
q(x)=z^\top G\,z,\qquad
G=\frac{1}{121}\begin{pmatrix}1&0&-3\\0&0&0\\-3&0&9\end{pmatrix},
\]
and $G\succeq0$ by exact $LDL^\top$: pivots $\bigl(\tfrac1{121},\,0,\,0\bigr)$: the
second pivot is $0$ with an all-zero row (legal), and the third is the Schur complement
$\tfrac9{121}-\tfrac{(-3/121)^2}{1/121}=0$. ($G$ is the rank-one matrix
$vv^\top/121$, $v=(-1,0,3)$.)

\emph{Event inequality} \eqref{eq:cert-event}: with $y=x-2$,
\[
(q-1)\cdot\tfrac{121}9 \;=\; y\Bigl(y^3+8y^2+\tfrac{70}3\,y+\tfrac{88}3\Bigr)
\;=\;\underbrace{\Bigl(\tfrac{70}3\,y^2+y^4\Bigr)}_{\sigma_0}
+\;y\,\underbrace{\Bigl(\tfrac{88}3+8y^2\Bigr)}_{\sigma_1},
\]
an identity the reader can expand; all four coefficients are positive, so $\sigma_0,\sigma_1$
are (diagonal-Gram) sums of squares and $q\ge1$ on $[2,\infty)$.

\emph{Bound.} $\E q(X)\le\lambda_0+\lambda_2+\lambda_4\kappa
=\tfrac{1-6+27}{121}=\tfrac{22}{121}=\tfrac2{11}$ for every $X\in\CkapOf3$
(\cref{lem:weak-duality}).

\emph{Witness.} $\mu^\star=\tfrac2{11}\,\delta_2+w_+\delta_{1/\sqrt3}+w_-\delta_{-1/\sqrt3}$
with
\[
w_\pm=\frac{9\mp4\sqrt3}{22},\qquad w_+\approx0.0942,\quad w_-\approx0.7240 .
\]
Check (exact): mass $\tfrac2{11}+\tfrac9{11}=1$; mean
$\tfrac4{11}+\tfrac{1}{\sqrt3}\cdot\bigl(w_+-w_-\bigr)
=\tfrac4{11}-\tfrac{1}{\sqrt3}\cdot\tfrac{8\sqrt3}{22}=0$; variance
$\tfrac8{11}+\tfrac13\cdot\tfrac9{11}=1$; fourth moment
$\tfrac{32}{11}+\tfrac19\cdot\tfrac9{11}=3$. Tail mass: only the atom $2$ lies in
$[2,\infty)$ (as $1/\sqrt3<2$), so $\P(X\ge2)=\tfrac2{11}$. The bound is attained; note the
witness is genuinely skewed, $\E X^3=\tfrac43$, which is why it lives outside the
skewness-pinned classes of the classical literature (\cref{app:priorwork}).

\subsection{Regime I at $(t,\kappa)=(2,6)$: bound $1/5$}
\label{app:worked-I}

Here $\kc(2)=\tfrac{13}4\le6$: the tongue. Certificate
$q(x)=\dfrac{(x+\frac12)^2}{(5/2)^2}=\dfrac{(2x+1)^2}{25}$, degree $2$, $\lambda_4=0$;
support Gram $\tfrac1{25}\bigl(\begin{smallmatrix}1&2\\2&4\end{smallmatrix}\bigr)\succeq0$
(pivots $\tfrac1{25},0$); event identity
$q-1=\tfrac{4(x-2)^2+20\,(x-2)}{25}\ge0$ on $[2,\infty)$; bound
$\E q\le\tfrac{1/4+1}{25/4}=\tfrac15$. Witness: the Cantelli pair
$\tfrac15\delta_2+\tfrac45\delta_{-1/2}$, with fourth moment
$\tfrac{16}5+\tfrac45\cdot\tfrac1{16}=\tfrac{13}4\le6$: feasible precisely because the
budget covers $\kc(2)$, and $\P(X\ge2)=\tfrac15$. Note the certificate never uses the
kurtosis constraint: on the tongue, fourth-moment information is worthless, and this
instance shows the mechanism.

\subsection{Regime IIIa at $(t,\kappa)=(\tfrac12,\tfrac54)$: bound
$\tfrac12+\tfrac{\sqrt{17}}{34}$}
\label{app:worked-IIIa}

Here $u=\tfrac12$, $s=\tfrac{\sqrt{17}}2$, so $b=\tfrac{\sqrt{17}-1}4\approx0.7808$,
$c=\tfrac{\sqrt{17}+1}4\approx1.2808$ ($bc=1$), $k_2=\tfrac{\sqrt{17}-1}4$,
$\lambda_4=\tfrac{16\sqrt{17}}{289}$, and
$\tauofk=\sqrt{\tfrac12\bigl(\tfrac{\sqrt{17}}2+\tfrac12\bigr)}-b\approx0.3509\le
t=\tfrac12\le b$: the plateau window. The certificate
$q=\lambda_4[(x^2-c^2)^2+k_2(x+c)^2]$ satisfies
$q-1=\lambda_4(x-b)^2\bigl(x^2+2bx+r\bigr)$ with $r=\tfrac{7-3\sqrt{17}}8\approx-0.671$;
the quadratic factor's larger root is exactly $\tauofk<t$, so $q\ge1$ on $[t,\infty)$. The
value: $\E q\le c/s=\tfrac{17+\sqrt{17}}{34}=\tfrac12+\tfrac{\sqrt{17}}{34}\approx0.62127$.
Witness: $\mu^\star=\tfrac{17+\sqrt{17}}{34}\,\delta_{b}+\tfrac{17-\sqrt{17}}{34}\,
\delta_{-c}$, with variance $bc(b+c)/s=1$ and fourth moment $s^2-3=\tfrac54$ exactly;
its atom $b\approx0.78$ exceeds $t=\tfrac12$, so the tail mass equals the bound. Because
neither the certificate nor the witness involves $t$ (only the regime condition
$\tauofk\le t\le b$ does), the same pair certifies every threshold in the window: this is
the freezing phenomenon of \cref{thm:main-map}(IIIa) made concrete. This instance's
machine certificate carries these quadratic surds exactly, and the independent verifier's
verdict for it is \textsc{verified-tight} with bound $\tfrac12+\tfrac{\sqrt{17}}{34}$
(\cref{tab:instances}).

\section{Prior work: classes, formulas, and where the map sits}
\label{app:priorwork}

\cref{tab:priorwork} organizes the literature by the one distinction that matters for
comparing results: \emph{which moment functionals define the class}. Two bounds are
comparable as theorems only if their classes coincide; most apparent overlaps between this
paper and the classical literature dissolve on this axis. Formulas are reproduced only when
we verified them against the primary source (or, where noted, a secondary transcription);
we deliberately do not reproduce formulas we could not verify.

\begin{center}\small
\renewcommand{\arraystretch}{1.25}%
\begin{tabular}{>{\raggedright\arraybackslash}p{0.23\dimexpr\linewidth-8\tabcolsep\relax}%
                >{\raggedright\arraybackslash}p{0.25\dimexpr\linewidth-8\tabcolsep\relax}%
                >{\raggedright\arraybackslash}p{0.18\dimexpr\linewidth-8\tabcolsep\relax}%
                >{\raggedright\arraybackslash}p{0.34\dimexpr\linewidth-8\tabcolsep\relax}}
\toprule
work & class (constraints) & event / scope & answer, and sharpness in its class \\
\midrule
\citeauthor{bienayme1853}~(\citeyear{bienayme1853})--\citeauthor{chebyshev1867}~(\citeyear{chebyshev1867}) & $\E X{=}0$, $\E X^2{=}1$ & $\P(|X|\ge t)$ & $\min(1,1/t^2)$; sharp \\
\citet{cantelli1928} & $\E X{=}0$, $\E X^2{=}1$ & $\P(X\ge t)$ & $1/(1+t^2)$; sharp \\
\citet{selberg1940} & two moments & interval events & sharp two-moment refinement for
  asymmetric intervals (formula not reproduced here) \\
\citet{guttman1948} & four moments & kurtosis-type & an early fourth-moment inequality
  (primary text not accessible to us; not reproduced) \\
\citet{royden1953} & $n$ moments pinned & CDF bounds & sharp, via case analysis over
  principal representations \\
\citet{zelen1954} & \emph{all} moments to order 4 pinned ($m_3$ included) & $\P(X\ge x)$,
  multi-branch & sharp in its class where each branch applies; his eq.~(14) reproduced
  below \\
\citet{karlinstudden1966} & general T-systems & structural theory & principal
  representations: extremal measures with few atoms \\
\citet{bhattacharyya1987} & four moments known & $\P(X\ge t\sigma)$ & a closed-form bound
  later shown \emph{not sharp} by \citet{maher2019}; superseded pointwise by
  \cref{thm:main-map} \\
\citet{bertsimaspopescu2005} & any finite moment set & general events & SDP framework;
  explicit closed forms up to 3 moments \\
\citet{hezhangzhang2010} & $\E X, \E X^2, \E X^4$ pinned (no $m_3$) & small-deviation
  regime ($t$ near the mean) & tight bounds near the mean; the $t{=}0$ constant
  $2\sqrt3-3$ $=$ our endpoint (\cref{prop:endpoint}) \\
\citet{maher2019} & four moments & discrete stress tests & constructions showing
  Bhattacharyya's bound is not extremal \\
\citet{garnett2020} & $\E X{=}0$, $\E X^2{=}1$, $\E X^3\ge0$, $\E X^4\le\kappa$ &
  $\P(X\ge 0)$ & $1-1/(2\kappa)$; sharp in its sign-restricted class \\
\midrule
this paper & $\E X{=}0$, $\E X^2{=}1$,\newline $\E X^4\le\kappa$ ($m_3$ \emph{free}) &
  $\P(X\ge t)$ and $\P(|X|\ge t)$, \emph{all} $(t,\kappa)$ & four-regime complete map, sharp
  everywhere, with certificates, extremal laws, and the proof-degree phase diagram \\
\bottomrule
\end{tabular}
\captionof{table}{The moment-problem literature relevant to the map, organized by class. ``Sharp''
always means: extremal for the stated class, which differs across rows.}
\label{tab:priorwork}
\end{center}

\paragraph{Zelen's formula, for concreteness.} For the class with all four moments $m_k\coloneqq\E X^k$ pinned
($m_1=0$, $m_2=1$, $m_3$, $m_4$), Zelen's eq.~(14), transcribed from the primary source
\citep{zelen1954} and re-derived by our pipeline on the $m_3=0$ slice, bounds the upper
tail by
\[
\P(X\ge x)\;\le\;\frac{m_4-m_3^2-1}{(1+x^2)(m_4-m_3^2-1)+(x^2-m_3x-1)^2},
\]
valid where his auxiliary quadratic $g(x)=x^2-m_3x-1$ is positive (for $m_3=0$: $x>1$).
Setting $m_3=0$ and comparing with Regime II of \cref{thm:main-map} at $(t,\kappa)=(2,3)$:
the $\gamma$-pinned class gives $2/19$, the skewness-free class $\Ckap$ gives $2/11$. Both
are sharp, for different classes; the ratio quantifies exactly what knowing
$\E X^3=0$ is worth at that point. Our extremal law for Regime II has
$\E X^3=pt(t^2-u_\ast)\ne0$ (e.g.\ $\tfrac43$ at $(2,3)$; \cref{app:worked-II}), which is
the structural reason the two answers must differ.

\paragraph{Adjacent literatures.} Four neighboring lines contextualize the table.
\emph{(i) Generalized Chebyshev bounds by optimization.} The duality behind
\cref{lem:weak-duality} is classical \citep{isii1962}; \citet{smith1995} developed
generalized Chebyshev inequalities for decision analysis, \citet{popescu2005} optimal
moment bounds for convex classes, and \citet{vandenberghe2007} sharp multivariate
Chebyshev bounds by semidefinite programming. These frameworks \emph{compute} sharp
bounds numerically in far richer settings than ours; what they do not provide, and
what this paper provides for one natural class, is the closed-form regime geometry,
the extremal laws, and exact certificates. \emph{(ii) Distributionally robust
optimization} builds decision problems over moment ambiguity sets
\citep{delageye2010,wiesemann2014}; $\Vone(t,\kappa)$ is exactly the worst-case chance
constraint of the univariate fourth-moment ambiguity set, so the map hands DRO's inner
problem a closed form in that instance. \emph{(iii) Heavy-tailed statistics.}
Finite-kurtosis assumptions power Catoni-style mean estimators \citep{catoni2012} and
run through the median-of-means literature \citep{lugosimendelson2019}; our constants
quantify what that assumption buys for tail control by itself, before any estimator
enters. \emph{(iv) Sum-of-squares proof complexity.} The surveys of \citet{barak2014}
and \citet{fleming2019} treat certificate degree as a computational resource in the
multivariate world, where hierarchies are generally inexact; the proof-degree map of
\cref{thm:proofdegree} is a univariate, exactly solvable pilot of that question for
probability inequalities, and the exact-certification line of \citet{kaltofen2012}
parallels our rounding-and-verification layer. Finally, the classical monographs
\citep{karlinstudden1966,kreinnudelman1977} contain the Tchebycheff-system theory that
explains, structurally, why every extremal law in this paper has at most three atoms.

\paragraph{What is new here, stated carefully.} Against this table, the contributions of
the present paper are: (i) the complete $(t,\kappa)$ map for the \emph{skewness-free}
bounded-kurtosis class, in closed form on three regimes; prior work either pins the skewness or
its sign (Zelen, Royden, Bhattacharyya's setting; \citealp{garnett2020}), or treats the
small-deviation regime (He--Zhang--Zhang) --- the strongest interior overlap with the map;
our verified point of formula-level contact with the latter is the $t{=}0$ constant; any
tight value at small $t>0$ is necessarily the corresponding root of system (S)
(\cref{sec:endpoint}), and no single nested-square-root formula can cover the regime's
interior (\cref{app:minpoly});
(ii) the one-sided/two-sided collapse on $t\ge c(\kappa)$, which we did not find stated
anywhere; (iii) the proof-degree phase diagram; (iv) the machine-verified certificate
layer; (v) the applications of \cref{sec:consequences}: we found no four-moment,
skewness-free closed-form quantile inversion in the worst-case value-at-risk literature
(Zelen's inversions pin the skewness; \citealp{hezhangzhang2010} treat small deviations;
the conic worst-case-VaR line of work stops at two moments), no kurtosis-refined
median-of-means block constant, and no exact directional tail constant under certifiable
kurtosis, where the standard conversion from certificate to tail is the Markov step
$\kappa/t^4$. Each novelty statement is hedged in the text as ``to our knowledge''; this
table is the audit trail for those hedges.

\section{The pipeline, the certificates, and reproduction}
\label{app:pipeline}

\subsection{Certificate format and the independent verifier}

A certificate is a JSON object carrying: the moment constraints (polynomial, relation,
value), the dual multipliers $\lambda$, the polynomial $q$, the claimed bound, and, per
piece (event / support / off-event), a list of SOS blocks --- each a multiplier from the
piece's allowed set ($1$ or $x-t$ on $[t,\infty)$; $1$, $x-a$, $b-x$, $(x-a)(b-x)$ on
$[a,b]$; $1$ on $\R$), a polynomial basis, and a Gram matrix with exact entries --- plus an
optional extremal witness (atoms, weights). All numbers are exact: rationals or symbolic
algebraic numbers (e.g.\ $\sqrt{17}$ on the IIIa grid); floats anywhere cause rejection.

The verifier (\texttt{forge/verify.py}, $\sim$370 lines, sympy only) re-derives everything:
V1 exact parsing; V2 recompute $q=\sum_i\lambda_ig_i$; V3 the piece list matches the
objective's majorant logic; V4 each piece's polynomial identity
$\text{target}=\sum_j\text{mult}_j\cdot z_j^\top G_jz_j$ holds coefficient-by-coefficient;
V5 each $G_j\succeq0$ by exact $LDL^\top$ (zero pivot $\Rightarrow$ zero row) \emph{and} by
the characteristic-polynomial criterion (all coefficients of $\det(G+sI)$ nonnegative), at
least one of which must pass; V6 multiplier signs against constraint directions; V7 the
bound equals $\sum_i\lambda_ic_i$; V8 the witness is feasible (exact moment arithmetic) and
attains the bound. Verdicts: \textsc{verified-bound} or \textsc{verified-tight}. The
checker rejects eight classes of mutated certificates (test suite), and grants
\textsc{certified} status only when run in a fresh process.

\subsection{The two exact positive-semidefiniteness criteria, proved}
\label{app:psd}

The verifier's check V5 decides $G\succeq0$ for a symmetric matrix with entries in $\Q$
(or a real quadratic extension $\Q(\sqrt{d})$) by two independent criteria; for
completeness we prove both. Both proofs are valid over any ordered field, which is exactly
why the checks run in exact arithmetic with no numerical tolerance.

\begin{lemma}[Pivot criterion]\label{lem:ldlt}
Let $G$ be symmetric $n\times n$. Perform Gaussian elimination without pivoting, treating
the diagonal entries in order: if a pivot is negative, stop; if a pivot is zero, require
its entire row and column to be zero and skip it; otherwise eliminate below it. Then
$G\succeq0$ if and only if the elimination completes, i.e.\ every pivot encountered is
$\ge0$ and every zero pivot has an all-zero row.
\end{lemma}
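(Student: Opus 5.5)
The natural route is induction on $n$, processing one pivot at a time. Each step is a congruence $G\mapsto E^\top G E$ by an invertible elementary matrix, and positive semidefiniteness is invariant under congruence. Concretely, I would compare $G\succeq0$ with the condition on the trailing $(n-1)\times(n-1)$ matrix left after the first pivot is handled, and show the two are equivalent. Only the ordered-field axioms enter: signs of squares and sums, and the identity $v^\top G v$ for vectors over the field. The statement therefore holds verbatim over $\Q$ or $\Q(\sqrt d)$.

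Write $G=\begin{pmatrix} g & h^\top\\ h & H\end{pmatrix}$ with $g=G_{11}$. There are three cases.

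\emph{Negative pivot} ($g<0$). Then $e_1^\top G e_1=g<0$, so $G\not\succeq0$, and stopping is correct.

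\emph{Zero pivot} ($g=0$). If $h\neq0$, choose an index $j$ with $h_j\neq0$ and test on $v=\alpha e_1+e_{j+1}$. This gives $v^\top Gv=2\alpha h_j+H_{jj}$, which becomes negative for a suitable $\alpha$ in the field. Hence a zero pivot with a nonzero row forces $G\not\succeq0$. If $h=0$, then $v^\top G v=w^\top H w$ where $w$ drops the first coordinate. So $G\succeq0$ if and only if $H\succeq0$, and skipping the pivot is correct.

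\emph{Positive pivot} ($g>0$). Use the Schur complement. With $E=\begin{pmatrix}1&-g^{-1}h^\top\\0&I\end{pmatrix}$ we get $E^\top G E=\operatorname{diag}(g,\,H-g^{-1}hh^\top)$. This block-diagonal form is exactly what "eliminating below the pivot" produces. Since $E$ is invertible and $g>0$, $G\succeq0$ if and only if $H-g^{-1}hh^\top\succeq0$. To see this directly, complete the square:
\[
v^\top Gv=g\bigl(v_1+g^{-1}h^\top w\bigr)^2+w^\top\bigl(H-g^{-1}hh^\top\bigr)w .
\]
Both directions follow by choosing $v_1$ to kill the first term.

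Induction on $n$ then closes the argument, with $n=0$ or $n=1$ trivial. The algorithm's successive steps act exactly on these reduced matrices, so "completes" is equivalent to $G\succeq0$.

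The only delicate point is the zero-pivot case. There I must exhibit the negative direction without taking square roots or limits, so that the argument stays inside the ordered field. The explicit choice of $\alpha$ handles this. If $H_{jj}\ge0$, take $\alpha=-(H_{jj}+1)/(2h_j)$, which gives $v^\top Gv=-1$. If $H_{jj}<0$, we already have a negative diagonal entry, which settles it.
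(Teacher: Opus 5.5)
Your proof is correct and follows the paper's argument. Both use induction on $n$ with the same three cases: the $e_1$ test for a negative pivot, and the Schur-complement congruence for a positive one. The only local difference is the zero-pivot, nonzero-row case, where you exhibit an explicit test vector $\alpha e_1+e_{j+1}$ with value $-1$ instead of citing the negative $2\times2$ principal minor $-\beta^2$. Your version is slightly more self-contained, and your choice of $\alpha$ works whatever the sign of $H_{jj}$, so that final case split is unnecessary.
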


\begin{proof}
Induction on $n$. If $g_{11}<0$ then $e_1^\top Ge_1<0$, so $G$ is not PSD; and if the
elimination completes, no such pivot occurred. If $g_{11}=0$ and some $g_{1j}\neq0$, the
principal $2\times2$ submatrix on rows and columns $\{1,j\}$ is
$\begin{pmatrix}0&\beta\\ \beta&\delta\end{pmatrix}$ with $\beta\neq0$, whose determinant
$-\beta^2<0$ shows $G$ is not PSD; conversely a PSD matrix with $g_{11}=0$ has a zero
first row and column, the elimination skips it, and $G\succeq0$ iff the remaining
principal block is. If $g_{11}>0$, the congruence
$G=L\begin{pmatrix}g_{11}&0\\ 0&S\end{pmatrix}L^\top$, with $S$ the Schur complement and
$L$ unit lower triangular, shows $G\succeq0\iff S\succeq0$, and one elimination step
replaces $G$ by $S$.
\end{proof}

\begin{lemma}[Coefficient criterion]\label{lem:charpoly}
Let $G$ be symmetric $n\times n$. Then $G\succeq0$ if and only if every coefficient of
the polynomial $s\mapsto\det(G+sI)$ is $\ge0$.
\end{lemma}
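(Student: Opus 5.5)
The plan is to relate $\det(G+sI)$ to the eigenvalues of $G$ and read off sign conditions. By \cref{item:spec}, $G$ has real eigenvalues $\lambda_1,\dots,\lambda_n$, and over the reals it is orthogonally diagonalizable. Hence
\[
\det(G+sI)\;=\;\prod_{i=1}^n(s+\lambda_i)\;=\;\sum_{k=0}^n e_{n-k}(\lambda_1,\dots,\lambda_n)\,s^k ,
\]
where $e_j$ denotes the $j$-th elementary symmetric polynomial. The coefficients of $\det(G+sI)$ are therefore exactly the elementary symmetric functions of the spectrum. Equivalently, they are the sums of principal minors of fixed size, and this second description is what the verifier actually computes in exact arithmetic.

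For the forward direction, assume $G\succeq0$. Then every $\lambda_i\ge0$, so each $e_j$ is a sum of products of nonnegative reals. All coefficients are therefore $\ge0$.

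For the converse, assume every coefficient is nonnegative. The polynomial $f(s)=\det(G+sI)$ is monic, and all its coefficients are $\ge0$, so for every $s>0$ we get $f(s)\ge s^n>0$. Thus $f$ has no positive real root. But the roots of $f$ are exactly $-\lambda_i$, and these are real. So $-\lambda_i\le0$ for every $i$, which gives $\lambda_i\ge0$ and hence $G\succeq0$.

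The one step needing care is the passage from eigenvalues to positive semidefiniteness over an ordered field such as $\Q(\sqrt d)$, as opposed to over $\R$. I would handle this simply by embedding the field in $\R$: for a quadratic extension, choose the real embedding corresponding to the ordering. Positive semidefiniteness and the signs of coefficients are then the same statements in $\R$. I expect this embedding point to be the only real obstacle; the spectral argument itself is routine.
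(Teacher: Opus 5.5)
Your proof is correct and is essentially the paper's argument: factor $\det(G+sI)=\prod_i(s+\lambda_i)$ over the real spectrum, read off nonnegative coefficients when all $\lambda_i\ge0$, and for the converse use that a monic polynomial with nonnegative coefficients is $\ge s^n>0$ for $s>0$, so no $-\lambda_j$ can be a positive root. You are slightly more careful than the paper in two places: you explicitly invoke orthogonal diagonalizability, which the step $\lambda_i\ge0\Rightarrow G\succeq0$ needs and the paper leaves implicit, and you use a real embedding to handle entries in $\Q(\sqrt d)$, where the paper simply asserts that the argument is valid over ordered fields.
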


\begin{proof}
$G$ is symmetric, so $\det(G+sI)=\prod_{i=1}^n(\lambda_i+s)$ with all $\lambda_i$ real
(\cref{item:spec} of \cref{app:register}).
If every $\lambda_i\ge0$, expanding the product gives nonnegative coefficients.
Conversely, suppose all coefficients are $\ge0$; the polynomial is monic, so it is
$\ge s^n>0$ for every $s>0$. If some $\lambda_j<0$, then $s=-\lambda_j>0$ would be a
root, a contradiction.
\end{proof}

The verifier requires at least one of the two to pass (they are computed by different
code paths); in practice both pass, and the pivot sequences are reported in the
transcript (\cref{app:walkthrough}).

\subsection{$\varepsilon$-retreat}

When the numerical optimum has singular Gram matrices (boundary optima are the norm here:
tight certificates have double roots), rational rounding can fail. The retreat re-solves the
SDP with the objective pinned to $\text{value}+\varepsilon$
($\varepsilon\in\{10^{-9},10^{-6}\}$ rational), which restores a strict interior; the
rounded certificate then proves the explicitly weaker bound, and is labeled as such. This
never touches the closed-form regimes (their certificates are constructed symbolically) and
was needed only on the rounding route.

\subsection{Certified instances}
\label{app:certified}

Of the $48$ kurtosis-cell grid instances, $47$ are \textsc{certified} ($45$ of them
\textsc{verified-tight} with exact witnesses; the two IIIb instances are bound-certificates
with matching numerical witnesses); the one failure, $(t,\kappa)=(\tfrac12,\tfrac32)$, sits
exactly on $\kappa=\ks$ and is recorded as a rounding failure. The benchmark cells are
$22/23$ certified ($4$ Markov, $4$ Cantelli, $6$ tight Paley--Zygmund, $8/9$ three-moment);
the skewness-pinned surface adds $47/60$. Full tables: \texttt{results/constants.csv} in the
artifact repository; every \textsc{certified} row names its certificate file, solver,
residuals, and git revision.

\subsection{Reproduction}

\begin{verbatim}
pip install numpy scipy sympy mpmath cvxpy clarabel pyyaml pytest matplotlib
python -m pytest tests/ -q                  # known-answer + mutation tests
python scripts/verify_theorem_symbolic.py   # all 52 symbolic identities
python scripts/produce_results.py           # full grid: solve, round, verify
                                            #   (fresh processes), tables
python scripts/run_b5.py                    # Khintchine POP + recognition
python scripts/verify_appendix_numbers.py   # every number printed in the
                                            #   worked-instances appendix
python -m forge.verify results/certificates/kurtosis_tail__kappa3_t2.cert.json
\end{verbatim}
The last command independently re-verifies the Regime II instance
$\P(X\ge2)\le\tfrac2{11}$ over $\CkapOf3$ (with its tightness witness) in a fresh process.

\paragraph{Index of symbolic-check tags.} Throughout the deferred proofs, every asserted
polynomial identity carries a bracketed tag naming the corresponding check in
\texttt{scripts/verify\_theorem\_symbolic.py}; the script prints one \textsc{pass}/\textsc{fail}
line per tag ($52$ checks, all \textsc{pass} at the artifact snapshot).
\Cref{tab:tagindex} maps the script's output back to the paper, one row per tag prefix.

\begin{table}[t]
\centering\small
\renewcommand{\arraystretch}{1.15}%
\begin{tabular}{>{\raggedright\arraybackslash}p{0.13\dimexpr\linewidth-8\tabcolsep\relax}%
                >{\raggedright\arraybackslash}p{0.06\dimexpr\linewidth-8\tabcolsep\relax}%
                >{\raggedright\arraybackslash}p{0.46\dimexpr\linewidth-8\tabcolsep\relax}%
                >{\raggedright\arraybackslash}p{0.27\dimexpr\linewidth-8\tabcolsep\relax}}
\toprule
tag prefix & \# & what the checks certify & where the tags appear \\
\midrule
\texttt{I:} & 6 & Regime I certificate: $q-1$ factorization, ML pieces, value,
  witness moments & the Regime I part of the proof of the map (\cref{app:proofs}) \\
\texttt{II:} & 14 & Regime II: factorization, value, witness moments, and the sign/root
  auxiliaries of the case analysis & the Regime II part \\
\texttt{IIIa:} & 12 & plateau: certificate parameters ($bc=1$, $k_2$), factorization,
  value, witness, window algebra ($\tau\le b\iff\kappa\le\ks$) & the Regime IIIa part \\
\texttt{0:} & 8 & the $t=0$ endpoint certificate and witness & the endpoint proof
  (\cref{app:endpointpf}) \\
(gluing) & 4 & boundary agreement: II${}={}$I at $t=c(\kappa)$, IIIa${}={}$I at
  $t=b(\kappa)$, $\kc(b)=\kc(c)=\kappa$ & the coverage-and-gluing paragraph of the
  map's proof (\cref{app:proofmap}) \\
\texttt{T2-*:} & 8 & two-sided certificates: the Chebyshev regime and the binding regime
  of \cref{thm:twosided} & the two-sided proof \\
\bottomrule
\end{tabular}
\caption{The $52$ checks of \texttt{scripts/verify\_theorem\_symbolic.py},
grouped by tag prefix. Each check re-verifies in exact arithmetic an identity asserted,
with its tag in brackets, in the corresponding proof. The battery is redundant assurance,
not a dependency (\cref{app:register}).}
\label{tab:tagindex}
\end{table}

\subsection{A certificate, verbatim, and what the verifier does with it}
\label{app:walkthrough}

To make ``machine-verified'' concrete, here is the complete certificate artifact for the
Regime II instance $(t,\kappa)=(2,3)$ of \cref{app:worked-II} (the actual JSON file
shipped in the repository, unedited), followed by the independent verifier's output on
it. The reader can match every field against the hand calculation of
\cref{app:worked-II}.

The format: \texttt{constraints} lists the moment functionals $g_i$, relations, and values
$c_i$; \texttt{dual\_multipliers} are the $\lambda_i$ (exact rationals; the redundant
\texttt{q\_poly} is cross-checked against $\sum_i\lambda_ig_i$); each entry of
\texttt{pieces} carries a canonical domain, and per block a \texttt{multiplier} drawn from
that domain's allowed generators, a polynomial \texttt{basis} $z$, and an exact
\texttt{gram} matrix $G$, asserting the identity
$\text{target}=\sum_j\text{mult}_j\cdot z_j^\top G_jz_j$ with target $q-1$ on the event
piece and $q$ on the support piece; \texttt{extremal} is the tightness witness;
\texttt{provenance} records solver, tolerances, wall time, and git revision (the
\texttt{status} field is written by the pipeline only after the fresh-process verification
passes, and is itself re-checked, not trusted).

\begin{scriptsize}
\begin{verbatim}
{
 "lemma_id": "kurtosis_tail",
 "params": {
  "t": "2",
  "kappa": "3"
 },
 "objective": {
  "sense": "sup",
  "type": "probability"
 },
 "constraints": [
  {
   "poly": "1",
   "op": "==",
   "value": "1"
  },
  {
   "poly": "x",
   "op": "==",
   "value": "0"
  },
  {
   "poly": "x**2",
   "op": "==",
   "value": "1"
  },
  {
   "poly": "x**4",
   "op": "<=",
   "value": "3"
  }
 ],
 "provenance": {
  "source": "closed form (claims L01/L02/L03, regime II)",
  "date": "2026-07-02"
 },
 "status": "CERTIFIED",
 "claim": "P(x >= 2) <= 2/11 over C(3) [Regime II]",
 "dual_multipliers": [
  "1/121",
  "0",
  "-6/121",
  "9/121"
 ],
 "q_poly": "9*x**4/121 - 6*x**2/121 + 1/121",
 "bound_value": "2/11",
 "pieces": [
  {
   "role": "event",
   "domain": {
    "type": "half_line_ge",
    "a": "2"
   },
   "blocks": [
    {
     "multiplier": "1",
     "basis": [
      "x - 2",
      "x**2 - 4*x + 4"
     ],
     "gram": [
      [
       "210/121",
       "0"
      ],
      [
       "0",
       "9/121"
      ]
     ]
    },
    {
     "multiplier": "x - 2",
     "basis": [
      "1",
      "x - 2"
     ],
     "gram": [
      [
       "24/11",
       "0"
      ],
      [
       "0",
       "72/121"
      ]
     ]
    }
   ]
  },
  {
   "role": "support",
   "domain": {
    "type": "real_line"
   },
   "blocks": [
    {
     "multiplier": "1",
     "basis": [
      "1",
      "x",
      "x^2"
     ],
     "gram": [
      [
       "1/121",
       "0",
       "-3/121"
      ],
      [
       "0",
       "0",
       "0"
      ],
      [
       "-3/121",
       "0",
       "9/121"
      ]
     ]
    }
   ]
  }
 ],
 "extremal": {
  "atoms": [
   "2",
   "sqrt(3)/3",
   "-sqrt(3)/3"
  ],
  "weights": [
   "2/11",
   "9/22 - 2*sqrt(3)/11",
   "2*sqrt(3)/11 + 9/22"
  ]
 }
}
\end{verbatim}
\end{scriptsize}

\noindent The independent verifier (\texttt{python -m forge.verify <file>}) re-derives
everything from this file alone and prints:

\begin{scriptsize}
\begin{verbatim}
[ok] V1-objective: sup of probability
  [ok] V1-constraints: 4 moment constraints incl. normalization
  [ok] V2-q: q = 9*x**4/121 - 6*x**2/121 + 1/121
  [ok] V3-pieces: roles ['event', 'support']
  [ok] V5-psd[event#0]: ldlt: LDLT pivots [210/121, 9/121] | charpoly: charpoly coefficients all >= 0
  [ok] V5-psd[event#1]: ldlt: LDLT pivots [24/11, 72/121] | charpoly: charpoly coefficients all >= 0
  [ok] V4-identity[event]: exact
  [ok] V5-psd[support#0]: ldlt: LDLT pivots [1/121, 0, 0] | charpoly: charpoly coefficients all >= 0
  [ok] V4-identity[support]: exact
  [ok] V6-signs: 
  [ok] V7-bound: bound = 2/11 (upper)
  [ok] V8-extremal: feasible witness attains bound exactly (3 atoms)
PASS VERIFIED-TIGHT: P(x >= 2) <= 2/11 over C(3) [Regime II]  bound=2/11
\end{verbatim}
\end{scriptsize}

\noindent Note the two zero pivots in the support Gram's $LDL^\top$ (the rank-one boundary
structure computed by hand in \cref{app:worked-II}) handled by the zero-pivot/zero-row
rule, and the final verdict \textsc{verified-tight}: the witness was re-checked feasible
(exact moment arithmetic, including the quadratic surds) and its tail mass re-computed to
equal the bound $2/11$ exactly.

\subsection{Computational provenance, cost, and the anatomy of the one failure}
\label{app:provenance}

\begin{center}\small
\begin{tabular}{lrrr}
\toprule
cell & certs emitted & median solve (s) & max solve (s) \\
\midrule
cantelli & 4 & 0.000 & 0.000 \\
kurtosis\_tail & 48 & 0.073 & 0.088 \\
markov & 4 & 0.000 & 0.000 \\
paley\_zygmund & 6 & 0.000 & 0.000 \\
skew\_kurtosis\_tail & 60 & 0.067 & 0.140 \\
three\_moment\_tail & 9 & 0.057 & 0.172 \\
\bottomrule
\end{tabular}
\captionof{table}{Per-cell \emph{emitted}-certificate counts --- including the ones later
rejected by the verifier; certified counts are in \cref{app:certified} --- and Clarabel solve times (the exact rounding,
projection, and fresh-process verification dominate the wall clock; see text).
Environment: Python 3.13.14, sympy 1.14.0, cvxpy 1.9.2, Clarabel 0.11.1, numpy 2.4.6, scipy 1.17.1, mpmath 1.3.0; Windows 10, single core; solver tolerances
\texttt{tol\_gap\_abs} $=$ \texttt{tol\_gap\_rel} $= 10^{-11}$,
\texttt{tol\_feas} $=10^{-11}$ (SCS fallback: \texttt{eps} $=10^{-9}$).}
\label{tab:provenance}
\end{center}

\paragraph{Where the time goes.} The semidefinite programs are tiny (Gram blocks of size
$\le3$; a dual with $\le5$ multipliers) and Clarabel solves each in milliseconds
(\cref{tab:provenance}); the wall clock is dominated by exact arithmetic: the rational
projection step of the rounding route, the symbolic PSD checks on quadratic-surd Gram
matrices (IIIa), and one fresh Python/sympy process per verification ($\sim$1\,s of
interpreter startup each, by design: fresh-process verification is a trust boundary, not
an optimization target). The full production run (131 instances: solve, cross-check,
round, verify, tables) completes in tens of minutes on a single desktop core.

\paragraph{Anatomy of the failed instance.} The single kurtosis-cell failure,
$(t,\kappa)=(\tfrac12,\tfrac32)$, sits exactly on $\kappa=\ks$, where the plateau window
degenerates to the point $\tau=b=1/\sqrt2$ and the optimal Gram matrices are doubly
singular. The rounding route's $\varepsilon$-retreat did emit a candidate certificate,
but with one coefficient corrupted: the exact-recognition step, applied to a float at this
degenerate optimum, produced a wild algebraic number (a product of fractional prime powers)
instead of a rational, breaking the polynomial identity. The independent verifier rejected
it at its identity check (V4), printing the nonzero residual. We keep this failure in the
tables rather than patching around it because it is the architecture working as designed:
a \emph{producer} bug, mis-recognition under degeneracy, was isolated by a
\emph{checker} that shares none of the producer's code path, which is the entire argument
for having an independent verifier. (The instance's \emph{value} is not in doubt: it
agrees with the independent LP to $3\cdot10^{-8}$ and lies on the closure of both adjacent
regimes' formulas; only its certificate is missing.)

\section{Certified-instance tables}
\label{app:tables}

\cref{tab:instances} lists every kurtosis-cell grid instance with its exact value and the
independent verifier's verdict; the machine-readable original
(\texttt{results/constants.csv}), the certificate files, and the extremal-distribution
gallery accompany the paper. Certification and value-validation are separate axes
(\cref{tab:benchmarks}): the failed instances' \emph{values} still agree with the
independent LP; what they lack is an exact certificate.

{\small
\renewcommand{\arraystretch}{1.12}
\begin{table}[!htb]
\centering
\caption{All 48 kurtosis-cell grid instances, grouped by $\kappa$ (shown once per block): regime, exact value,
float value, and independent-verifier verdict. \textsc{tight} = certificate and
extremal witness both verified (\textsc{verified-tight}); \textsc{bound}($\varepsilon$)
= rational $\varepsilon$-retreat certificate of value${}+\varepsilon$,
$\varepsilon\le10^{-6}$ (Regime IIIb; exact witness value from system (S) matches the
float below to solver precision); \emph{fail} = rounding failure, not certified.
Exact values: I $=1/(1+t^2)$; II $=(\kappa-1)/((t^2-1)^2+\kappa-1)$;
IIIa $=\frac12(1+\sqrt{(\kappa-1)/(\kappa+3)})$; IIIb = root of (S).}
\label{tab:instances}
\begin{minipage}[t]{0.48\linewidth}
\centering
\resizebox{\linewidth}{!}{%
\begin{tabular}{ccllrl}
\toprule
$\kappa$ & $t$ & regime & $V_1$ (exact) & $V_1$ (float) & verdict \\
\midrule
$\frac{5}{4}$ & $\frac{1}{2}$ & IIIa & $\frac{\sqrt{17}}{34} + \frac{1}{2}$ & 0.621267813 & \textsc{tight} \\
 & $\frac{4}{5}$ & I & $\frac{25}{41}$ & 0.609756098 & \textsc{tight} \\
 & $1$ & I & $\frac{1}{2}$ & 0.500000000 & \textsc{tight} \\
 & $\frac{6}{5}$ & I & $\frac{25}{61}$ & 0.409836066 & \textsc{tight} \\
 & $\frac{3}{2}$ & II & $\frac{4}{29}$ & 0.137931034 & \textsc{tight} \\
 & $2$ & II & $\frac{1}{37}$ & 0.027027027 & \textsc{tight} \\
 & $3$ & II & $\frac{1}{257}$ & 0.003891051 & \textsc{tight} \\
 & $4$ & II & $\frac{1}{901}$ & 0.001109878 & \textsc{tight} \\
\addlinespace[4pt]
$\frac{3}{2}$ & $\frac{1}{2}$ & IIIb & root of (S) & 0.667242152 & \emph{fail} \\
 & $\frac{4}{5}$ & I & $\frac{25}{41}$ & 0.609756098 & \textsc{tight} \\
 & $1$ & I & $\frac{1}{2}$ & 0.500000000 & \textsc{tight} \\
 & $\frac{6}{5}$ & I & $\frac{25}{61}$ & 0.409836066 & \textsc{tight} \\
 & $\frac{3}{2}$ & II & $\frac{8}{33}$ & 0.242424242 & \textsc{tight} \\
 & $2$ & II & $\frac{1}{19}$ & 0.052631579 & \textsc{tight} \\
 & $3$ & II & $\frac{1}{129}$ & 0.007751938 & \textsc{tight} \\
 & $4$ & II & $\frac{1}{451}$ & 0.002217295 & \textsc{tight} \\
\addlinespace[4pt]
$2$ & $\frac{1}{2}$ & IIIb & root of (S) & 0.726255879 & \textsc{bound}($\varepsilon$) \\
 & $\frac{4}{5}$ & I & $\frac{25}{41}$ & 0.609756098 & \textsc{tight} \\
 & $1$ & I & $\frac{1}{2}$ & 0.500000000 & \textsc{tight} \\
 & $\frac{6}{5}$ & I & $\frac{25}{61}$ & 0.409836066 & \textsc{tight} \\
 & $\frac{3}{2}$ & I & $\frac{4}{13}$ & 0.307692308 & \textsc{tight} \\
 & $2$ & II & $\frac{1}{10}$ & 0.100000000 & \textsc{tight} \\
 & $3$ & II & $\frac{1}{65}$ & 0.015384615 & \textsc{tight} \\
 & $4$ & II & $\frac{1}{226}$ & 0.004424779 & \textsc{tight} \\
\bottomrule
\end{tabular}}
\end{minipage}%
\hfill
\begin{minipage}[t]{0.48\linewidth}
\centering
\resizebox{\linewidth}{!}{%
\begin{tabular}{ccllrl}
\toprule
$\kappa$ & $t$ & regime & $V_1$ (exact) & $V_1$ (float) & verdict \\
\midrule
$3$ & $\frac{1}{2}$ & IIIb & root of (S) & 0.789626515 & \textsc{bound}($\varepsilon$) \\
 & $\frac{4}{5}$ & I & $\frac{25}{41}$ & 0.609756098 & \textsc{tight} \\
 & $1$ & I & $\frac{1}{2}$ & 0.500000000 & \textsc{tight} \\
 & $\frac{6}{5}$ & I & $\frac{25}{61}$ & 0.409836066 & \textsc{tight} \\
 & $\frac{3}{2}$ & I & $\frac{4}{13}$ & 0.307692308 & \textsc{tight} \\
 & $2$ & II & $\frac{2}{11}$ & 0.181818182 & \textsc{tight} \\
 & $3$ & II & $\frac{1}{33}$ & 0.030303030 & \textsc{tight} \\
 & $4$ & II & $\frac{2}{227}$ & 0.008810573 & \textsc{tight} \\
\addlinespace[4pt]
$6$ & $\frac{1}{2}$ & I & $\frac{4}{5}$ & 0.800000000 & \textsc{tight} \\
 & $\frac{4}{5}$ & I & $\frac{25}{41}$ & 0.609756098 & \textsc{tight} \\
 & $1$ & I & $\frac{1}{2}$ & 0.500000000 & \textsc{tight} \\
 & $\frac{6}{5}$ & I & $\frac{25}{61}$ & 0.409836066 & \textsc{tight} \\
 & $\frac{3}{2}$ & I & $\frac{4}{13}$ & 0.307692308 & \textsc{tight} \\
 & $2$ & I & $\frac{1}{5}$ & 0.200000000 & \textsc{tight} \\
 & $3$ & II & $\frac{5}{69}$ & 0.072463768 & \textsc{tight} \\
 & $4$ & II & $\frac{1}{46}$ & 0.021739130 & \textsc{tight} \\
\addlinespace[4pt]
$9$ & $\frac{1}{2}$ & I & $\frac{4}{5}$ & 0.800000000 & \textsc{tight} \\
 & $\frac{4}{5}$ & I & $\frac{25}{41}$ & 0.609756098 & \textsc{tight} \\
 & $1$ & I & $\frac{1}{2}$ & 0.500000000 & \textsc{tight} \\
 & $\frac{6}{5}$ & I & $\frac{25}{61}$ & 0.409836066 & \textsc{tight} \\
 & $\frac{3}{2}$ & I & $\frac{4}{13}$ & 0.307692308 & \textsc{tight} \\
 & $2$ & I & $\frac{1}{5}$ & 0.200000000 & \textsc{tight} \\
 & $3$ & I & $\frac{1}{10}$ & 0.100000000 & \textsc{tight} \\
 & $4$ & II & $\frac{8}{233}$ & 0.034334764 & \textsc{tight} \\
\bottomrule
\end{tabular}}
\end{minipage}
\end{table}
}

\noindent The 13 uncertified skewness-pinned instances, as
$(\gamma,\kappa,t)$ triples (all rounding failures on numerically degenerate
optima, with LP-agreement of the value intact): $(-1,\,6,\,1)$, $(-1,\,6,\,2)$, $(- \frac{1}{2},\,2,\,1)$, $(- \frac{1}{2},\,2,\,3)$, $(- \frac{1}{2},\,3,\,1)$, $(- \frac{1}{2},\,6,\,2)$, $(0,\,2,\,\frac{3}{2})$, $(0,\,2,\,3)$, $(\frac{1}{2},\,2,\,1)$, $(\frac{1}{2},\,3,\,2)$, $(1,\,2,\,1)$, $(1,\,3,\,\frac{3}{2})$, $(1,\,6,\,3)$.

\end{document}